\documentclass{tac}
\usepackage{amsmath}
\usepackage{amssymb}
\usepackage[legacycolonsymbols]{mathtools}
\usepackage[utf8]{inputenc}
\usepackage[T1]{fontenc}
\usepackage{graphicx}
\usepackage{xcolor}
\usepackage{tikz}
\usetikzlibrary{arrows}
\usepackage{longtable}
\usetikzlibrary{calc}
\usetikzlibrary{arrows.meta}
\usepackage[toc,page]{appendix}

\usepackage{enumitem}
% Prepared using tac.cls and diagxy (if you do not have diagxy, compiling this will require commenting lines 165-168)
% PLEASE READ comments in the text below
% PLEASE NOTE: source files for submission to TAC require a comment like the following
%              giving style, packages used, TeX implementation
% TAC style, 2 pp, Xy-pic ver 3.7, MikTeX version 3.1

\usepackage[normalem]{ulem} % [normalem] keeps \emph working as italics

\newcommand\reddout{\bgroup
  \markoverwith{\lower-0.2ex\hbox{\kern-.03em
  % The critical modification: adding \textcolor{red} around the hrule box
  \textcolor{red}{\vbox{\hrule width.2em\kern0.45ex\hrule}}%
  \kern-.03em}}%
  \ULon}
\makeatother

% Command for New Text (Red)
\newcommand{\newtext}[1]{\textcolor{red}{#1}}
% Command for Old Text (Double Strikethrough)
\newcommand{\oldtext}[1]{\reddout{#1}}
% Macro for Replacement: Old Text becomes New Text
\newcommand{\modefy}[2]{\oldtext{#1} \newtext{#2}}

% NOTE: packages used should be the first part of the preamble

\usepackage{xy}
% diagxy loads xy, so the line preceding is redundant

\input diagxy

\def\xypic{\hbox{\rm\Xy-pic}}

% The TAC hyperref setup should be reloaded after other packages

\usepackage[colorlinks=true]{hyperref}
\hypersetup{allcolors=[rgb]{0.1,0.1,0.4}}

\usepackage[absolute,overlay]{textpos}
% NOTE: TAC preamble macros come next...

\author{Masaki Fukuda, and Tommy Shu}

% NOTE: that \thanks is outside the \author macro, unlike article style...

\thanks{I would like to express my sincere gratitude to my supervisor, Professor Yuji Terashima, for his invaluable guidance, insightful comments, and constant support throughout this research. The work of MF is supported by JST SPRING, Grant Number JPMJSP2114, and the work of TS is supported by the Tohoku University Advanced Graduate Program for AI and Electronics.}

\address{Department of Physics, Tohoku University\\
 \\[5pt]
 Department of Mathematics, Tohoku
University \\
}

\title{3-Crossed Modules, Quasi-Categories, and the Moore Complex}

% NOTE: this is required...

\copyrightyear{2025}

% NOTE: the next three are optional in this style (but are required for TAC publication)

\keywords{crossed module, Gray category, simplicial group}
%\amsclass{}

% NOTE: that \CR here provides a vertical listing
\eaddress{masaki.fukuda.r8@dc.tohoku.ac.jp\CR seiyo.shu.r7@dc.tohoku.ac.jp}

% NOTE: author macros  BEGIN here
%       (they are all actually used in the article!!)
% *PLEASE* note the begin and end of author macros in the source file

\def\xypic{\hbox{\rm\Xy-pic}}

% examples of proclamation macros follow - these define environments like \begin{thm} ... \end{thm}
% by default they are italic, but roman proclamations for remarks, examples are also available
% with \newtheoremrm{}{}
\newtheorem{dfn}{Definition}

% note that \newtheorem{theorem}{Theorem} and several others are already in tac.sty
\newtheorem{theorem}{Theorem}

% note rm This is useful for remarks and examples
\newtheoremrm{rem}{Remark}

\newcommand{\sym}[1]{|\text{\scriptsize $#1$}|}
\newcommand{\act}[1]{\,\sp{#1}\!}

\newcommand{\frakg}{{\mathfrak{g}}}
\newcommand{\frakh}{{\mathfrak{h}}}
\newcommand{\frakl}{{\mathfrak{l}}}
\newcommand{\frakm}{{\mathfrak{m}}}

\newcommand{\HLp}[2]{{\{\!\!\{#1,#2\}\!\!\}}}
\newcommand{\del}{\partial}
% TAC has a \proof environment, with abbreviations following; note also \mathrmdef{} and \mathbfdef{}

\let\pf\proof
\let\epf\endproof
\mathrmdef{Hom}
\mathbfdef{Set}
\setcounter{tocdepth}{1}

% author macros END here

\renewcommand{\to}{\rightarrow}
\begin{document}

\begin{textblock*}{3cm}(15cm,1cm)
\raggedleft
TU-1294
\end{textblock*}
 
  \maketitle

\begin{abstract}

The established equivalence between 2-crossed modules and Gray 3-groups \cite{SarikayaUlualan2024} serves as a benchmark for higher-dimensional algebraic models. 
However, to the best of our knowledge, the established definitions of 3-crossed modules \cite{Arvasi2009} are not clearly suited for extending this equivalence.
In this paper, we propose an alternative formulation of a 3-crossed module, equipped with a new type of lifting, which is specifically designed to serve as a foundation for this higher-order categorical correspondence.
As the primary results of this paper, we validate this new structure. We prove that the simplicial set induced by our 3-crossed module forms a quasi-category. Furthermore, we show that the Moore complex of length 3 associated with a simplicial group naturally admits the structure of our 3-crossed module. This work establishes our definition as a robust candidate for modeling the next level in this algebraic-categorical program.
\end{abstract}

% NOTE: it is good practice to \label all headings (and proclamations) immediately

\tableofcontents
\clearpage

\section{Introduction}\label{sec-Introduction}

Crossed modules were introduced by Whitehead in 1949 as an algebraic model for homotopy 2-types. Conduché extended this notion to 2-crossed modules, providing an algebraic model for homotopy 3-types.

Crossed modules and 2-crossed modules are closely related to higher category theory. Noohi showed that the category of crossed modules is equivalent to the category of 2-groups \cite{Noohi2007}. Similarly, Sarikaya demonstrated that the category of 2-crossed modules is equivalent to that of Gray 3-groups \cite{SarikayaUlualan2024}. This equivalence establishes a crucial benchmark: a 'correct' definition of an n-crossed module should be algebraically rich enough to correspond to an $(n+1)$-dimensional higher categorical structure.

Crossed modules and 2-crossed modules have proven to be useful tools in the study of low-dimensional topology. For example, crossed modules play a key role in the definition of the twisted Yetter invariant, while 2-crossed modules appear in certain 4-dimensional invariants based on the classical 3BF theory \cite{RadenkovicVojinovic2022}. These two types of topological invariants can be viewed as generalizations of the Dijkgraaf–Witten invariant.

The Dijkgraaf–Witten invariant, constructed using a finite group and a triangulation of a manifold, is computed by counting the number of assignments of group elements to 1-simplices that satisfy certain compatibility conditions determined by the 2-simplices. In the case of the twisted Yetter invariant, a similar construction is used; however, instead of a single finite group, a finite crossed module is employed. In this setting, elements of one group in the crossed module are assigned to 1-simplices, while elements of the second group are assigned to 2-simplices, subject to compatibility conditions involving both 2-simplices and 3-simplices.

This idea naturally extends to 2-crossed modules, where an additional level of algebraic structure appears. In particular, the Peiffer lifting—a distinctive feature of 2-crossed modules—emerges in the compatibility conditions determined by the 4-simplices.

These two threads of inquiry—the algebraic modeling of homotopy types and the topological invariants—naturally motivate the search for a 3-crossed module.

A definition for \lq\lq3-crossed modules\rq\rq\ was proposed by Arvasi et al. \cite{Arvasi2009}. However, its structural properties and suitability for extending the aforementioned categorical equivalence \cite{SarikayaUlualan2024} are not fully clear. This ambiguity motivates the search for a robust alternative formulation.

This paper proposes such an alternative definition of a 3-crossed module. Our construction is specifically designed to serve as a foundation for extending the 2-crossed module and Gray 3-group equivalence to the next dimension.

In this paper, we first re-examine the 2-crossed module case to establish the properties required for this extension. We show that a 2-crossed module gives rise to a simplicial set, derived from the coloring conditions in low-dimensional topology, and prove that this simplicial set forms a quasi-category. We then introduce our new definition of a 3-crossed module, equipped with a new type of lifting, and demonstrate how this lifting naturally arises in extended coloring conditions. As the primary results of this paper, we prove that the simplicial set associated with our 3-crossed module also forms a quasi-category. Furthermore, we validate our definition by showing that the Moore complex of length 3 associated with a simplicial group admits the structure of our newly defined 3-crossed module.

The results presented in this paper support our definition, providing a foundation for a future paper where we will construct the corresponding one-dimension-higher Gray category and prove the anticipated equivalence.

\section{Preliminaries}\label{sec-Preliminaries}

\subsection{Simplicial sets and simplicial groups.}

In this section, we review some basic notions regarding simplicial sets and simplicial groups. The exposition here is based on \cite{RezkIntroQCats}.

We denote by $\Delta$ the category whose structure is as follows:
\begin{itemize}
  \item Objects are finite, non-empty, totally ordered sets of the form $[n] \coloneqq \{0 < 1 < \cdots < n\}$ for $n \ge 0$.
  \item Morphisms $\delta : [n] \rightarrow [m]$ are weakly monotonic functions, i.e., functions such that $x \le y$ implies $\delta(x) \le \delta(y)$.
\end{itemize}

We refer to morphisms in $\Delta$ as \textbf{simplicial operators}. Among these, there are distinguished morphisms called \textbf{face} and \textbf{degeneracy} operators:
\[
  d^n_i \coloneqq \langle 0, \dots, \hat{i}, \dots, n \rangle : [n - 1] \rightarrow [n], \quad 0 \le i \le n,
\]
\[
  s^n_i \coloneqq \langle 0, \dots, i, i, \dots, n \rangle : [n + 1] \rightarrow [n], \quad 0 \le i \le n.
\]

These face and degeneracy operators satisfy the following five identities:
\begin{equation}
\label{psgrp}
\left\{
  \begin{array}{ll}
    d^{n+1}_j \circ d^n_i = d^{n+1}_i \circ d^n_{j-1}, & \text{for } i < j; \\
    s^{n+1}_j \circ s^n_i = s^{n+1}_i \circ s^n_{j+1}, & \text{for } i \le j; \\
    s^n_j \circ d^{n+1}_i = \mathrm{id}_{[n]}, & \text{for } i = j \text{ or } i = j+1; \\
    s^n_j \circ d^{n+1}_i = d^n_i \circ s^{n-1}_{j-1}, & \text{for } i < j; \text{ and } \\
    s^n_j \circ d^{n+1}_i = d^n_{i-1} \circ s^{n-1}_j, & \text{for } i > j+1.
  \end{array}
\right.
\end{equation}

Let $\mathbf{Set}$ denote the category of sets, and let $\mathbf{Grp}$ denote the category of groups.  
A \textbf{simplicial set} is a contravariant functor $X : \Delta^{\mathrm{op}} \to \mathbf{Set}$. Similarly, a \textbf{simplicial group} is a contravariant functor $X : \Delta^{\mathrm{op}} \to \mathbf{Grp}$. We write $X_n$ for $X([n])$, and call it the set of \textbf{n-simplices} in X.

The \textbf{standard $n$-simplex} $\Delta^n$ is the simplicial set defined by
\[
\Delta^n \coloneqq \operatorname{Hom}_\Delta(-, [n]).
\]
Explicitly, this means that 
\[
(\Delta^n)_m = {\rm Hom}_\Delta ([m], [n]) = \{ \text{simplicial operators } a:[m] \rightarrow [n]  \},
\]
while the action of simplicial operators on cells of $\Delta^n$ is given by composition: $f: [m'] \rightarrow [m]$ sends $(a: [m] \rightarrow [n])\in (\Delta^n)_m $ to $(a \circ f : [m'] \rightarrow [n] \in (\Delta^n)_{m'})$.

\subsection{Quasi-categories}

We begin by defining \emph{horns}. For each $n \ge 1$ and $0 \le j \le n$, the \textbf{$j$-th horn} $\Lambda^n_j$ is a subcomplex of the standard $n$-simplex $\Delta^n$ defined by
\[
(\Lambda^n_j)_k = \{ f : [k] \to [n] \mid [n] \setminus \{ j \} \not\subset f([k]) \}.
\]
In other words, $\Lambda^n_j$ is the union of all $(n-1)$-dimensional faces of $\Delta^n$ except the $j$-th face:
\[
\Lambda^n_j = \bigcup_{i \neq j} \Delta^{[n] \setminus \{i\}} \subset \Delta^n.
\]
When $0 < j < n$, the horn $\Lambda^n_j \subset \Delta^n$ is called an \textbf{inner horn}.

A \textbf{quasi-category} is a simplicial set $C$ such that, for every map $f : \Lambda^n_j \to C$ from an inner horn, there exists its extension $g : \Delta^n \to C$. That is, $C$ is a quasi-category if the restriction map:
\[
\operatorname{Hom}(\Delta^n, C) \to \operatorname{Hom}(\Lambda^n_j, C)
\]
induced by inclusion $\Lambda^n_j \hookrightarrow \Delta^n$ is surjective for all $n \ge 2$ and $0 < j < n$. In other words, there always exists a dotted arrow completing the following commutative diagram.

\begin{center}
\begin{tikzpicture}[auto]
\node (a) at (0, 1.2) {$\Lambda^n_j$};
\node (x) at (3.2, 1.2) {$C$};
\node (b) at (0, 0) {$\Delta^n$};  
\draw[->] (a) to node {} (x);
\draw[>->] (a) to node {} (b);
\draw[->, dashed] (b) to node[swap] {} (x);
\end{tikzpicture}
\end{center}

\subsection{2-crossed modules}

We now define the notion of a 2-crossed module. Our definition differs slightly from that given in \cite{MartinsPicken2011}. In particular, we use the notation ${}^a b$ for the right action, instead of writing it as  $a \triangleright b$.

%\definton of 2-crossed moudule
A \textbf{2-crossed module} is given by a complex of groups:
\[
L \xrightarrow{\partial} H \xrightarrow{\partial} G 
\]
together with the following three structures:
\begin{itemize}
  \item a left action of $G$ by automorphisms on both $H$ and $L$ (with the action on $G$ given by conjugation);
  \item a left action of $H$ by automorphisms on $L$ (with the action on $H$ also given by conjugation); and
  \item a function $\{-,-\} : H \times H \rightarrow L$ (called the Peiffer lifting);
\end{itemize}
which are required to satisfy the following ten axioms:
\begin{enumerate}
  \item  $\partial\circ \partial =0$;
  \item  $\partial(^gh)= {^g}\partial(h),\ \partial({^g}l)= {^g}\partial(l),\ \partial(^hl)= {^h}\partial(l)$, for each $g \in G$, $h \in H$, and $l \in L$;
  \item  $^g \{h_2,h_1\} = \{\act{g} h_2, \act{g}h_1\}$, for each $g\in G$ and $h_1,h_2 \in H$;
  \label{pfeq}
  \item $\partial  \{h_2, h_1\} = h_2 h_1 h_2^{-1} \act{\partial h_2}h_1^{-1}$, for each $h_1,h_2 \in H$;
  \item $\act{\del l}l' = l l' l^{-1}$, for each $l,l' \in L$;
\label{2cm_pf_id}
  \item $\{\partial l_2,\partial l_1\} = l_2l_1l_2^{-1}l_1^{-1}$, for each $l_1,l_2 \in L$;
\label{2cm_not_need_pl}
  \item $\{h_3h_2,h_1\}={^{h_3}}\{h_2, h_1\}\{h_3, {^{\partial h_2}}h_1 \}$, for each $h_1,h_2, h_3 \in H$;
  \label{lpl}
  \item $\{h_3,h_2 h_1\}=\{h_3, h_2\} {^{(^{\partial h_3} h_2)}}\{h_3, h_1\}$, for each $h_1,h_2, h_3 \in H$;
  \label{rpl}
  \item ${^h}l = l\{\partial l^{-1}, h \}$, for each $h \in H$, and $l\in L$;
  \label{acthl}
  \item ${^{\partial h}}l = {^h}l\{h, \partial l^{-1} \}$, for each $h\in H$, and $l \in L$; and
  \label{actgl}
  \item $l^{-1} {^{\partial h}}l=\{\partial l^{-1}, h \}\{h, \partial l^{-1} \}$, for each $h \in H$, and $l \in L$.
  \label{2cm_not_need}
\end{enumerate}

\begin{rem}
In our context of 2-crossed modules, property \ref{2cm_not_need} follows from properties \ref{acthl} and \ref{actgl}. Similarly, property \ref{2cm_not_need_pl} can be derived by applying properties \ref{2cm_pf_id} and \ref{acthl} to the expression $\act{\del l_2}l_1$. We include these properties explicitly to emphasize that our structure satisfies the same axioms as standard 2-crossed modules.
\end{rem}

\section{2-crossed modules and simplicial sets}\label{sec-2-crossed module and sset}
Before presenting the definition of 3-crossed modules and proving that the simplicial set associated with a 3-crossed module is a quasi-category, we construct a simplicial set using a 2-crossed module and show that it is a quasi-category. Most of the discussion in this section serves as a simplified case of the 3-crossed module.

Let $W \coloneqq (L \xrightarrow{\partial} H \xrightarrow{\partial} G,\{-,-\} )$ be a 2-crossed module. We construct a simplicial set $M_W$ by the following data:

\begin{itemize}
    \item For each $[n] \in \Delta$, the set $M_W([n])$ consists of all the tuples $(g(-,-), h(-,-,-), l(-,$ $-,-,-))$, where:
    \begin{enumerate}
        \item $g: [n] \times_{\le} [n] \to G$ is a function assigning each ordered pair in $[n]$ to an element of $G$;
        \item $h:[n] \times_{\le} [n] \times_{\le} [n] \to H$ is a function assigning  each ordered triple in $[n]$ to an element of $H$; and
        \item $l: [n] \times_{\le} [n] \times_{\le} [n] \times_{\le} [n] \to L$ is a function assigning each ordered quadruple in $[n]$ to an element of $L$;
    \end{enumerate}
    and the three fuctions $g,h$ and $l$ satisfy the following conditions:

\begin{enumerate}[label=(\alph*)]
\item for each $i \le j \le k$ in $[n]$, we have the idenntity 
\begin{equation}
g_{ik} = \partial(h_{ijk})g_{jk}g_{ij};
\label{r2-simplex}
\end{equation}
\item for each $i \le j \le k \le m$ in $[n]$, we have the idenntity 
\begin{equation}
\partial (l_{ijkm})  h_{ikm}{^{g_{km}}}h_{ijk} = h_{ijm}h_{jkm};
\label{r3-simplex}
\end{equation}
\item for each $i \le j \le k \le m \le p$ in $[n]$, we have the idenntity 
\begin{equation}
 {^{h_{ijp}}}l_{jkmp}l_{ijmp}{^{h_{imp}}}\left({^{g_{mp}}}l_{ijkm}\right) = l_{ijkp}{^{h_{ikp}}}\{h_{kmp},{^{g_{mp}g_{km}}}h_{ijk} \}^{-1}l_{ikmp};
\label{r4-simplex}
\end{equation}
 \item for each $i \in [n]$, we have $g_{ii} = e_G$;
        
 \item for each $i \le j$ in $[n]$, we have $h_{iij} = h_{ijj} = e_H$; and
        
 \item for each $i \le j \le k$ in $[n]$, we have $l_{iijk} = l_{ijjk} = l_{ijkk} = e_L$.
    \end{enumerate}
    
    Here, we simply denote  $g(i,j)$, $h(i,j,k)$, and $l(i,j,k,m)$ by $g_{ij}$, $h_{ijk}$, and $l_{ijkm}$, respectively.
    
\item For a simplicial operater $\delta : [m] \rightarrow [n]$, we define $M_W(\delta): (M_W)_n \rightarrow (M_W)_m$ as follows: for each $(g,h,l) \in (M_W)_n$, %$M_W(\delta)$,
\[
M_W(\delta)
\begin{pmatrix}
g(-,-)\\
h(-,-,-)\\
l(-,-,-,-)\\
\end{pmatrix}
\coloneqq
\begin{pmatrix}
g(\delta(-),\delta(-))\\
h(\delta(-),\delta(-),\delta(-))\\
l(\delta(-),\delta(-),\delta(-),\delta(-))\\
\end{pmatrix}
.\]
\end{itemize}

\noindent
We will show that simplicial set $M_W$ is a quasi-category in the following theorem.

\begin{theorem}\label{2cm and qusi}
Let $W \coloneqq (L \xrightarrow{\partial} H \xrightarrow{\partial} G,\{-,-\} )$ be a 2-crossed module. Then the simplicial set $M_W$ is a quasi-category.
\end{theorem}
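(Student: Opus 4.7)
The plan is to fill an arbitrary inner horn $\Lambda^n_j \to M_W$ (with $0 < j < n$) by analyzing which of the data $(g, h, l)$ the horn leaves undetermined. A nonempty tuple $S \subseteq [n]$ lies in some face $[n]\setminus\{i\}$ of the horn iff there exists $i \ne j$ with $i \notin S$, so the only undetermined values are those indexed by $S = [n]$ itself or $S = [n]\setminus\{j\}$. Since $g$, $h$, and $l$ are defined on pairs, triples, and quadruples respectively, the argument splits into cases by $n$: for $n = 2, 3, 4$ genuinely new $g$/$h$/$l$-values must be constructed; for $n = 5$ all values are already supplied and a single quintuple identity must be checked; and for $n \ge 6$ every pair, triple, quadruple, and quintuple already sits in some face of the horn, so all of (a)--(f) are inherited automatically. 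The degeneracy conditions (d)--(f) never need special treatment, since every degenerate tuple lies in multiple faces.

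For $n = 2$ (the only inner horn is $\Lambda^2_1$), I would set $h_{012} := e_H$ and $g_{02} := g_{12} g_{01}$, making (\ref{r2-simplex}) hold at $(0,1,2)$. For $n = 3$, the missing face triple is $T = [3]\setminus\{j\}$ and the top-level quadruple $(0,1,2,3)$ has no $l$ yet; I would set $l_{0123} := e_L$ and rearrange (\ref{r3-simplex}) at $(0,1,2,3)$ to define the missing $h_T$. The residual check (\ref{r2-simplex}) at $T$ then follows by applying $\partial$ to the defining equation for $h_T$, invoking the equivariance axiom $\partial({}^g h) = {}^g \partial h$, and canceling against the three known triple identities supplied by the horn.

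The first genuinely nontrivial case is $n = 4$. The horn provides all $g$ and $h$ values together with four of the five $l$-values; the missing $l$-value, indexed by $[4]\setminus\{j\}$, is defined by solving (\ref{r4-simplex}) at the unique ambient quintuple $(0,1,2,3,4)$. One must then verify that (\ref{r3-simplex}) at the quadruple $[4]\setminus\{j\}$ is satisfied automatically. I expect this to be the main obstacle: applying $\partial$ to the defining formula and substituting the Peiffer lifting axiom $\partial\{h_2, h_1\} = h_2 h_1 h_2^{-1} \, {}^{\partial h_2} h_1^{-1}$ should reduce the target identity in $H$ to a telescoping combination of the four known quadruple identities, with careful tracking of the $G$- and $H$-conjugations.

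For $n = 5$, all data is supplied and only (\ref{r4-simplex}) at the single missing 5-tuple $[5]\setminus\{j\}$ remains to be checked. This is the most computation-intensive step: one combines the five known quintuple identities and rewrites nested conjugations and Peiffer brackets using ${}^g\{h_2, h_1\} = \{{}^g h_2, {}^g h_1\}$, the left and right Peiffer laws, and the $H$-action identities ${}^h l = l\{\partial l^{-1}, h\}$ and ${}^{\partial h} l = {}^h l\{h, \partial l^{-1}\}$, bringing both sides into a common normal form. Finally, for $n \ge 6$ every quintuple $S \subseteq [n]$ satisfies $|[n] \setminus S| \ge 2$, so one may pick $i \in [n]\setminus S$ with $i \ne j$ and inherit (\ref{r4-simplex}) on $S$ from the face $[n]\setminus\{i\}$ already supplied by the horn, completing the fill.
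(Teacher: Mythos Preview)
Your case analysis and constructions coincide with the paper's: the paper also sets $h_{012}=e_H$, $g_{02}=g_{12}g_{01}$ at $n=2$; sets $l_{0123}=e_L$ and solves (\ref{r3-simplex}) for the missing $h$ at $n=3$; and at $n=4$ defines the missing $l$-value by solving (\ref{r4-simplex}) at $(0,1,2,3,4)$ exactly as you describe.

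Where you diverge is in rigor. The paper treats all $n\ge 5$ uniformly as ``extends directly,'' and at $n=4$ it writes down the formula for the missing $l$ but never verifies that (\ref{r3-simplex}) holds at the missing quadruple $[4]\setminus\{j\}$. You correctly flag both of these as genuine obligations. In particular, your separation of $n=5$ is warranted: the quintuple $[5]\setminus\{j\}$ lies in no face of the horn other than the $j$-th, so condition (\ref{r4-simplex}) there is \emph{not} inherited and must be deduced from the five known quintuple identities together with the 2-crossed module axioms. This is a nontrivial coherence computation (morally, a $\delta^2=0$ statement twisted by the Peiffer structure), and your list of tools---axioms~\ref{pfeq}, \ref{lpl}, \ref{rpl}, \ref{acthl}, \ref{actgl}---is the right arsenal, though you should expect the calculation to be lengthy. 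The paper does not carry it out.

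So: same strategy, but your outline is more honest about what remains to be checked, and the $n=5$ verification you isolate is real work that the paper's proof omits.
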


In order to prove Theorem~\ref{2cm and qusi}, we need the following two lemmas.

\begin{lemma}
\label{lemma1}
Let \( W \coloneqq (L \xrightarrow{\partial} H \xrightarrow{\partial} G, \{-,-\}) \) be a 2-crossed module. For any \( n \ge 1 \) and any inner horn \( \Lambda^n_j \), let \( \phi \in \operatorname{Hom}(\Lambda^n_j, M_W) \). Then, for any \( m \ge 1 \) and any \( f \in (\Lambda^n_j)_m \), the following identity holds:
\[
\phi_m(f) 
\coloneqq
\begin{pmatrix}
\phi^g_m(f)(-,-) \\
\phi^h_m(f)(-,-,-) \\
\phi^l_m(f)(-,-,-,-)
\end{pmatrix}
=
\begin{pmatrix}
\phi^g_1(f(-,-))(0,1) \\
\phi^h_2(f(-,-,-))(0,1,2) \\
\phi^l_3(f(-,-,-,-))(0,1,2,3)
\end{pmatrix}.
\]
\end{lemma}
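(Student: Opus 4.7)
The plan is to leverage the naturality of $\phi \colon \Lambda^n_j \to M_W$ with respect to simplicial operators. The only non-formal observation needed is that $\Lambda^n_j$ is a simplicial subset of $\Delta^n$: for any $f \in (\Lambda^n_j)_m$ and any simplicial operator $\delta \colon [r] \to [m]$, the composite $f \circ \delta$ lies in $(\Lambda^n_j)_r$, since the defining condition $[n] \setminus \{j\} \not\subset f([m])$ immediately propagates to $f(\delta([r])) \subset f([m])$. This ensures that the low-dimensional evaluations on the right-hand side of the claim make sense.

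First I would fix a weakly increasing tuple $(a_0 \le \cdots \le a_r)$ in $[m]$ for $r \in \{1, 2, 3\}$ and introduce the simplicial operator $\iota \colon [r] \to [m]$ defined by $i \mapsto a_i$. Applying naturality of $\phi$ to $\iota$ yields
\begin{equation*}
M_W(\iota)\bigl(\phi_m(f)\bigr) \;=\; \phi_r(f \circ \iota) \quad \text{in } (M_W)_r,
\end{equation*}
where $f \circ \iota$ is precisely the restriction $f(a_0, \ldots, a_r)$ featured in the statement of the lemma.

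Next I would unpack both sides by evaluating the identity at the bottom tuple $(0, 1, \ldots, r)$. By the definition of $M_W$ on morphisms, the left-hand side $M_W(\iota)(\phi_m(f))$ evaluated at $(0, 1, \ldots, r)$ recovers the corresponding component of $\phi_m(f)$ at $(a_0, \ldots, a_r)$: concretely, $\phi^g_m(f)(a_0, a_1)$ for $r = 1$, $\phi^h_m(f)(a_0, a_1, a_2)$ for $r = 2$, and $\phi^l_m(f)(a_0, a_1, a_2, a_3)$ for $r = 3$. The right-hand side, by inspection, yields $\phi^g_1(f \circ \iota)(0,1)$, $\phi^h_2(f \circ \iota)(0,1,2)$, and $\phi^l_3(f \circ \iota)(0,1,2,3)$ respectively, which are exactly the desired expressions.

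No serious obstacle arises: the lemma is formal and records the general principle that a map of simplicial sets out of a subcomplex of $\Delta^n$ is determined by its values on simplices of each relevant dimension, refracted here through the three-tier grading of $M_W$ by the groups $G$, $H$, and $L$. The only mild care is in handling the edge cases where some $a_i$ coincide, but these fall under degeneracies and are covered uniformly by the same naturality argument together with conditions (d)--(f) in the definition of $M_W$.
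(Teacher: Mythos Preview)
Your proposal is correct and follows essentially the same approach as the paper: both arguments define the simplicial operator $\iota\colon [r]\to[m]$ (the paper calls it $\langle ijkp\rangle$ in the case $r=3$) and read off the identity from the naturality square for $\phi$ applied to $\iota$, evaluating at the bottom tuple $(0,\ldots,r)$. Your version is slightly more explicit in spelling out why $f\circ\iota$ remains in $(\Lambda^n_j)_r$, but otherwise the two arguments coincide.
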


For any $m\ge 1$, any $k+1$ integers  \( 0\le x_0 \le x_1 \le \dots \le x_k \le m\)
and
each \( f \in (\Lambda^n_j)_m \),
clearly there exists a function \(f' \in (\Lambda^n_j)_k \) 
satisfying $f'(i) = f(x_i)$ for each $i$, $0\leq i\leq k$.
We renamed such a function $f'$ to  \( f(x_0, x_1, \dots, x_k) \in (\Lambda^n_j)_k \) 
for improving readability.

\begin{proof}
We will prove the case of \( \phi^l_m(f)(i,j,k,p) = \phi^l_3(f(i,j,k,p))(0,1,2,3) \). The other components \( \phi^g_m \) and \( \phi^h_m \) can be handled in the same way.

Let \( \phi \in \operatorname{Hom}(\Lambda^n_j, M_W) \) and let \( f \in (\Lambda^n_j)_m \) for \( m \ge 1 \). We consider \( \phi^l_m(f)(i,j,k,p) \) for \( i \le j \le k \le p \) in \( [m] \).

Define a function \( \langle ijkp \rangle : [3] \to [m] \) by:
\[
\langle ijkp \rangle(x) \coloneqq
\begin{cases}
i & \text{if } x = 0, \\
j & \text{if } x = 1, \\
k & \text{if } x = 2, \\
p & \text{if } x = 3.
\end{cases}
\]
Using the defining property of a natural transformation $\phi$, we obtain the following commutative diagram:
\begin{center}
\begin{tikzpicture}[scale=4] 
       \coordinate (0) at (0,0); 
 	\coordinate (1) at (1,0); 
	 \coordinate (2) at (0,-1);
	 \coordinate (3) at (1,-1);
	 \draw (0) node[] {$(\Lambda^n_j)_m$};
	 \draw (1) node[] {$(M_W)_m$};
	 \draw (2) node[] {$(\Lambda^n_j)_3$};
	 \draw (3) node[] {$(M_W)_3$};
 \draw[->]  ($(0)+(0.2,0)$) to  ($(1)+(-0.2,0)$);
  \draw[->]  ($(2)+(0.2,0)$) to ($(3)+(-0.2,0)$);
  \draw[->]  ($(0)+(0,-0.2)$) to ($(2)+(0,0.2)$);
\draw[->]  ($(1)+(0,-0.2)$) to ($(3)+(0,0.2)$);
 \draw (0.5,0) node[above] {$\phi_m$};  
 \draw (0.5,-1) node[below] {$\phi_3$};
 \draw (0,-0.5) node[left] {$(\Lambda^n_j)(\langle ijkp\rangle)$};
 \draw (1,-0.5) node[right] {$M_W(\langle ijkp\rangle)$};
 \draw[<-] (0.6,-0.5) arc[start angle=0, end angle=-270, radius=0.1];
\end{tikzpicture}
\end{center}

From this diagram, we obtain
\begin{align*}
\phi^l_m(f)(i,j,k,p) 
&= \left(M_W(\langle ijkp \rangle)\left(\phi_m (f)\right)\right)(0,1,2,3) \\
&= \left( \phi_3\left(\Lambda^n_j (\langle ijkp\rangle)(f) \right) \right)(0,1,2,3) \\
&= \phi^l_3 (\langle f(i)f(j)f(k)f(p) \rangle)(0,1,2,3) .
\end{align*}
This completes the proof.
\end{proof}

By examining the proof above, we observe that the argument does not depend on the fact that we are using an inner horn \( \Lambda^n_j \). The same reasoning applies when \( \Lambda^n_j \) is replaced by the standard simplex \( \Delta^n \). For this reason, we obtain the following corollary.

\begin{corollary}
Let \( W \coloneqq (L \xrightarrow{\partial} H \xrightarrow{\partial} G, \{-,-\}) \) be a 2-crossed module. For any \( n \ge 1 \) and any standard simplex \( \Delta^n \), let \( \phi \in \operatorname{Hom}( \Delta^n, M_W) \). Then, for any \( m \ge 1 \) and any \( f \in (\Delta^n)_m \), the following identity holds:
\[
\phi_m(f) 
\coloneqq
\begin{pmatrix}
\phi^g_m(f)(-,-) \\
\phi^h_m(f)(-,-,-) \\
\phi^l_m(f)(-,-,-,-)
\end{pmatrix}
=
\begin{pmatrix}
\phi^g_1(f(-,-))(0,1) \\
\phi^h_2(f(-,-,-))(0,1,2) \\
\phi^l_3(f(-,-,-,-))(0,1,2,3)
\end{pmatrix}.
\]
\end{corollary}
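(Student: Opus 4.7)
The plan is to observe that the proof of Lemma~\ref{lemma1} never uses any distinguishing feature of the inner horn $\Lambda^n_j$: the only ingredients invoked are that $\Lambda^n_j$ is a simplicial set and that $\phi$ is a morphism of simplicial sets, i.e., a natural transformation $\Delta^{\mathrm{op}} \to \mathbf{Set}$. Since $\Delta^n$ is also a simplicial set and, by hypothesis, $\phi \in \operatorname{Hom}(\Delta^n, M_W)$ is a natural transformation, exactly the same reasoning applies.

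Concretely, I would repeat the argument word-for-word with $\Delta^n$ in place of $\Lambda^n_j$. Fix $\phi \in \operatorname{Hom}(\Delta^n, M_W)$, $m \ge 1$, $f \in (\Delta^n)_m$, and indices $i \le j \le k \le p$ in $[m]$. Define the simplicial operator $\langle ijkp \rangle : [3] \to [m]$ exactly as in Lemma~\ref{lemma1}. Naturality of $\phi$ with respect to $\langle ijkp \rangle$ gives a commutative square linking $(\Delta^n)_m$, $(\Delta^n)_3$, $(M_W)_m$, and $(M_W)_3$, whose vertical arrows are precomposition with $\langle ijkp \rangle$ and whose horizontal arrows are $\phi_m$ and $\phi_3$. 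Chasing $f$ around this square and evaluating the resulting element of $(M_W)_3$ at the tuple $(0,1,2,3)$ yields
\[
\phi^l_m(f)(i,j,k,p) = \phi^l_3(f(i,j,k,p))(0,1,2,3),
\]
where $f(i,j,k,p) \in (\Delta^n)_3$ denotes the pull-back $\Delta^n(\langle ijkp\rangle)(f)$ in the notation fixed right after Lemma~\ref{lemma1}. The identities for $\phi^g_m$ and $\phi^h_m$ are obtained identically, using the simplicial operators $\langle ij\rangle : [1] \to [m]$ and $\langle ijk\rangle : [2] \to [m]$ in place of $\langle ijkp\rangle$.

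Since the argument is a purely formal consequence of naturality, there is no real obstacle to overcome; the only point to verify is that every step of the original proof goes through when $\Lambda^n_j$ is replaced by $\Delta^n$, which is immediate because the proof nowhere invokes the defining property $[n] \setminus \{j\} \not\subset f([k])$ of horns. In fact, the same reasoning would establish a more general statement in which $\Lambda^n_j$ or $\Delta^n$ is replaced by any simplicial subset of $\Delta^n$, since only the simplicial-set structure, and not the subset property, is used.
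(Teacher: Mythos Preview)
Your proposal is correct and follows exactly the approach the paper takes: the paper simply remarks that the proof of Lemma~\ref{lemma1} does not depend on the inner-horn property of $\Lambda^n_j$, so the same reasoning applies with $\Delta^n$ in its place. Your write-up is, if anything, more detailed than the paper's own justification, and your closing observation about arbitrary simplicial subsets is a harmless extra generality.
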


By using Lemma~\ref{lemma1}, we can conclude that any $\phi \in \operatorname{Hom}(\Lambda^n_j, M_W)$ is determined by $\phi_1, \phi_2$, and $\phi_3$.
Let $\psi \in \operatorname{Hom}(\Lambda^2_1, M_W)$. Then, it is possible that $\psi^l_3(\langle 0111 \rangle)(0,1,2,3)$ occurs, by Lemma~\ref{lemma1}.
However, we want $\psi$ to be determined solely by $\psi_1$, since $\Lambda^2_1$ contains only 1-simplices.

The following lemma resolves the issue that arises when degenerate simplices are involved.

\begin{lemma}
\label{lemma2}

Let \( W \coloneqq (L \xrightarrow{\partial} H \xrightarrow{\partial} G, \{-,-\}) \) be a 2-crossed module. For any \( n \ge 1 \) and any inner horn \( \Lambda^n_j \), let \( \phi \in \operatorname{Hom}(\Lambda^n_j, M_W) \). Then for any map $ \langle a_0 \dots a_i a_{i+1}\dots a_m  \rangle \in (\Lambda^n_j)_m $ with \(1\le m \le 3 \), the following equation holds whenever $a_i = a_{i+1}$ for any $0\le i  \le m-1$:
\[
\phi_m (\langle a_0 \dots a_i a_{i+1}\dots a_m \rangle) (0,1,\dots, m) = \phi_{m-1} (\langle a_0 \dots a_i \hat{a}_{i+1}\dots a_m \rangle) (0,1,\dots, i,i,\dots,m-1)
\]
where the $\hat{a}_{i+1}$ indicates that ${a}_{i+1}$ omitted.

\end{lemma}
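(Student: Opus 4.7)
The plan is to recognize Lemma~\ref{lemma2} as a direct consequence of the naturality of $\phi$ applied to the standard degeneracy operator $s^{m-1}_i$.

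First, since $a_i = a_{i+1}$, the weakly monotonic function $\langle a_0 \dots a_i a_{i+1} \dots a_m \rangle : [m] \to [n]$ factors through $s^{m-1}_i : [m] \to [m-1]$ (which sends $0, 1, \dots, m$ to $0, 1, \dots, i, i, \dots, m-1$):
\[
\langle a_0 \dots a_i a_{i+1} \dots a_m \rangle = \langle a_0 \dots a_i \hat{a}_{i+1} \dots a_m \rangle \circ s^{m-1}_i.
\]
Before proceeding one should verify that the shortened map $\langle a_0 \dots a_i \hat{a}_{i+1} \dots a_m \rangle$ still lies in $(\Lambda^n_j)_{m-1}$; this is immediate because its image coincides with that of the original map (removing a repeated vertex does not alter the image), so the horn condition $[n] \setminus \{j\} \not\subset f([k])$ is preserved.

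Second, I would invoke the naturality of $\phi$ with respect to the operator $s^{m-1}_i$. Writing $f' \coloneqq \langle a_0 \dots a_i \hat{a}_{i+1} \dots a_m \rangle \in (\Lambda^n_j)_{m-1}$, naturality gives
\[
\phi_m(f' \circ s^{m-1}_i) = M_W(s^{m-1}_i)\bigl( \phi_{m-1}(f') \bigr).
\]
By the definition of $M_W$ on a simplicial operator $\delta$, the map $M_W(\delta)$ acts on a tuple $(g, h, l)$ by precomposing each argument-function with $\delta$. Hence for any sub-tuple of indices from $[m]$, the value of $M_W(s^{m-1}_i)(\phi_{m-1}(f'))$ at that sub-tuple equals the value of $\phi_{m-1}(f')$ at the image of that sub-tuple under $s^{m-1}_i$. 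Evaluating at the full vertex tuple $(0, 1, \dots, m)$ of $[m]$ therefore produces the value of $\phi_{m-1}(f')$ at $(s^{m-1}_i(0), s^{m-1}_i(1), \dots, s^{m-1}_i(m)) = (0, 1, \dots, i, i, \dots, m-1)$, which matches the stated right-hand side.

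There is no serious obstacle in this argument. The main care lies in the uniform treatment of the three components $g$, $h$, $l$ (of arities $2$, $3$, $4$) when interpreting the shorthand $\phi_m(f)(0, 1, \dots, m)$, and in confirming that the horn condition survives dropping a repeated vertex. The same strategy then handles the cases $m = 1, 2, 3$ uniformly without any case analysis, and in fact the same proof works verbatim with $\Lambda^n_j$ replaced by $\Delta^n$, giving the analogous statement for standard simplices as a free corollary.
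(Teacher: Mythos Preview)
Your proof is correct and uses essentially the same underlying idea as the paper: naturality of $\phi$ with respect to simplicial operators. The paper's one-line proof invokes Lemma~\ref{lemma1} (itself a naturality computation with the face-type operator $\langle ijkp \rangle$), so that both sides of the claimed identity reduce to the same low-dimensional evaluation $\phi^l_3(\langle a_0 a_1 a_2 a_3\rangle)(0,1,2,3)$; you instead apply naturality directly to the degeneracy $s^{m-1}_i$, bypassing Lemma~\ref{lemma1} entirely. Your route is marginally more self-contained, while the paper's emphasizes that Lemma~\ref{lemma1} already encodes everything needed; both are equally short and rest on the same mechanism.
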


\begin{proof}
This follows directly from Lemma~\ref{lemma1}.
\end{proof}

Now we can prove Theorem~\ref{2cm and qusi}.

\begin{proof}[of Theorem~\ref{2cm and qusi}]
What we need to prove is that every \( \psi \in \operatorname{Hom}(\Lambda^n_j, M_W) \) can be extended to  
\( \tilde{\psi} \in \operatorname{Hom}(\Delta^n, M_W) \) for all \( n \geq 2 \) and \( 0 < j < n \).  
By using Lemmas~\ref{lemma1} and~\ref{lemma2}, and properties (d)–(e) of \( M_W \),  
to determine \( \tilde{\psi} \in \operatorname{Hom}(\Delta^n, M_W) \),  
it suffices to construct  
$
\tilde{\psi}_2(\langle ij \rangle)(0,1), \ 
\tilde{\psi}_3(\langle ijk \rangle)(0,1,2), \quad \text{and} \ 
\tilde{\psi}_4(\langle ijkp \rangle)(0,1,2,3)
$
for each \( 0 \leq i \leq j \leq k \leq p \leq n \).  
There are four cases to consider.

%\begin{enumerate}[label=(\roman*)]
\begin{enumerate}
\item ${\rm Hom}(\Lambda^2_1, M_W)$ to ${\rm Hom}(\Delta^2, M_W)$

The images of $\Lambda^2_1$  and $\Delta^2$ are illstrated in Figrue~\ref{fig:l^2_1}.

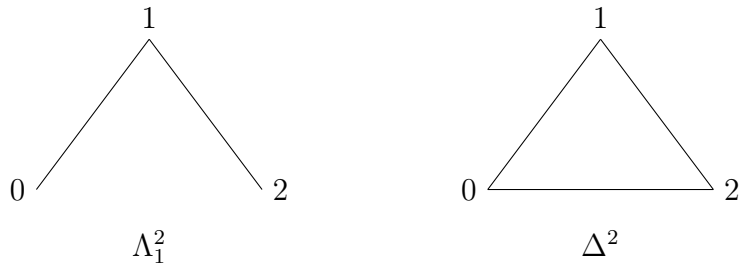
\begin{figure}[htbp]
\begin{center}
\begin{tikzpicture}[scale=2]
\label{fig:l^2_1}
\coordinate[label=left:0]  (0) at (0,0);
\coordinate[label=above:1]  (1) at (0.75,1);
\coordinate[label=right:2]  (2) at (1.5,0);
\draw (0) -- (1);
\draw (1) -- (2);
\draw (0.75,-0.2) node [below] {$\Lambda^2_1$};

\begin{scope}[xshift=3.0cm]
\coordinate[label=left:0]  (0) at (0,0);
\coordinate[label=above:1]  (1) at (0.75,1);
\coordinate[label=right:2]  (2) at (1.5,0);
\draw (0) -- (1);
\draw (1) -- (2);
\draw (0) -- (2);
\draw (0.75,-0.2) node [below] {$\Delta^2$};
\end{scope}

\end{tikzpicture}
\end{center}
\caption{$\Lambda^2_1$ and $\Delta^2$}
\label{l^2_1}
\end{figure}

For any \( \psi \in \operatorname{Hom}(\Lambda^2_1, M_W) \), the map \( \psi \) is determined by  
\( \psi_1(\langle 01 \rangle)(0,1) \) and \( \psi_1(\langle 12 \rangle)(0,1) \).
To extend \( \psi \) to \( \tilde{\psi} \in \operatorname{Hom}(\Delta^2, M_W) \),  
it suffices to construct  
\( \tilde{\psi}_1(\langle 02 \rangle)(0,1) \) and \( \tilde{\psi}_2(\langle 012 \rangle)(0,1,2) \).
We construct these as follows:  
\begin{align*}
\tilde{\psi}_1(\langle 01 \rangle)(0,1) &\coloneqq {\psi}_1(\langle 01 \rangle)(0,1)\\
\tilde{\psi}_1(\langle 12 \rangle)(0,1) &\coloneqq {\psi}_1(\langle 12 \rangle)(0,1)\\
\tilde{\psi}_1(\langle 02 \rangle)(0,1) &\coloneqq \psi_1(\langle 12\rangle)(0,1)\cdot \psi_1(\langle 01\rangle)(0,1)\\
\tilde{\psi}_2(\langle 012 \rangle)(0,1,2) &\coloneqq e_H
.
\end{align*}
It is easy to see that \( \tilde{\psi} \) is an element of \( \operatorname{Hom}(\Delta^2, M_W) \).

\item ${\rm Hom}(\Lambda^3_i, M_W)$ to ${\rm Hom}(\Delta^3, M_W)$ for $1 \le i \le 2$

We will prove the case \( i = 2 \). The case \( i = 1 \) can be handled in a similar manner.
By comparing \( \Lambda^3_2 \) and \( \Delta^3 \), the difference lies in the presence of the 2-simplex \( \langle 013 \rangle \) and the 3-simplex \( \langle 0123 \rangle \).  
To extend a map \( \psi \in \operatorname{Hom}(\Lambda^3_2, M_W) \) to  
\( \tilde{\psi} \in \operatorname{Hom}(\Delta^3, M_W) \),  
we need to construct $\tilde{\psi}_2(\langle 013 \rangle)(0,1,2)$ and $\tilde{\psi}_3(\langle 0123 \rangle)(0,1,2,3)$. We construct them as follows:
\begin{align*}
\tilde{\psi}_1(\langle ij \rangle)(0,1) &\coloneqq {\psi}_1(\langle ij \rangle)(0,1) \ {\rm\ for\ all\ 0\le i\le j \le 3}\\
\tilde{\psi}_2(\langle 012 \rangle)(0,1,2) &\coloneqq {\psi}_2(\langle 012 \rangle)(0,1,2) \\
\tilde{\psi}_2(\langle 123 \rangle)(0,1,2) &\coloneqq {\psi}_2(\langle 123 \rangle)(0,1,2) \\
\tilde{\psi}_2(\langle 023 \rangle)(0,1,2) &\coloneqq {\psi}_2(\langle 023 \rangle)(0,1,2) \\
\tilde{\psi}_2(\langle 013 \rangle)(0,1,2) &\coloneqq h_{023}{^{g_{23}}}h_{012}h^{-1}_{234}\\
\tilde{\psi}_3(\langle 0123 \rangle)(0,1,2,3) &\coloneqq e_L.
\end{align*}

%&\coloneqq \psi_2(\langle 023 \rangle)(0,1,2)\cdot {^{\psi_1(\langle 23 \rangle)(0,1)}}\psi_2(\langle 012 \rangle)(0,1,2) \cdot  \psi^{-1}_2(\langle 234 \rangle)(0,1,2) \\

Here, we write \( g_{ij} \) and \( h_{ijk} \) as follows:
\begin{align*}
g_{ij}&\coloneqq \psi_1(\langle ij \rangle)(0,1)\\
h_{ijk}&\coloneqq \psi_2(\langle ijk \rangle)(0,1,2).
\end{align*}

\item ${\rm Hom}(\Lambda^4_i, M_W)$ to ${\rm Hom}(\Delta^4, M_W)$ for $1 \le i \le 3$

We will prove the case \( i = 3 \). The cases \( i = 1, 2 \) can be handled in a similar manner.
The difference between \( \Lambda^4_3 \) and \( \Delta^4 \) is the presence of the 3-simplex \( \langle 0124 \rangle \) and the 4-simplex \( \langle 01234 \rangle \).
By the properties of \( M_W \), to extend \( \psi \in \operatorname{Hom}(\Lambda^4_3, M_W) \) to  
\( \tilde{\psi} \in \operatorname{Hom}(\Delta^4, M_W) \),  
it suffices to construct only  $\tilde{\psi}_3(\langle 0124 \rangle)(0,1,2,3)$.
The following construction will be sufficient:  
\begin{align*}
\tilde{\psi}_1(\langle ij \rangle)(0,1) &\coloneqq {\psi}_1(\langle ij \rangle)(0,1)  \text{ for all } 0\le i\le j \le 3,\\
\tilde{\psi}_2(\langle ijk \rangle)(0,1,2) &\coloneqq {\psi}_2(\langle ijk \rangle)(0,1,2)  \text{ for all }  0\le i\le j \le k\le 4,\\
\tilde{\psi}_3(\langle 1234 \rangle)(0,1,2) &\coloneqq {\psi}_3(\langle 1234 \rangle)(0,1,2),\\
\tilde{\psi}_3(\langle 0234 \rangle)(0,1,2) &\coloneqq {\psi}_3(\langle 0234 \rangle)(0,1,2),\\
\tilde{\psi}_3(\langle 0134 \rangle)(0,1,2) &\coloneqq {\psi}_3(\langle 0134 \rangle)(0,1,2),\\
\tilde{\psi}_3(\langle 0123 \rangle)(0,1,2) &\coloneqq {\psi}_3(\langle 0123 \rangle)(0,1,2), \text{ and }\\
\tilde{\psi}_3(\langle 0123 \rangle)(0,1,2) &\coloneqq {}^{h_{014}} l_{1234} \, l_{0134} \, {}^{h_{034}}\left( {}^{g_{34}} l_{0123}\right) \, l^{-1}_{0234} \, {}^{h_{024}} \{ h_{234}, {}^{g_{34} g_{23}} h_{012} \}.
\end{align*}
Here, we write \( g_{ij} \), \( h_{ijk} \), and \( l_{ijkm} \)  as follows: 
\begin{align*}
g_{ij}&\coloneqq \psi_1(\langle ij \rangle)(0,1),\\
h_{ijk}&\coloneqq \psi_2(\langle ijk \rangle)(0,1,2), \text{ and}\\
l_{ijkm}&\coloneqq \psi_3(\langle ijkm \rangle)(0,1,2,3).
\end{align*}

\item ${\rm Hom}(\Lambda^n_i, M_W)$ to ${\rm Hom}(\Delta^n, M_W)$ for $5\le n$ and $1 \le i \le n-1$

The difference between \( \Lambda^n_i \) and \( \Delta^n \) is the presence or absence of the \((n-1)\)-simplex \( \langle 01 \dots \hat{i} \dots n \rangle \) and the \( n \)-simplex \( \langle 01 \dots n \rangle \).
By the properties of \( M_W \), any \( \psi \in \operatorname{Hom}(\Lambda^n_i, M_W) \) can be extended directly to  
\( \tilde{\psi} \in \operatorname{Hom}(\Delta^n, M_W) \) as follows:
\begin{align*}
\tilde{\psi}_1(\langle ij \rangle)(0,1) &\coloneqq {\psi}_1(\langle ij \rangle)(0,1)  \text{ for all }  0\le i\le j \le 3, \\
\tilde{\psi}_2(\langle ijk \rangle)(0,1,2) &\coloneqq {\psi}_2(\langle ijk \rangle)(0,1,2)  \text{ for  all } 0\le i\le j \le k\le 4, \text{ and}\\
\tilde{\psi}_3(\langle ijkm \rangle)(0,1,2,3) &\coloneqq {\psi}_3(\langle ijkm \rangle)(0,1,2,3) \text{ for all } 0\le i\le j \le k\le m \le n.
\end{align*}
\end{enumerate}
This completes the proof.
\end{proof}

\section{3-crossed modules and simplicial sets}\label{sec-3-crossed module and sset}

In this section, we define a 3-crossed module. After defining the notion of a 3-crossed module, we construct a simplicial set associated with it and prove that this simplicial set is a quasi-category.

A 3-crossed module involves four distinct groups, four types of actions, and six types of liftings.

\begin{dfn}{(3-crossed module)}
\label{d3cm}
Let $G,H,L,M$ be groups. A {\bf 3-crossed module} is a complex of groups:
\[
 M\xrightarrow{\partial}L \xrightarrow{\partial} H \xrightarrow{\partial} G 
\]
together with the following structures:
\begin{itemize}
  \item a left action of $G$ by automorphisms on  $H$, $L$ and $M$ (with the action on $G$ given by conjugation),
  \item a left action of $H$ by automorphisms on $L$ and $M$ (with the action on $H$ also given by conjugation),
  \item a left action of $L$ by automorphisms on $M$(with the action on $L$ also given by conjugation),
  \item a function $\{-,-\} : H \times H \rightarrow L$ (called the Peiffer lifting),
  \item a function $\{-,-,-\} : H \times H \times H  \rightarrow M$ (called the Left-Homanian),
  \item a function $\{-,-,-\}' : H \times H \times H  \rightarrow M$ (called the Right-Homanian),
  \item a function $\{-,-\}_{HL} : H \times L \rightarrow M$ (called the HL-Peiffer lifting),
  \item a function $\{-,-\}'_{HL} : H \times L \rightarrow M$ (called the HL'-Peiffer lifting),  and
  \item a function $\{-,-\}_{LL} : L \times L \rightarrow M$ (called the LL-Peiffer lifting),         
\end{itemize}
which are required to satisfy the following axioms:
\begin{enumerate}
  \item Each $\partial$ is homomorphism.
  \item  $\partial\circ \partial =0$.
  \item For each $g \in G$, $h \in H$, $l \in L$, and $m \in M$,
  \begin{align*}
\left\{
    \begin{aligned}
    & \partial({}^gh)= {{}^g}\partial(h) \\
    & \partial({}^gl)= {{}^g}\partial(l)  \\
    & \partial({}^gm)= {{}^g}\partial(m) .
    \end{aligned}
\right.
\end{align*}
  \item For each $h \in H$, $l \in L$, and $m \in M$,
  \begin{align*}
\left\{
    \begin{aligned}
    & \partial({}^hl)= {{}^h}\partial(l)  \\
    & \partial({}^hm)= {{}^h}\partial(m) .
    \end{aligned}
\right.
\end{align*}
  \item For each $l \in L$ and $m \in M$,
\[
\partial({}^lm)= {{}^l}\partial(m).
\]
  \item For each $g\in G$ and $h_1,h_2 \in H$,
  \[
   ^g \{h_2,h_1\} = \{^g h_2,^g h_1\} .
  \]
  \item For each $g\in G$ and $h_1,h_2,h_3 \in H$,
  \begin{align*}
\left\{
    \begin{aligned}
   &{}^g \{h_3,h_2,h_1\} = \{{}^g h_3,{}^g h_2, {}^g h_1\}\\
   &{}^g \{h_3,h_2,h_1\}' = \{{}^g h_3,{}^g h_2, {}^g h_1\}'. 
    \end{aligned}
\right.
\end{align*}
  \item For each $g\in G$ and $l_1,l_2 \in L$,
  \begin{align*}
{}^g \{l_2,l_1\}_{LL} = \{{}^g l_2,{}^g l_1\}_{LL}.
\end{align*}
 \item For each $h\in H$ and $l_1,l_2 \in L$,
  \begin{align*}
{}^h \{l_2,l_1\}_{LL} = \{{}^h l_2,{}^h l_1\}_{LL}.
\end{align*}

\item For each $l\in L$ and $m \in M$,
 \[
 \act{\del l}m = \act{l}m\{l,\partial m^{-1}\}_{LL}  .
 \] 
\label{act_u_ll_dellm}
\item For each $l\in L$ and $m \in M$,
 \[
 \act{l}m = m\{\partial m^{-1}, l\}_{LL} .
 \] 
\label{act_u_ll_lm}
  \item For each $m,m'\in M$,
  \[
  \act{\partial m}m' = mm'm^{-1} .
  \]
\label{act_delm_m_3cm}

  \item For each $h\in H$ and $l \in L$,
  \[
  \partial\{h,l\}_{HL}\act{h}l = l\{\partial l^{-1},h\}.
  \]
  \label{dhl}
  
    \item For each $h\in H$ and $l \in L$,
  \[
  \partial\{h,l\}'_{HL}\act{\partial h}l = \act{h}l\{h,\partial l^{-1}\}.
  \]
\label{dhlp}

  \item For each $g\in H$,$h\in H$, and $l \in L$,
  \begin{align*}
\left\{
    \begin{aligned}
{}^g \{h,l\}_{HL} &= \{{}^g h, {}^g l\}_{HL}\\
{}^g \{h,l\}_{HL}' &= \{{}^g h, {}^g l\}_{HL}'.
    \end{aligned}
\right.
\end{align*}

  \item For each $h\in H$ and $m \in M$,
 \[
 \act{h}m = m\{h,\partial m^{-1}\}_{HL}  .
 \] 
\label{act_h_m_3cm}

 \item For each $l,l'\in L$,
 \[
\partial\{l,l'\}_{LL}\act{\partial l}l' = ll'l^{-1}.
 \]
 \label{dll}
  \item  For each $h_1,h_2, h_3 \in L$,
  \[
  \{l_3l_2,l_1\}_{LL}={^{l_3}}\{l_2, l_1\}_{LL}\{l_3, {^{\partial l_2}}l_1 \}_{LL}.
  \] 
  \label{lll}
  \item For each $l_1,l_2, l_3 \in L$,
  \[
  \{l_3,l_2 l_1\}_{LL}=\{l_3, l_2\}_{LL} {^{(^{\partial l_3} l_2)}}\{l_3, l_1\}_{LL}.
  \] 
  \label{rll}
  \item For each $h\in H$ and $m \in M$,
 \[
 \act{\partial h}m = \act{h}m\{h,\partial m^{-1}\}'_{HL}  .
 \] 
 \label{dhmnl}
  \item For each $h_1,h_2,h_3 \in H$,
\[
\{h_3h_2,h_1\} = \partial\{h_3,h_2,h_1\}\ {{}^{h_3}\{h_2,h_1\}}\{h_3, {}^{\partial h_2}h_1\}.
\]  
\label{dhmnr}
  \item For each $h_1,h_2,h_3 \in H$,
\[
\{h_3,h_2h_1\} = \partial\{h_3,h_2,h_1\}' \{h_3,h_2\}{}^{\left({}^{\partial h_3}h_2\right) }\{h_3, h_1\}.
\]
  \item For each $h_1,h_2 \in H$ and $l \in L$,
\[
\{h_2h_1,l\}_{HL} = \act{l}\{\partial l^{-1},h_2,h_1\}'\{h_2,l\}_{HL} \act{h_2}\{h_1,l\}_{HL}  .
\]  
\label{rl:hhl}

  \item For each $h \in H$ and $l_1,l_2 \in L$,
\[
\{h,l_2l_1\}_{HL} = \act{l_2l_1}\{\partial l_1^{-1},\partial l_2^{-1},h\}\act{l_2l_1}\{l_1^{-1},\{\partial l_2^{-1},h\}\}_{LL} ^{-1}\{h,l_2\}_{HL} \act{\left(\act{h}l_2\right)}\{h,l_1\}_{HL}  .
\] 
  \item For each $h_1,h_2 \in H$ and $l \in L$,
\[
\{h_2h_1,l\}'_{HL} = \act{\left(\act{h_2h_1}l\right)}\{h_2,h_1,\partial l^{-1}\}\act{h_2}\{h_1,l\}'_{HL}\{h_2,\act{\partial h_1}l\}'_{HL} .
\]   
  \item For each $h \in H$ and $l_1,l_2 \in L$,
\[
\{h,l_2l_1\}'_{HL} = \act{\left(\act{h}(l_2l_1)\right)}\{h,\partial l_1^{-1},\partial l_2^{-1}\}'\act{\left(\act{h}l_2\right)}\{h,l_1\}'_{HL}\act{\left(\act{h}l_2\act{\partial h}l_1\right)}\{\act{\partial h}l_1^{-1},\{h,\partial l_2^{-1}\}\}_{LL}^{-1}\{h,l_1\}'_{HL}  .
\] 
\label{rrHL}
 \item For each $h_1,h_2,h_3,h_4 \in H$,
 \[
 \{h_4h_3,h_2,h_1\}\act{\left(\act{h_4h_3}\{h_2,h_1\}\right)}\{h_4,h_3,\act{\partial h_2}h_1\} = \{h_4,h_3h_2,h_1\}\act{h_4}\{h_3,h_2,h_1\} .
 \]
  \label{lhmnr}
  
  \item For each $h_1,h_2,h_3,h_4 \in H$,
 \[
\{h_4,h_3h_2,h_1\}'\{h_4,h_3,h_2\}' = \{h_4,h_3,h_2h_1\}'\act{\{h_4,h_3\}}(\act{\left(\act{\partial h_4}h_3\right)}\{h_4,h_2,h_1\}') .
 \]
 \label{rhmnr}

    \item For each $h_1,h_2,h_3,h_4 \in H$,
 \begin{align*}
&\{h_4,h_3,h_2h_1\}\act{\left(\act{h_4}\{h_3,h_2h_1\}\right)}\{h_4,\act{\partial h_3}h_2,\act{\partial h_3}h_1\}'\act{h_4}\{h_3,h_2,h_1\}' \\
  &= \{h_4h_3,h_2,h_1\}'\act{\{h_4h_3,h_2\}}(\act{\act{\partial (h_4h_3)}h_2}\{h_4,h_3,h_1\})\{h_4,h_3,h_2\} \\
   &\hspace{1em}\times\act{\left(\act{h_4}\{h_3,h_2\}\right)}\{\{h_4,\act{\partial h_3}h_2\},\act{\left(\act{\partial(h_4h_3)}h_2h_4\right)}\{h_3,h_1\}\}_{LL}.
\end{align*}
 \label{lrhmnr}
 
 %--------------
  \item For each $h_1,h_2,h_3 \in H$,
  \label{yanBt}
 \begin{align}
  \nonumber
 &\act{\act{h_3}\{h_2,h_1\}}(\{h_3,\act{\partial h_2}h_1,h_2\}'^{-1}\{h_3,\partial\{h_2,h_1\}^{-1},h_2h_1\}')\{h_3,\{h_2,h_1\}\}' \\
  \nonumber
  \hspace{1em} &\{\act{\partial h_3}\{h_2,h_1\},\act{\partial\act{\partial h_3}\{h_2,h_1\}^{-1}}\{h_3,h_2h_1\}\}\{h_3,h_2,h_1\}' \\
  \nonumber
  &= \{h_3,h_2,h_1\}^{-1}\act{\{h_3h_2,h_1\}}\{\act{\partial(h_3h_2)}h_1,\{h_3,h_2\}\}^{-1}\{\{h_3,h_2\},\act{\partial\{h_3,h_2\}^{-1}}\{h_3h_2,h_1\}\}^{-1} \\
  \nonumber
  &\hspace{1em} \act{\{h_3,h_2\}}(\{\partial\{h_3,h_2\}^{-1},h_3h_2,h_1\}^{-1}\{\act{\partial h_3}h_2,h_3,h_1\})
\end{align}

 %-------------

    \item For each $l,l' \in L$,
    \[
    \act{l}\{\partial l',l^{-1}\}_{HL}\act{ll'l^{-1}l'^{-1}}\{l',l\}_{LL}= \act{\left(\act{\partial l}l'\right)}\{\partial l,l'^{-1}\}'_{HL}\{l,l'\}^{-1}_{LL}.
    \]
     \item For each $l\in L$ and $m \in M$,
    \[
    \{\partial m,l\}_{LL}\{l,\partial m\}_{LL} = \{\partial l,\partial m\}_{HL} = \{\partial l,\partial m\}'^{-1}_{HL}.
    \]
    \label{rllandhl}
        \item For each $m,m' \in M$,
    \[
    \{\partial m,\partial m'\}_{LL} = mm'm^{-1}m'^{-1}.
    \]   
\label{3cm_not_need}
\end{enumerate}
\end{dfn}

\begin{rem}
Properties \ref{rllandhl} and \ref{3cm_not_need} can be derived from the other axioms of a 3-crossed module. Specifically, property \ref{rllandhl} is obtained by applying properties \ref{act_u_ll_dellm} and \ref{act_u_ll_lm} to the term $\act{\del l}m$, along with property \ref{act_h_m_3cm} in $\act{\del l}m$ and property \ref{dhmnl} in $\act{\del \circ \del l}m$. Similarly, property \ref{3cm_not_need} follows from properties \ref{act_u_ll_lm} and \ref{act_delm_m_3cm} applied to $\act{\del m'}m$. We include these properties explicitly for the reader's convenience.
\end{rem}

There are six types of liftings. One is the normal Peiffer lifting. The LL-Peiffer lifting corresponds to the normal Peiffer lifting for the complex of groups \( M \xrightarrow{\partial} L \xrightarrow{\partial} H \).
The HL-Peiffer lifting, HL'-Peiffer lifting, and the left and right Homanian are new types of liftings that arise specifically in 3-crossed modules.
As can be seen in the 3-crossed module property~\ref{dhl}, the HL-Peiffer lifting is a twisted version of the 2-crossed module property~\ref{acthl}. According to the 3-crossed module property~\ref{dhlp}, the HL'-Peiffer lifting is a twist of the 2-crossed module property~\ref{actgl}.
The left Homanian corresponds to a twist of the relation in the 2-crossed module property~\ref{lpl}, while the right Homanian corresponds to a twist of the relation in property~\ref{rpl}.

Some of the relations in a 3-crossed module can be understood through diagrams that can be described in the language of category theory.  
See Appendix~\ref{app_dia_3cm} for details.

%start lemma
We investigate the behavior of liftings when one of the arguments is the unit element.
The following lemma describes this situation.
\begin{lemma}
\label{lemma_of_unit_element}
Let $G,H,L,M$ be groups, and let
$
 M\xrightarrow{\partial}L \xrightarrow{\partial} H \xrightarrow{\partial} G 
$ be 3-crossed module. Then the six types of liftings satisfy the following equations:
\begin{align}
\{h_2, h_1\}
  &= e_L
  && \text{if $h_2 = e_H$ or $h_1 = e_H$,}
  \label{eq:einpl} \\
 \{l_2, l_1\}
  &= e_M
  && \text{if $l_2 = e_L$ or $l_1 = e_L$,}
  \label{eq:einll} \\
\{h_3, h_2, h_1\}
  &= \{h_3, h_2, h_1\}' = e_M
  && \text{if $h_3 = e_H$ or $h_2 = e_H$ or $h_1 = e_H$,}
  \label{eq:einhmn} \\
\{h, l\}_{HL}
  &= \{h, l\}'_{HL} = e_M
  && \text{if $h = e_H$ or $l = e_L$.}
  \label{eq:einhl}
\end{align}
Here, $e_X$ denotes the unit element of the group $X$. 
\end{lemma}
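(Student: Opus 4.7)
My plan is to prove the four identities by an ordered bootstrap, handling the liftings in order of increasing dependency on the other axioms.

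The first batch follows by direct substitution. Putting $l_3 = e_L$ into axiom 16 gives $\{l_2, l_1\}_{LL} = \{l_2, l_1\}_{LL} \cdot \{e_L, \act{\partial l_2} l_1\}_{LL}$, so $\{e_L, l\}_{LL} = e_M$; the symmetric vanishing comes from axiom 17. Substituting $m = e_M$ into axioms 14 and 18 immediately yields $\{h, e_L\}_{HL} = \{h, e_L\}'_{HL} = e_M$, since $\partial e_M = e_L$ and every action fixes the identity. Combining these with axioms 11 and 12 at $l = e_L$, which collapse to $\partial\{h, e_L\}_{HL} = \{e_H, h\}$ and $\partial\{h, e_L\}'_{HL} = \{h, e_H\}$, yields $\{e_H, h\} = \{h, e_H\} = e_L$, establishing equation (1).

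For the Homanians the plan is to bootstrap outward from $\mathrm{im}(\partial)$. Specialising unit arguments in axioms 19 and 20 and using equation (1) shows every Homanian with a unit argument lies in $\ker(\partial : M \to L)$, hence is central in $M$ by axiom 10. Substituting $h = e_H$ together with $l_2 = e_L$ (respectively $l_1 = e_L$) into axiom 22, and letting the vanishings already established collapse the auxiliary HL-, LL- and Peiffer-terms, isolates $\{\partial l, e_H, h\} = \{e_H, \partial l, h\} = e_M$ for all $l, h$; in particular $\{h_4, e_H, e_H\} = \{e_H, e_H, h_1\} = e_M$. The cocycle relation from axiom 25 at $h_2 = e_H$ then reads $\{h_4 h_3, e_H, h_1\} \cdot \{h_4, h_3, h_1\} = \{h_4, h_3, h_1\} \cdot \act{h_4}\{h_3, e_H, h_1\}$, from which centrality of the left-most factor gives $\{h_4, e_H, h_1\} = \act{h_4}\{e_H, e_H, h_1\} = e_M$ for all $h_4, h_1$. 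Running axiom 25 at $h_3 = e_H$ instead, now with $\{h_4, e_H, \cdot\} = e_M$ available, forces $\act{h_4}\{e_H, h_2, h_1\} = e_M$ and hence $\{e_H, h_2, h_1\} = e_M$. For the last position I would use axiom 25 at $h_1 = e_H$ together with $\{\partial l, e_H, e_H\} = e_M$ and the Yang-Baxter-like identity 27 at $h_1 = e_H$, whose residual terms collapse under the previous vanishings to produce a recursion pinning down $\{h_4, h_3, e_H\} = e_M$. Parallel reasoning with axioms 24, 26 and the yanBt identity 27 handles the right Homanian, completing equation (3).

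Finally, axioms 21 and 23 at $h_2 = h_1 = e_H$ reduce, using the Homanian vanishings just established, to identities of the form $\{e_H, l\}_{HL} = \{e_H, l\}_{HL}^2$ and its HL' analogue, forcing both to equal $e_M$ and completing equation (4). The most delicate part of the argument is the last slot of the Homanian: the cocycle axioms 25, 26 on their own only characterise $\{h, h', e_H\}$ and $\{h, h', e_H\}'$ as central $\ker\partial$-valued $2$-cocycles, and closing this gap requires the yanBt identity, whose many terms must be tracked carefully to see that every factor other than the target Homanian collapses under the previously established vanishings.
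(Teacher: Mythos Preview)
Your first batch is cleaner than the paper's: extracting $\{h,e_L\}_{HL}=\{h,e_L\}'_{HL}=e_M$ directly from axioms~14 and~18 at $m=e_M$, and then reading off the Peiffer vanishings from axioms~11 and~12, is more direct than the paper's route. The paper instead first proves the seed Homanian identities $\{e_H,e_H,h\}=e_M$ and $\{h,e_H,e_H\}'=e_M$ by expanding ${}^h(mm')$ and ${}^{\partial h}(mm')$ via axioms~14, 18, 22, 24 and setting $m=m'=e_M$; only afterwards does it deduce the Peiffer and HL vanishings from axioms~19--24.

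For the Homanians your bootstrap via centrality and axiom~25 is essentially what the paper does, but two points need correction. First, after axiom~22 you only have $\{\partial l,e_H,h\}=e_M$, so your ``in particular $\{h_4,e_H,e_H\}=e_M$'' is unjustified for general $h_4$; fortunately the next step only needs $\{e_H,e_H,h_1\}=e_M$, which does follow. Second, and more substantively, the last slot $\{h_4,h_3,e_H\}=e_M$ is not obtained from axiom~25 at $h_1=e_H$ plus ``yanBt'' (which is Property~28 in the paper). The paper uses Property~27 (the mixed left/right relation, \emph{lrhmnr}) at $h_2=h_1=e_H$, which collapses to $\{h_4,h_3,e_H\}=\{h_4,h_3,e_H\}^2$ \emph{provided} one already has the right-Homanian seed $\{h,e_H,e_H\}'=e_M$. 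So the ``parallel reasoning'' for the right Homanian cannot be deferred: you must first extract $\{h,e_H,e_H\}'=e_M$ (either from axiom~24 at $l_2=e_L$, mirroring your axiom-22 argument, or from the paper's ${}^{\partial h}(mm')$ computation), then bootstrap the right Homanian via axiom~26, and only then close the left-Homanian last slot via Property~27. Axiom~25 alone at $h_1=e_H$ yields merely a normalized $2$-cocycle condition on $(h_4,h_3)\mapsto\{h_4,h_3,e_H\}$, which does not force vanishing.
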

\begin{proof}
Equation~(\ref{eq:einll}) follows from Properties \ref{lll} and \ref{rll} of a 3-crossed module applied to $\{e_L e_L, l\}_{LL}$ and $\{l, e_L e_L\}_{LL}$. Indeed,
\begin{equation}
\begin{aligned}
\{e_L e_L , l\}_{LL} &= \{e_L  , l\}_{LL}\{e_L , l\}_{LL},\\
\{l, e_L e_L \}_{LL} &= \{l, e_L\}_{LL} \{l, e_L\}_{LL}.
\end{aligned}
\label{p_einll}
\end{equation}

We next show that, for each $h\in H$,
\begin{equation}
\begin{aligned}
\{e_H, e_H, h\} = e_M,\\
\{h, e_H, e_H\}' = e_M.
\label{hyp_of_unit}
\end{aligned}
\end{equation}
Equation~(\ref{hyp_of_unit}) is obtained by computing $\act{h}(mm')$ and $\act{\del h}(mm')$ and then setting $m=m'=e_M$. We compute $\act{h}(mm')$ as follows:
\begin{align}
  \nonumber
  \act{h}(mm') &= mm'\{h,\partial (mm')^{-1}\}_{HL} \\
  \nonumber
  &= mm'\act{\partial (mm)'^{-1}}\{e_H,e_H,h\}\act{\partial (mm')^{-1}}\{\partial m,\{e_H,h\}\}_{LL}^{-1}\{h,\partial m'^{-1}\}_{HL}\act{\partial \act{h}m'^{-1}}\{h,\partial m^{-1}\}_{HL} \\
  \nonumber
  &= \{e_H,e_H,h\}\{\partial m,\{e_H,h\}\}_{LL}^{-1}mm'\{h,\partial m'^{-1}\}_{HL}\act{\partial \act{h}m'^{-1}}\{h,\partial m^{-1}\}_{HL} \\
  \label{eq:act_hmm}
  &= \{e_H,e_H,h\}\{\partial m,\{e_H,h\}\}_{LL}^{-1}m\act{h}m'\act{\partial \act{h}m'^{-1}}\{h,\partial m^{-1}\}_{HL} \\
  \nonumber
  &= \{e_H,e_H,h\}\{\partial m,\{e_H,h\}\}_{LL}^{-1}m\{h,\partial m^{-1}\}_{HL}\act{h}m' \\
  \nonumber
  &= \{e_H,e_H,h\}\{\partial m,\{e_H,h\}\}_{LL}^{-1}\act{h}m\act{h}m'.
\end{align}
Similarly, $\act{\del h}(mm')$ can be computed in an analogous manner:
\begin{align}
  \nonumber
  \act{\partial h}(mm') &= \act{h}(mm')\{h,\partial (mm')^{-1}\}_{HL} ' \\
  \nonumber
  &= \act{h}(mm')\act{\partial \act{h}(mm')^{-1}}\{h,e_H,e_H\}'\act{\partial \act{h}m'^{-1}}\{h,\partial m^{-1}\}_{HL} '\act{\partial (\act{h}m'^{-1}\act{\partial h}m^{-1})}\{\partial m,\{h,e_H\}\}_{LL} ^{-1}\{h,\partial m'^{-1}\}_{HL} ' \\
  \nonumber
  &= \{h,e_H,e_H\}'\act{h}m\{h,\partial m^{-1}\}_{HL}'\act{\partial (\act{\partial h}m^{-1})}\{\partial m,\{h,e_H\}\}_{LL}^{-1}\act{h}m'\{h,\partial m'^{-1}\}_{HL}' \\
  \label{eq:act_gmm}
  &= \{h,e_H,e_H\}'\act{\partial h}m\act{\partial (\act{\partial h}m^{-1})}\{\partial m,\{h,e_H\}\}_{LL}^{-1}\act{\partial h}m'.
\end{align}
Using Equations~(\ref{eq:act_hmm}) and~(\ref{eq:act_gmm}), together with $m=m'=e_M$ and Equation~(\ref{eq:einll}), we obtain Equation~(\ref{hyp_of_unit}).

Next, by substituting the unit element appropriately into Properties \ref{dhmnr}-\ref{rrHL} of a 3-crossed module, we obtain Equations~(\ref{eq:P11h1h})-(\ref{eq:h11h1}):
\begin{align}
  \label{eq:P11h1h}
  e_L &= \partial\{e_H,e_H,h\}\{e_H,h\}, \\
  \label{eq:Ph11h1}
  e_L &= \partial\{h,e_H,e_H\}'\{h,e_H\}, \\
  \label{eq:Pl111l}
  e_M &= \act{l}\{\partial l^{-1},e_H,e_H\}'\{e_H,l\}_{HL}, \\
  \label{eq:11hh1}
  e_M &= \{e_H,e_H,h\}\{h,e_L\}_{HL},  \\
  \label{eq:11Pl1l}
  e_M &= \act{l}\{e_H,e_H,\partial l^{-1}\}\{e_H,l\}'_{HL},  \\
  \label{eq:h11h1}
  e_M &= \{h,e_L,e_L\}'\{h,e_L\}'_{HL}. 
\end{align}
Using Equations~(\ref{hyp_of_unit}) and (\ref{eq:P11h1h})-(\ref{eq:h11h1}), we obtain Equations~(\ref{eq:einpl}) and~(\ref{eq:einhl}) . 

Now, by setting $h_3=h_2=h_1=e_H$ and $h_4=h_3=h_2=e_H$ in Proprety~\ref{lhmnr} of 3-crossed module, we obtain Equations~(\ref{2ulhmn1}) and (\ref{2ulhmn2}), respectively:
\begin{align}
\label{2ulhmn1}
\{h, e_H, e_H \} = e_M,\\ 
\label{2ulhmn2}
\{e_H, h, e_H \} = e_M.
\end{align}
Similarly, setting $h_4=h_3=h_2=e_H$ and $h_4=h_2=h_1=e_H$ in Proprety~\ref{rhmnr} of a 3-crossed module yields Equations~(\ref{2urhmn1}) and (\ref{2urhmn2}), respectively:
\begin{align}
\label{2urhmn1}
\{e_H, h, e_H \}' = e_M,\\ 
\label{2urhmn2}
\{e_H, e_H, h \}' = e_M.
\end{align}
Using Equations~(\ref{eq:einpl}), (\ref{eq:einll}), (\ref{eq:einhl}), (\ref{hyp_of_unit}), and (\ref{2ulhmn1})-(\ref{2urhmn2}) where necessary, and applying Property~\ref{lhmnr} of a 3-crossed module with $h_4 = h_3 = e_H$ and with $h_3= h_2 = e_H$, we obtain Equation~(\ref{1ulhmn1}) and (\ref{1ulhmn2}), respectively:
  \begin{align}
\label{1ulhmn1}
\{e_H, h_2, h_1 \} = e_M,\\ 
\label{1ulhmn2}
\{h_4, e_H, h_1 \} = e_M.
\end{align}

Similarly, applying Property~\ref{rhmnr} of a 3-crossed module with $h_3 = h_2 = e_H$ and with $h_2= h_1 = e_H$, we obtain Equation~(\ref{1urhmn1}) and (\ref{1urhmn2}), respectively:
\begin{align}
\label{1urhmn1}
\{h_4, e_H, h_1 \}' = e_M,\\ 
\label{1urhmn2}
\{h_4, h_3, e_H \}' = e_M.
\end{align}

Finaly, applying Property~\ref{lrhmnr} of a 3-crossed module with $h_2 = h_1 = e_H$ and with $h_4= h_3 = e_H$, we obtain Equation~(\ref{1ulhmn3}) and (\ref{1urhmn3}), respectively:
\begin{align}
\label{1ulhmn3}
\{h_4, h_3, e_H \} = e_M,\\ 
\label{1urhmn3}
\{e_H, h_2, h_1 \}' = e_M.
\end{align}
This completes the proof.

\end{proof}

These twists naturally appear in the simplicial set associated with a 3-crossed module.  
We now construct the simplicial set associated with a 3-crossed module. This can be done by adding additional structure to the simplicial set associated with a 2-crossed module.

Let \( T \coloneqq  (M \xrightarrow{\partial} L \xrightarrow{\partial} H \xrightarrow{\partial} G) \) be a 3-crossed module.  
The simplicial set \( M_T \) associated with this 3-crossed module consists of the following data:

\begin{itemize}
    \item For each $[n] \in \Delta$, the set $M_W([n])$ consists of quadruples 
    \[ (g(-,-), h(-,-,-), l(-,-,-,-),m(-,-,-,-,-)) ,\]
    where:
    \begin{enumerate}
        \item $g: [n] \times_{\le} [n] \to G$ is a function assigning each ordered pair in $[n]$ to an element of $G$,
        \item $h: [n] \times_{\le} [n] \times_{\le} [n] \to H$ is a function assigning each ordered triple in $[n]$ to an element of $H$,  
        \item $l: [n] \times_{\le} [n] \times_{\le} [n] \times_{\le} [n] \to L$ is a function assigning each ordered quadruple in $[n]$ to an element of $L$, and
        \item $m: [n] \times_{\le} [n] \times_{\le} [n] \times_{\le} [n] \times_{\le} [n] \to M$ is a function assigning each ordered quintuple in $[n]$ to an element of $M$,
    \end{enumerate}
    satisfying the following conditions:

\begin{enumerate}[label=(\alph*)]
\item for each $i \le j \le k$ in $[n]$, we have the idenntity 
\begin{equation}
g_{ik} = \partial(h_{ijk})g_{jk}g_{ij},
\label{r2-simplex m}
\end{equation}
\item for each $i \le j \le k \le p$ in $[n]$, we have the identity 
\begin{equation}
\partial (l_{ijkp})  h_{ikp}{^{g_{kp}}}h_{ijk} = h_{ijp}h_{jkp},
\label{r3-simplex m}
\end{equation}
\item for each $i \le j \le k \le p \le q$ in $[n]$, we have the idenntity 
\begin{equation}
 \partial(m_{ijkpq}) {^{h_{ijq}}}l_{jkpq}l_{ijpq}{^{h_{ipq}}}\left({^{g_{pq}}}l_{ijkp}\right) = l_{ijkq}{^{h_{ikq}}}\{h_{kpq},{^{g_{pq}g_{kq}}}h_{ijk} \}^{-1}l_{ikpq} ,
\label{r4-simplex m}
\end{equation}
\item for each $i \le j \le k  \le p \le q\le x$ in $[n]$, we have the idenntity 

\label{r5-simplex m}
\begin{equation}
\label{r5-simplex m:eq}
\begin{aligned}
   \act{L_D L_E}M_5 \cdot \act{L_D L_E L_F}M_4 &\cdot M_3 \cdot \act{L_A}M_2 \cdot  \act{L_A L_B L_C} M_1\\
   =
   \act{}M'_{13}
   &\cdot \act{L_K L_N}M'_{12}
   \cdot \act{L_K }M'_{11}
   \cdot \act{L_K L_L}M'_{10}
   \cdot \act{L_K L_L L_M}M'_9\\
  & \cdot \act{}M'_8 
   \cdot \act{L_G L'_I}M'_7
   \cdot \act{L_G L'_I}M'_6
   \cdot \act{L_G }M'_5\\
   &\cdot \act{L_G L_H L_I L_J}M'_4
   \cdot \act{L_G L_H}M'_3
   \cdot \act{}M'_2
   \cdot \act{L_A L_B}M'_1.
   \end{aligned}
   \end{equation}

Here each term will be written as follows:

\noindent
\begin{align*}
     L_A &= \act{h_{ijx}h_{jkx}}l_{kpqx}\\
   L_B &= \act{h_{ijx}}l_{jkqx}\\
   L_C &=l_{ijqx}\\
   L_D &=l_{ijkx}\\
   L_E &=\act{h_{ikx}(\act{g_{kx}}h_{ijk})}l_{kpqx}\\
   L_F &=\act{h_{ikq}}\{h_{kqx}, \act{g_{qx}g_{kq}}h_{ijk} \}^{-1}\\
   M_j &= \act{h_{iqx}}\left(\act{g_{qx}}m_{ijkpq} \right)\\
   M_k &= m_{ijkqx}\\
   M_p &= \{l_{ijkx}, \act{h_{ikx}(\act{g_{kx}}h_{ijk})}l_{kpqx} \}_{L,L}\\
   M_q &= \left\{l_{ikqx},\act{h_{iqx}(\act{g_{qx}}h_{ikq})}\{\act{g_{qx}}h_{kpq},\act{g_{qx}g_{pq}g_{kp}}h_{ijk} \} \right\}^{-1}_{L,L}\\
   M_x &=\act{h_{ikx}}\left(\act{\{h_{kqx},\act{g_{qx}g_{kq}}h_{ijk} \}^{-1}(\act{h_{kqx}}\{\act{g_{qx}}h_{kpq},\act{g_{qx}g_{pq}g_{kp}}h_{ijk} \}^{-1})} \{h_{kqx}, \act{g_{qx}}h_{kpq}, \act{g_{qx}g_{pq}g_{kp}}h_{ijk} \}_L^{-1}\right)
    \end{align*}
   
   \begin{align*}
   L_G &= \act{h_{ijx}}l_{jkpx}\\
   L_H &= \act{h_{ijx}h_{jpx}}\{h_{pqx},\act{g_{qx}g_{pq}}h_{jkp} \}^{-1}\\
   L_I &=L'_I =l_{ijpx}\\
   L_J &=\act{h_{ipx}}\{h_{pqx}, \act{g_{qx}g_{pq}}h_{ijp}\}^{-1} \\
   L_K &=l_{ijkx} \\
   L_L &=\act{h_{ikx}}\{h_{kpx}, \act{g_{px}g_{kp}}h_{ijk} \}^{-1}\\
   L_M &= l_{ikpx} \\
   L_N &= \act{h_{ikx}}\{h_{kpx}h_{pqx}, \act{g_{qx}g_{px}g_{kp}}h_{ijk} \}^{-1}\\
   M'_j &= \{l_{ijqx}, \act{h_{iqx}(\act{g_{qx}h_{ijq}})}(\act{g_{qx}}l_{jkpq}) \}^{-1}_{L,L}\\
   M'_k &= m_{jkpqx}\\
   M'_p &= m_{ijpqx}\\
   M'_q &= \{l_{ipqx}, \act{h_{iqx}(\act{g_{qx}h_{ipq}})}(\act{g_{qx}g_{pq}}l_{ijkp}) \}^{-1}_{L,L}\\
   M'_x &=\{l_{ijpx}, \act{h_{ipx}(\act{g_{px}h_{ijq}})}\{h_{pqx} \act{g_{qx}g_{pq} }h_{jkp}  \}\}_{L,L} \\
   M'_6 &=\act{h_{ipx}}\left(\act{(\act{(\act{g_{px}}h_{ijp})}\{h_{pqx}, \act{g_{qx}g_{pq}}h_{jkp} \}^{-1}\{h_{pqx}, \act{g_{qx}g_{pq}}h_{ijp} \}^{-1} ) }\{h_{pqx}, \act{g_{qx}g_{pq}}h_{ijp},\act{g_{qx}g_{pq}}h_{jkp} \}'^{-1}_{R}  \right)\\
   M'_7 &= \act{  (\act{g_{px}}l_{ijkp}\{h_{pqx}, \act{g_{qx}g_{pq}}(h_{ikp}\act{g_{kp}}h_{ijk}) \}^{-1})      }
  \left(  \act{(\act{h_{pqx}}(\act{g_{qx}g_{pq}}l_{ijkp}))}\{h_{pqx}, \act{g_{qx}g_{pq}}(\del l_{ijkp}),\act{g_{qx}g_{pq}}(h_{ikp}\act{g_{kp}}h_{ijk}) \}\right) \\
  &\ \ \ \ \cdot \act{  (\act{g_{px}}l_{ijkp}\{h_{pqx}, \act{g_{qx}g_{pq}}(h_{ikp}\act{g_{kp}}h_{ijk}) \}^{-1})      }\left(\{h_{pqx}, \act{g_{qx}g_{pq}}l_{ijkp}^{-1}\}'_{HL}
  \right)    \\
  &\ \ \ \ \cdot \{ \act{g_{px}}l_{ijkp}, \{h_{pqx}, \act{g_{qx}g_{pq}}(h_{ikp}\act{g_{kp}}h_{ijk}) \}^{-1}\}_{L,L}\\
     M'_8 &=m_{ijkpx}\\
   M'_9 &= \act{h_{ipx}}\left(\act{(\act{(\act{g_{px}}h_{ikp})}\{h_{pqx}, \act{g_{qx}g_{pq}g_{kp}}h_{ijk} \}^{-1}\{h_{pqx}, \act{g_{qx}g_{pq}}h_{ikp} \}^{-1} ) }\{h_{pqx}, \act{g_{qx}g_{pq}}h_{ikp},\act{g_{qx}g_{pq}g_{kp}}h_{ijk} \}'_{R}  \right)\\
     M'_ji &=\{l_{ikpx}, \act{h_{ipx}(\act{g_{px}}h_{ikp})}\{h_{pqx}, \act{g_{qx}g_{pq}g_{kp}}h_{ijk}\}^{-1} \}_{L,L}^{-1}\\
        M'_jj &=\act{h_{ikx}}\left(\act{\{h_{kpx},\act{g_{px}g_{kp}}h_{ijk} \}^{-1}(\act{h_{kpx}}\{h_{pqx},\act{g_{qx}g_{pq}g_{kp}}h_{ijk} \}^{-1})} \{h_{kpx}, h_{pqx}, \act{g_{qx}g_{pq}g_{kp}}h_{ijk} \}_L^{-1}\right)\\
           M'_jk &=m_{ikpqx}^{-1}\\
           M'_jp &= \act{ \act{(\act{g_{px}}h_{ijk})}l_{kpqx}\{h_{kqx}(\act{g_{qx}}h_{kpq}),\act{g_{qx}g_{pq}g_{kp}}h_{ijk}\}^{-1}l_{kpqx}^{-1}}\{\del l_{kpqx}, h_{kqx}\act{g_{qx}}h_{kpq}, \act{g_{qx}g_{pq}g_{kp}}h_{ijk} \} \\
            &\ \ \ \ \cdot \act{\left(\act{(\act{g_{px}}h_{ijk}) }l_{kpqx}\right)}\left( \act{l_{kpqx}^{-1}}\{l_{kpqx},\{h_{kqx}(\act{g_{qx}}h_{kpq}),\act{g_{qx}g_{pq}g_{kp}}h_{ijkp}  \} \}_{L,L}\cdot \{ \act{g_{kx}}h_{ijk}, l_{kpqx}^{-1}\}_{HL}   \right)
    \end{align*}

 \item For each $i \in [n]$, we have $g_{ii} = e_G$,
        
 \item For each $i \le j$ in $[n]$, we have $h_{iij} = h_{ijj} = e_H$,
        
 \item For each $i \le j \le k$ in $[n]$, we have $l_{iijk} = l_{ijjk} = l_{ijkk} = e_L$.
 
 \item For each $i \le j \le k \le p$ in $[n]$, we have $m_{iijkp} = m_{ijjkp} = m_{ijkkp} = m_{ijkpp}= e_m$.
    \end{enumerate}
    
  Here, we abbreviate $g(i,j)$, $h(i,j,k)$, and $l(i,j,k,m)$ as $g_{ij}$, $h_{ijk}$, and $l_{ijkm}$, respectively.  If you want to understand the reasoning behind equations~(\ref{r2-simplex m})–(\ref{r5-simplex m:eq}), see Appendix~\ref{cocycle} for details.
    
\item For a simplicial operater $\delta : [m] \rightarrow [n]$, $M_T(\delta): (M_T)_n \rightarrow (M_T)_m$ is defined by: for each $(g,h,l,m) \in (M_T)_n$, $M_T(\delta)$,
\[
M_T(\delta)
\begin{pmatrix}
g(-,-)\\
h(-,-,-)\\
l(-,-,-,-)\\
m(-,-,-,-,-)\\
\end{pmatrix}
\coloneqq
\begin{pmatrix}
g(\delta(-),\delta(-))\\
h(\delta(-),\delta(-),\delta(-))\\
l(\delta(-),\delta(-),\delta(-),\delta(-))\\
m(\delta(-),\delta(-),\delta(-),\delta(-),\delta(-))
\end{pmatrix}
.\]
\end{itemize}

The simplicial set associated with a 3-crossed module is also a quasi-category.  
The proof is similar to the proof for the simplicial set associated with a 2-crossed module.
With a slight extension, the argument of Lemma~\ref{lemma1} also applies to \( M_T \).

\begin{lemma}
\label{lemma1_3cm}
Let \( T \coloneqq (M \xrightarrow{\partial} L \xrightarrow{\partial} H \xrightarrow{\partial} G \) be a 3-crossed module. For any \( n \ge 1 \) and any inner horn \( \Lambda^n_j \), let \( \phi \in \operatorname{Hom}(\Lambda^n_j, M_W) \). Then, for any \( k \ge 1 \) and any \( f \in (\Lambda^n_j)_k \), the following identity holds:
\[
\phi_k(f) 
\coloneqq
\begin{pmatrix}
\phi^g_k(f)(-,-) \\
\phi^h_k(f)(-,-,-) \\
\phi^l_k(f)(-,-,-,-)\\
\phi^m_k(f)(-,-,-,-,-)
\end{pmatrix}
=
\begin{pmatrix}
\phi^g_1(f(-,-))(0,1) \\
\phi^h_2(f(-,-,-))(0,1,2) \\
\phi^l_3(f(-,-,-,-))(0,1,2,3)\\
\phi^m_4(f(-,-,-,-))(0,1,2,3,4)
\end{pmatrix}.
\]
\end{lemma}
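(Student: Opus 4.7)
The plan is to reuse the argument of Lemma~\ref{lemma1} verbatim for the three components $\phi^g$, $\phi^h$, $\phi^l$, and to add one parallel calculation for the new component $\phi^m$. The underlying principle is unchanged: naturality of $\phi \in \operatorname{Hom}(\Lambda^n_j, M_T)$ with respect to the simplicial operator sending $s \mapsto i_s$ identifies $\phi_k(f)$ evaluated at an ordered tuple $(i_0,\ldots,i_r)$ in $[k]$ with $\phi_r$ evaluated at $(0,1,\ldots,r)$ on the pulled-back simplex. Since the definition of $M_T$ on simplicial operators is again by precomposition, exactly as for $M_W$, the same naturality square does the job.

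Concretely, I would first restate the three cases from Lemma~\ref{lemma1} unchanged, using $\langle ij \rangle : [1] \to [k]$, $\langle ijp \rangle : [2] \to [k]$, and $\langle ijkp \rangle : [3] \to [k]$ for the components $\phi^g$, $\phi^h$, and $\phi^l$ respectively. The only genuinely new step is the case of $\phi^m$, for which I would introduce the monotone map
\[
\langle ijkpq \rangle : [4] \to [k], \qquad 0 \mapsto i,\ 1 \mapsto j,\ 2 \mapsto k,\ 3 \mapsto p,\ 4 \mapsto q,
\]
and form the corresponding naturality square
\[
\phi_4 \circ \Lambda^n_j(\langle ijkpq \rangle) \;=\; M_T(\langle ijkpq \rangle) \circ \phi_k .
\]
Chasing the simplex $f \in (\Lambda^n_j)_k$ around this square and reading off the $M$-component yields
\[
\phi^m_k(f)(i,j,k,p,q) \;=\; \phi^m_4\bigl(f(i,j,k,p,q)\bigr)(0,1,2,3,4),
\]
which is the missing identity.

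I do not expect any real obstacle here. The argument is purely formal: it uses only that $\phi$ is a natural transformation of simplicial sets and that $M_T$ acts on simplicial operators by precomposition. In particular, it invokes none of the 3-crossed-module axioms, none of the six liftings, and none of the coherence conditions (a)--(f) imposed on the quadruples $(g,h,l,m)$; the new relation \eqref{r5-simplex m:eq} plays no role. For the same reason, the analogue of the corollary following Lemma~\ref{lemma1} (with $\Lambda^n_j$ replaced by $\Delta^n$) will follow at once by the identical diagram chase, which is presumably the use to which this lemma will be put in the $M_T$ version of Theorem~\ref{2cm and qusi}.
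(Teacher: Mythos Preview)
Your proposal is correct and matches the paper's approach exactly: the paper's proof consists solely of the sentence ``The proof is similar to that of Lemma~\ref{lemma1},'' and what you have written out is precisely the intended unpacking, namely the naturality square of Lemma~\ref{lemma1} applied componentwise with the one additional case $\langle ijkpq\rangle:[4]\to[k]$ for $\phi^m$.
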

\begin{proof}
The proof is similar to that of Lemma~\ref{lemma1}.
\end{proof}

By using Lemmas~\ref{lemma2} and~\ref{lemma1_3cm}, we can prove $M_T$ is quasi-category in similar way of $M_S$.

\begin{theorem}
Let \( T \coloneqq (M \xrightarrow{\partial} L \xrightarrow{\partial} H \xrightarrow{\partial} G, \{-,-\}) \) be a 3-crossed module. Then the simplicial set $M_T$ is a quasi-category.
\end{theorem}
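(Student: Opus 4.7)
The plan is to extend the strategy of Theorem~\ref{2cm and qusi} to the four-level setting. First, Lemma~\ref{lemma1_3cm} together with a direct analog of Lemma~\ref{lemma2} for $M_T$ (whose proof is identical, since $\phi^m$ on a degenerate 4-simplex reduces by naturality to an $\phi^m_k$ for $k \le 3$, which must be an identity element by the degeneracy conditions (f)--(h)) reduces the horn-filling problem to constructing $\tilde{\psi}$ on non-degenerate simplices of dimension at most $4$. Thus to extend any $\psi \in \operatorname{Hom}(\Lambda^n_j, M_T)$ to $\tilde{\psi} \in \operatorname{Hom}(\Delta^n, M_T)$, it suffices to define the missing values $\tilde{\psi}_k$ for $k = 1,2,3,4$ and verify that the relations~(\ref{r2-simplex m})--(\ref{r5-simplex m:eq}) together with the degeneracy conditions hold on the resulting data.

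The argument splits into the cases $n \in \{2,3,4,5\}$ and $n \ge 6$. For $n = 2, 3$, the construction mirrors the corresponding cases of Theorem~\ref{2cm and qusi}, and no $m$-data is involved. For $n = 4$, the natural choice is to set the $m$-value on the new $4$-simplex $\langle 0123 \hat{i} 4 \rangle$ (or rather the full $4$-simplex $\langle 01234\rangle$) to $e_M$; by Lemma~\ref{lemma_of_unit_element} this trivialises the $\partial(m)$ factor in~(\ref{r4-simplex m}) so that the equation collapses to the 2-crossed-module $4$-simplex identity, from which the missing $l$-value is determined exactly as in case~3 of Theorem~\ref{2cm and qusi}. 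The substantive new case is $n = 5$. The horn $\Lambda^5_i$ (for $1 \le i \le 4$) already contains every $0$-, $1$-, $2$- and $3$-simplex of $\Delta^5$, so all $g$-, $h$- and $l$-data are fixed by $\psi$; the only missing datum is the $m$-value on the single $4$-simplex $\sigma = \langle 01\ldots\hat{i}\ldots 5\rangle$ absent from the horn. The $5$-simplex identity~(\ref{r5-simplex m:eq}) applied to $\langle 012345\rangle$ contains each $m$-term on a specific $4$-face of $\Delta^5$, and among these exactly one corresponds to $\sigma$; since $M$ is a group and each other factor is already known from $\psi$, we isolate the unknown $m_\sigma$ by multiplying both sides by inverses of the surrounding factors and take the result as $\tilde{\psi}_4(\sigma)(0,1,2,3,4)$. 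For $n \ge 6$, every $4$-simplex of $\Delta^n$ already lies in $\Lambda^n_i$, since any $5$-subset of $[n]$ is contained in some proper $(n-1)$-subset distinct from $[n]\setminus\{i\}$; hence no new data need be constructed and we simply set $\tilde{\psi}_k \coloneqq \psi_k$ for $k \le 4$.

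The main obstacle is the verification in the $n = 5$ case: the enormity of equation~(\ref{r5-simplex m:eq}) means that isolating $m_\sigma$ and then checking that the resulting $\tilde{\psi}$ satisfies the degeneracy conditions and the remaining instances of the $4$-simplex identity on each $4$-face of $\langle 012345\rangle$ is a lengthy but essentially mechanical computation in $M$, relying on the unit-element identities of Lemma~\ref{lemma_of_unit_element} and the full list of axioms in Definition~\ref{d3cm}. A secondary subtlety, already implicit in the $n \ge 5$ portion of the proof of Theorem~\ref{2cm and qusi}, is a cocycle-style coherence argument showing that the $5$-simplex identity on the one face not belonging to the horn follows automatically from its instances on the remaining $5$-faces of the ambient simplex; this needs to be made precise both in the $n = 5$ case (for the one $5$-simplex of $\Delta^5$) and in the $n \ge 6$ case (for every $5$-simplex of $\Delta^n$ that contains the missing $(n-1)$-face).
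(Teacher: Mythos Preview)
Your proposal is correct and matches the paper's approach exactly: the paper's own proof is the single sentence ``The proof is essentially the same as that of Theorem~\ref{2cm and qusi}'', and your case-by-case elaboration---with the new substantive case $n = 5$ solving for the missing $m$-value from~(\ref{r5-simplex m:eq}), and $n \ge 6$ requiring no new data---is precisely what that sentence leaves implicit. The coherence checks you flag in your final paragraph (the $4$-simplex identity on the missing face when $n=5$, the $5$-simplex identity on the missing face when $n=6$) are real but routine, and the paper likewise leaves them unstated.
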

\begin{proof}
The proof is essentially the same as that of Theorem~\ref{2cm and qusi}.
\end{proof}

\section{Example of 3-crossed module}

\subsection{Construction from a Moore complex}
\label{EX of 3-crossed module}

By \cite{AkcaPak2010}, we can construct a 2-crossed module from a simplicial group with Moore complex of length 2. In this section, we show that a 3-crossed module can similarly be constructed from a simplicial group with a Moore complex of length 3, which can be viewed as a natural extension of the 2-crossed module case.

\begin{dfn}
Let X be a simplicial group. Simplicial group with Moore complex of length 3 is defined by the following chain complex:
\[
XN_3/{\operatorname{Im} (Xd^4_4)} \xrightarrow{Xd^3_3} XN_2 \xrightarrow{Xd^2_2} XN_1 \xrightarrow{Xd^1_1} XN_0 .
\]
Here, we define $XN_n = \displaystyle\bigcap_{n-1}^{i=0} \operatorname{Ker}(Xd_i^n)$ and ${\rm Im}(Xd^4_4) \coloneqq Xd^4_4(XN_4)$.
\end{dfn}
 
 We now begin the construction of a 3-crossed module from a simplicial group with Moore complex of length 3. To achieve this, we need to define all the required structures: the group actions, the Peiffer lifting, the HL-Peiffer lifting, the HL'-Peiffer lifting, the LL-Peiffer lifting, the Left-Homanian, and the right-Homanian. 
 
\begin{theorem}
 Let X be a simplicial group, and suppose its Moore complex of length 3 is given by the following:
 \[
 M\coloneqq XN_3/{\operatorname{Im} (Xd^4_4)} \xrightarrow{\partial \coloneqq Xd^3_3} L\coloneqq XN_2  \xrightarrow{\partial \coloneqq Xd^2_2} H\coloneqq XN_1 \xrightarrow{\partial \coloneqq Xd^1_1}  G \coloneqq XN_0.
 \]

$M \xrightarrow{\partial} L \xrightarrow{\partial} H \xrightarrow{\partial} G$ gives rise to a 3-crossed module with the following data.

\begin{enumerate}[label=(\roman*)]

\item For each $g \in G$,$h \in H$,$l \in L$, and $m \in M$, action will be determind by following:
\begin{align*}
\act{g}h&\coloneqq Xs^0_0(g)\cdot h\cdot Xs^0_0(g^{-1}) ,\\
\act{g}l &\coloneqq   Xs^1_1\circ Xs^0_0(g)\cdot l\cdot Xs^1_1\circ Xs^0_0(g^{-1}),\\
\act{g}m& \coloneqq  Xs^2_2\circ Xs^1_1\circ Xs^0_0(g)\cdot m\cdot Xs^2_2\circ Xs^1_1\circ Xs^0_0(g^{-1}), \\
\act{h}l &\coloneqq  Xs^1_1(h)\cdot l\cdot Xs^1_1(h^{-1}),\\
\act{h}m& \coloneqq  Xs^2_2\circ Xs^1_1(h)\cdot m\cdot Xs^2_2\circ Xs^1_1(h^{-1}), \text{ and}\\
\act{l}m &\coloneqq  Xs^2_2(l)\cdot m\cdot Xs^2_2(l^{-1}).\\
\end{align*}

\item For each $h_1, h_2 \in H$, Peiffer lifting will be determind by following:
\begin{align*}
Xs_1^1(h_1)\cdot Xs_1^1(h_2)\cdot Xs_1^1(h_1^{-1})\cdot Xs_0^1(h_1)\cdot Xs_1^1(h_2^{-1})\cdot Xs_0^1(h_1^{-1}).
\end{align*}

\item For each $l_1, l_2 \in L$, LL-Peiffer lifting will be determind by following:
\begin{align*}
Xs_2^2(l_1)\cdot Xs_2^2(l_2)\cdot Xs_2^2(l_1^{-1})\cdot Xs_1^2(l_1)\cdot Xs_2^2(l_2^{-1})\cdot Xs_1^2(l_1^{-1}).
\end{align*}

\item For each $h \in H$, and $l \in L$, HL-Peiffer lifting will be determind by following:
\begin{align*}
&Xs_2^2(l_1)\cdot Xs_1^2(l_1^{-1})\cdot Xs_2^2\circ Xs_1^1(h_1)\cdot Xs_1^2(l_1)\cdot Xs_0^2(l_1^{-1})\\
&\cdot Xs_2^2\circ Xs_1^1(h_1^{-1})\cdot Xs_0^2(l_1)\cdot Xs_2^2\circ Xs_1^1(h_1)\\
&\cdot Xs_2^2(l_1^{-1})\cdot Xs_2^2\circ Xs_1^1(h_1^{-1}).
\end{align*}

\item For each $h \in H$, and $l \in L$, HL'-Peiffer lifting will be determind by following:
 \begin{align*}
 &Xs^2_2\circ Xs^1_1(h_1)\cdot Xs^2_2(l_1)\cdot Xs^2_1(l_1^{-1})\cdot Xs^2_2\circ Xs^1_1(h_1^{-1})\\
 &\cdot Xs^2_2\circ Xs^1_0(h_1)\cdot Xs^2_1(l_1)\cdot Xs^2_2\circ Xs^1_0(h_1^{-1})\\
 &\cdot Xs^2_1\circ Xs^1_0(h_1)\cdot Xs^2_2(l_1^{-1})\cdot Xs^2_1\circ Xs^1_0(h_1^{-1}).
 \end{align*}

\item For each $h_1, h_2, h_3 \in H$, left-Homanian will be determind by following:
\begin{align*}
Xs^2_2\circ Xs^1_1(h_1)\cdot Xs^2_2\circ Xs^1_1(h_2)\cdot Xs^2_2\circ Xs^1_1(h_3)\cdot Xs^2_2\circ Xs^1_1(h_2^{-1})\\
\cdot Xs^2_2\circ Xs^1_1(h_1^{-1})\cdot Xs^2_2\circ Xs^1_0(h_1)\cdot Xs^2_2\circ Xs^1_0(h_2)\cdot Xs^2_2\circ Xs^1_1(h_3^{-1})\\
\cdot Xs^2_2\circ Xs^1_0(h_2^{-1})\cdot Xs^2_1\circ Xs^1_0(h_2)\cdot Xs^2_2\circ Xs^1_1(h_3)\cdot Xs^2_1\circ Xs^1_0(h_2^{-1})\\
\cdot Xs^2_2\circ Xs^1_0(h_1^{-1})\cdot Xs^2_2\circ Xs^1_1(h_1)\cdot Xs^2_1\circ Xs^1_0(h_2)\cdot Xs^2_2\circ Xs^1_1(h_3^{-1})\\
\cdot Xs^2_1\circ Xs^1_0(h_2^{-1})\cdot Xs^2_2\circ Xs^1_0(h_2)\cdot Xs^2_2\circ Xs^1_1(h_3)\cdot Xs^2_2\circ Xs^1_0(h_2^{-1})\\
\cdot Xs^2_2\circ Xs^1_1(h_2)\cdot Xs^2_2\circ Xs^1_1(h_3^{-1})\cdot Xs^2_2\circ Xs^1_1(h_2^{-1})\cdot Xs^2_2\circ Xs^1_1(h_1^{-1}).
\end{align*}

\item For each $h_1, h_2, h_3 \in H$, right-Homanian will be determind by following:
\begin{align*}
&Xs^2_2\circ Xs^1_1(h_1)\cdot Xs^2_2\circ Xs^1_1(h_2)\cdot Xs^2_2\circ Xs^1_1(h_3)\cdot Xs^2_2\circ Xs^1_1(h_1^{-1})\\
&\cdot Xs^2_2\circ Xs^1_0(h_1)\cdot Xs^2_2\circ Xs^1_1(h_3^{-1})\cdot Xs^2_2\circ Xs^1_1(h_2^{-1})\cdot Xs^2_2\circ Xs^1_0(h_1^{-1})\\
&\cdot Xs^2_1\circ Xs^1_0(h_1)\cdot Xs^2_2\circ Xs^1_1(h_2)\cdot Xs^2_1\circ Xs^1_0(h_1^{-1})\cdot Xs^2_2\circ Xs^1_0(h_1)\\
&\cdot Xs^2_2\circ Xs^1_1(h_3)\cdot Xs^2_2\circ Xs^1_0(h_1^{-1})\cdot Xs^2_2\circ Xs^1_1(h_1)\cdot Xs^2_2\circ Xs^1_1(h_3^{-1})\\
&\cdot Xs^2_2\circ Xs^1_1(h_1^{-1})\cdot Xs^2_1\circ Xs^1_0(h_1)\cdot Xs^2_2\circ Xs^1_1(h_2^{-1})\cdot Xs^2_1\circ Xs^1_0(h_1^{-1})\\
&\cdot Xs^2_2\circ Xs^1_0(h_1)\cdot Xs^2_2\circ Xs^1_1(h_2)\cdot Xs^2_2\circ Xs^1_0(h_1^{-1})\cdot Xs^2_2\circ Xs^1_1(h_1)\\
&\cdot Xs^2_2\circ Xs^1_1(h_2^{-1})\cdot Xs^2_2\circ Xs^1_1(h_1^{-1}).
\end{align*}

\end{enumerate}
\end{theorem}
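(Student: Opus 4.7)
The plan is to verify, axiom by axiom, that the complex $M \to L \to H \to G$ with the actions and liftings defined in (i)--(vii) satisfies all 32 conditions of Definition~\ref{d3cm}. First I would check well-definedness: each action must preserve the relevant intersection of kernels. Conjugation by $Xs_0^0(g)$ preserves $XN_1 = \ker(Xd_0^1)$ because $Xd_0^1 \circ Xs_0^0 = \mathrm{id}$ and $Xd_0^1$ is a homomorphism, and analogous checks handle the $G$-, $H$-, and $L$-actions on higher terms. The same style of simplicial-identity computation shows that each of the six liftings lands in the correct $XN_n$, so that the LL-, HL-, and HL'-Peiffer liftings together with the two Homanians all represent well-defined elements of $M = XN_3/\mathrm{Im}(Xd_4^4)$. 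The axioms~(1)--(5) on $\partial$ being a homomorphism, squaring to zero, and being equivariant with respect to the actions then reduce to the simplicial identities~(\ref{psgrp}) applied to the $Xd_n^n$. The naturality axioms~(6)--(9) are also immediate, since the actions are inner conjugations by degeneracy images and every lifting is a word in such images, so equivariance follows from the distributivity of conjugation over products together with the homomorphism property of each $Xs_j^n$.

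The real work lies in the structural axioms~(10)--(32). The axioms involving only $\{-,-\}$ on the sub-complex $L \to H \to G$ are exactly the 2-crossed module axioms, established in \cite{AkcaPak2010}. The LL-Peiffer axioms~\ref{dll}--\ref{rll} are these same 2-crossed module axioms shifted up one level to $M \to L \to H$, so the same proof carries over after composing with the extra degeneracy $Xs^2_2$. The new axioms, which couple the Homanians with the HL- and HL'-Peiffer liftings, would be proved by expanding both sides into explicit words in $X_3$ or $X_4$, bringing them to a canonical form using~(\ref{psgrp}), and showing that the remaining difference lies in $\mathrm{Im}(Xd_4^4)$ so that it vanishes in $M$.

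The main obstacle will be the cocycle-type axioms~\ref{lhmnr}, \ref{rhmnr}, \ref{lrhmnr}, and the Yang--Baxter-like identity~\ref{yanBt}, each of which expands into a product of a dozen or more degeneracy images on each side. Blind word-level simplification is impractical here. The cleaner strategy is to express the difference of the two sides as $Xd_4^4(w)$ for an explicit $w \in XN_4$, whose construction I expect to mirror the 4- and 5-simplex coloring conditions~(\ref{r4-simplex m})--(\ref{r5-simplex m:eq}) governing the associated simplicial set $M_T$. In other words, the simplicial-set coloring conditions should dictate the certificates $w$ that witness these complicated axioms, and identifying $w$ explicitly will be the combinatorial heart of the proof.
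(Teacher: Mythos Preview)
Your proposal is essentially correct and follows the same strategy as the paper: for each axiom, expand both sides into words in iterated degeneracies and show that their difference lies in $\mathrm{Im}(Xd^4_4)$ (hence vanishes in $M$). The paper illustrates this for one representative case (Property~\ref{dhmnl}): it lifts each side from $X_3$ to $X_4$ by systematically replacing every factor using a simplicial identity of the form $Xd^4_4 \circ Xs^3_k = \cdots$, obtaining $A,B \in X_4$ with $Xd^4_4(A)$ and $Xd^4_4(B)$ equal to the two sides; it then checks $Xd^4_i(A)=Xd^4_i(B)$ for $i=0,1,2,3$, so $BA^{-1}\in XN_4$ and the claim follows. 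The remaining axioms are dispatched ``in a similar manner,'' with a footnote that the authors used a computer program for the bookkeeping. So the paper's certificate $w$ is obtained mechanically by this lift-and-compare procedure, not from the coloring conditions as you conjecture; your idea that (\ref{r4-simplex m})--(\ref{r5-simplex m:eq}) should dictate $w$ is plausible and interesting, but is not what the paper does and would need independent justification.

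One small correction to your organization: in Definition~\ref{d3cm} the Peiffer-lifting axioms on $\{-,-\}$ are \emph{not} the plain 2-crossed module identities but their twisted versions (Properties~\ref{dhmnr}--20), which carry an extra factor $\partial\{h_3,h_2,h_1\}$ or $\partial\{h_3,h_2,h_1\}'$. So you cannot simply cite \cite{AkcaPak2010} for those; they must be verified together with the Homanians. This does not affect your overall method, only the grouping of which axioms are ``already known.''
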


\begin{proof}
To prove that $M \xrightarrow{\partial} L \xrightarrow{\partial} H \xrightarrow{\partial} G$  forms a 3-crossed module, we must verify all the axioms of a 3-crossed module. Here, we verify property \ref{dhmnl}, which states that 
\[
\act{\partial h}m = \act{h}m\{h,\partial m^{-1}\}'_{HL}.
\]
The left-hand side and the right-hand side can be written as follows:

\begin{align*}
\act{\partial h}m &=
Xs^2_2\circ Xs^1_1\circ Xs^0_0\circ Xd^1_1(h)\cdot m\cdot Xs^2_2\circ Xs^1_1\circ Xs^0_0\circ Xd^1_1(h^{-1}),\\
\act{h}m\{h,\partial m^{-1}\}'_{HL} &= 
Xs^2_2\circ Xs^1_1(h)\cdot m\cdot Xs^2_2\circ Xs^1_1(h^{-1})\cdot Xs^2_2\circ Xs^1_1(h)\\
&\cdot Xs^2_2\circ Xd^3_3(m^{-1})\cdot Xs^2_1\circ Xd^3_3(m)\cdot Xs^2_2\circ Xs^1_1(h^{-1})\\
&\cdot Xs^2_2\circ Xs^1_0(h)\cdot Xs^2_1\circ Xd^3_3(m^{-1})\cdot Xs^2_2\circ Xs^1_0(h^{-1})\\
&\cdot Xs^2_1\circ Xs^1_0(h)\cdot Xs^2_2\circ Xd^3_3(m)\cdot Xs^2_1\circ Xs^1_0(h^{-1}).
\end{align*}

To prove this equation, it suffices to show that $\act{h}m\{h,\partial m^{-1}\}'_{HL}\act{\partial h}m^{-1} \in {\rm Im}Xd^4_4 $. By using identity (\ref{psgrp}), we can express $\act{\partial h}m$ and $ \act{h}m\{h,\partial m^{-1}\}'_{HL}$ as follows:
\begin{align*}
\act{\partial h}m &=
Xd^4_4\circ Xs^3_2\circ Xs^2_1\circ Xs^1_0(h)\cdot Xd^4_4\circ Xs^3_3(m)\cdot Xd^4_4\circ Xs^3_2\circ Xs^2_1\circ Xs^1_0(h^{-1})\\
&=Xd^4_4\circ \left(Xs^3_2\circ Xs^2_1\circ Xs^1_0(h)\cdot  Xs^3_3(m)\cdot  Xs^3_2\circ Xs^2_1\circ Xs^1_0(h^{-1})\right)\\
&\coloneqq Xd^4_4 (A)
\end{align*}
 and
\begin{align*}
\act{h}m\{h,\partial m^{-1}\}'_{HL} &= 
Xd^4_4\circ Xs^3_3\circ Xs^2_2\circ Xs^1_1(h)\cdot Xd^4_4\circ Xs^3_3(m)\\
&\cdot Xd^4_4\circ Xs^3_3\circ Xs^2_2\circ Xs^1_1(h^{-1})\cdot Xd^4_4\circ Xs^3_3\circ Xs^2_2\circ Xs^1_1(h)\cdot Xd^4_4\circ Xs^3_2(m^{-1})\\
&\cdot Xd^4_4\circ Xs^3_1(m)\cdot Xd^4_4\circ Xs^3_3\circ Xs^2_2\circ Xs^1_1(h^{-1})\cdot Xd^4_4\circ Xs^3_3\circ Xs^2_2\circ Xs^1_0(h)\\
&\cdot Xd^4_4\circ Xs^3_1(m^{-1})\cdot Xd^4_4\circ Xs^3_3\circ Xs^2_2\circ Xs^1_0(h^{-1})\cdot Xd^4_4\circ Xs^3_3\circ Xs^2_1\circ Xs^1_0(h)\\
&\cdot Xd^4_4\circ Xs^3_2(m)\cdot Xd^4_4\circ Xs^3_3\circ Xs^2_1\circ Xs^1_0(h^{-1})\\
&=Xd^4_4\circ
\left(
    \begin{aligned}
&\ \ Xs^3_3\circ Xs^2_2\circ Xs^1_1(h)\cdot Xs^3_3(m)\\
&\cdot  Xs^3_3\circ Xs^2_2\circ Xs^1_1(h^{-1})\cdot  Xs^3_3\circ Xs^2_2\circ Xs^1_1(h)\cdot  Xs^3_2(m^{-1})\\
&\cdot  Xs^3_1(m)\cdot  Xs^3_3\circ Xs^2_2\circ Xs^1_1(h^{-1})\cdot  Xs^3_3\circ Xs^2_2\circ Xs^1_0(h)\\
&\cdot  Xs^3_1(m^{-1})\cdot  Xs^3_3\circ Xs^2_2\circ Xs^1_0(h^{-1})\cdot  Xs^3_3\circ Xs^2_1\circ Xs^1_0(h)\\
&\cdot  Xs^3_2(m)\cdot  Xs^3_3\circ Xs^2_1\circ Xs^1_0(h^{-1})
    \end{aligned}
\right)\\
&\coloneqq Xd^4_4(B),
\end{align*}
 respectively.

From these expressions, we observe that $A, B \in X_4$. Thus, to prove $\act{h}m\{h,\partial m^{-1}\}'_{HL}\act{\partial h}m^{-1} \in {\rm Im}(Xd^4_4)$, it suffices to show that $AB^{-1} \in {\rm Ker}(Xd^4_i)$ for each $i=0,1,2,3$. We will compute $Xd^4_i(A)$ and $Xd^4_i(B)$, and show that $Xd^4_i(A)=Xd^4_i(B)$ for all $i=0,1,2,3$.

\begin{enumerate}[label=(\roman*)]
\item For i=0, by applying identity (\ref{psgrp}), we obtain the following expressions for $Xd^4_0(A)$ and $Xd^4_0(B)$:

\begin{align*}
Xd^4_0 (A) &=
Xs^2_1\circ Xs^1_0(h)\cdot Xs^2_1\circ Xs^1_0(h^{-1})\\
&=e, \text{ and} \\
Xd^4_0 (B) &=
Xs^2_2\circ Xs^1_1(h)\cdot Xs^2_2\circ Xs^1_1(h^{-1})\\
&\cdot Xs^2_2\circ Xs^1_0(h)\cdot Xs^2_2\circ Xs^1_0(h^{-1})\\
&=e.
\end{align*}

\item For i=1, by applying identity (\ref{psgrp}), we obtain the following expressions for $Xd^4_1(A)$ and $Xd^4_1(B)$:

\begin{align*}
Xd^4_1 (A) &=
Xs^2_1\circ Xs^1_0(h)\cdot Xs^2_1\circ Xs^1_0(h^{-1})\\
&=e, \text{ and}\\
Xd^4_1 (B) &=
Xs^2_2\circ Xs^1_1(h)\cdot Xs^2_2\circ Xs^1_1(h^{-1})\cdot Xs^2_2\circ Xs^1_1(h)\\
&\cdot m\cdot Xs^2_2\circ Xs^1_1(h^{-1})\cdot Xs^2_2\circ Xs^1_1(h)\cdot m^{-1}\\
&\cdot Xs^2_2\circ Xs^1_1(h^{-1})\cdot Xs^2_2\circ Xs^1_0(h)\cdot Xs^2_2\circ Xs^1_0(h^{-1})\\
&=e.
\end{align*}

\item For i=2, by applying identity (\ref{psgrp}), we obtain the following expressions for $Xd^4_2(A)$ and $Xd^4_2(B)$:
\begin{align*}
Xd^4_2 (A) &=
Xs^2_1\circ Xs^1_0(h)\cdot Xs^2_1\circ Xs^1_0(h^{-1})\\
&=e, \text{ and}\\
Xd^4_2 (B) &=
Xs^2_2\circ Xs^1_1(h)\cdot Xs^2_2\circ Xs^1_1(h^{-1})\cdot Xs^2_2\circ Xs^1_1(h)\\
&\cdot m^{-1}\cdot m\cdot Xs^2_2\circ Xs^1_1(h^{-1})\cdot Xs^2_2\circ Xs^1_0(h)\\
&\cdot m^{-1}\cdot Xs^2_2\circ Xs^1_0(h^{-1})\cdot Xs^2_2\circ Xs^1_0(h)\\
&\cdot m\cdot Xs^2_2\circ Xs^1_0(h^{-1})\\
&=e.
\end{align*}

\item For i=3, by applying identity (\ref{psgrp}), we obtain the following expressions for $Xd^4_3(A)$ and $Xd^4_3(B)$:

\begin{align*}
Xd^4_3 (A) &=
Xs^2_1\circ Xs^1_0(h)\cdot m\cdot Xs^2_1\circ Xs^1_0(h^{-1}), \text{ and}\\
Xd^4_3 (B) &=
Xs^2_2\circ Xs^1_1(h)\cdot m\cdot Xs^2_2\circ Xs^1_1(h^{-1})\\
&\cdot Xs^2_2\circ Xs^1_1(h)\cdot m^{-1}\cdot Xs^2_2\circ Xs^1_1(h^{-1})\\
&\cdot Xs^2_2\circ Xs^1_0(h)\cdot Xs^2_2\circ Xs^1_0(h^{-1})\\
&\cdot Xs^2_1\circ Xs^1_0(h)\cdot m\cdot Xs^2_1\circ Xs^1_0(h^{-1})\\
&=Xs^2_1\circ Xs^1_0(h)\cdot m\cdot Xs^2_1\circ Xs^1_0(h^{-1}).
\end{align*}
\end{enumerate}

By performing calculations for all $i=0,1,2,3$, we conclude that \( Xd^4_i(A) = Xd^4_i(B) \) for all \( i = 0,1,2,3 \), and therefore $\act{h}m\{h,\partial m^{-1}\}'_{HL}\act{\partial h}m^{-1} \in {\rm Im}(Xd^4_4)$, which establishes that property \ref{dhmnl} holds:
 \[
 \act{\partial h}m = \act{h}m\{h,\partial m^{-1}\}'_{HL}.
 \]

The remaining properties of a 3-crossed module can be verified in a similar manner\footnote{Although each axiom can be proven by hand, we have developed and used a computer program to verify that all the axioms of a 3-crossed module are satisfied.}.
\end{proof}

\subsection{Construction from a 2-crossed module}

In this section, we will give a way to construct a 3-crossed module from a 2-crossed module. This is a standard construction, and an extension of the well-known methods for obtaining a crossed module from a group or a 2-crossed module from a crossed module.

Let $C\xrightarrow{\partial}B \xrightarrow{\partial} A$ be a 2-crossed module. We will set the groups $G,H,L$ and $M$ as follows:
\[
G \coloneqq A, \quad H \coloneqq B\times A, \quad L \coloneqq C\times B, \quad M \coloneqq C.
\]
For $G$ and $M$, the group product is that of $A$ and $C$, respectively. For $H$ and $L$, the group product is defined component-wise; i.e., for $h_2=(b_2, a_2), h_1=(b_1, a_1) \in H$ and $l_2=(c_2, b_2), l_1=(c_1, b_1) \in l$, we define the group products $h_2 \cdot_H h_1$ and $l_2 \cdot_L l_1$ as following:
\begin{equation}
\label{ex2:prod}
\begin{aligned}
h_2 \cdot_H h_1 &\coloneqq (b_2 \cdot_B b_1, a_2 \cdot_A a_1), \\
l_2 \cdot_L l_1 &\coloneqq (c_2 \cdot_C c_1, b_2 \cdot_B b_1).
\end{aligned}
\end{equation}

We define the maps $\del:M \rightarrow L, \del:L \rightarrow H$ and $\del:H \rightarrow G$ as follows:
\begin{equation}
\label{ex2:del}
\begin{aligned}
\del : &M \longrightarrow L\\
   &c  \longmapsto (c, e_B), \\
\del : &L \longrightarrow H\\
   &(c,b)  \longmapsto (b, e_A),\\
\del : &H \longrightarrow G\\
   &(b,a)  \longmapsto a.         
\end{aligned}
\end{equation}
Here, $e_X$ denotes the identity element in the group $X$.

To equip this sequence with the structure of a 3-crossed module, we must define the required actions and six types of liftings. We define these as follows.

\textbf{Actions.}

The actions are derived from the 2-crossed module structure. Let $g=a_g \in G, h=(b_h, a_h) \in H, l=(c_l, b_l) \in L,$ and $m = c_m \in M$. We define:
\begin{equation}
\begin{aligned}
  \act{g}h &\coloneqq  (\act{a_g}b_h,\act{a_g}a_h), & \act{g}l &\coloneqq  (\act{a_g}c_l,\act{a_g}b_l), & \act{g}m &\coloneqq  \act{a_g}c_m, \\
  && \act{h}l &\coloneqq  (\act{b_h}c_l,\act{b_g}b_l), & \act{h}m &\coloneqq  \act{b_h}c_m, \\
  &&&& \act{l}m &\coloneqq  \act{c_l}c_m.
\end{aligned}
\end{equation}

\textbf{Peiffer lifting.}

Since $C\xrightarrow{\partial}B \xrightarrow{\partial} A$ is a 2-crossed module, it is equipped with a Peiffer lifting, which we denote $\{-,-\}_B : B \times B \to C$. We define the new Peiffer lifting $\{-,-\} : H \times H \to L$ using this. For $h_1=(b_1, a_1)$ and $h_2=(b_2, a_2)$ in $H$, we define:
\begin{align}
\{h_2, h_1\} \coloneqq (\{b_2, b_1\}_B, b_2b_1b_2^{-1} \act{a_1}b_1^{-1} ).
\end{align}

\textbf{Left-Homanian.}

We define the Left-Homanian $\{-, -, -\} : H \times H \times H \to M$ using the 2-crossed module's lifting $\{-,-\}_B$.  For $h_3 = (b_3, a_3), h_2 = (b_2, a_2), h_1 = (b_1, a_1) \in H$, it is defined as:
\begin{align}
\label{ex_lhmn}
\{h_3,h_2,h_1\} \coloneqq \{b_3b_2,b_1\}_B\{b_3,\act{a_2}b_1\}_B^{-1}\act{b_3}\{b_2,b_1\}_B^{-1}.
\end{align}
This expression is related to the 2-crossed module axiom (Property~\ref{lpl}). If $a_2 = \del (b_2)$ were satisfied, the right-hand side would equal $e_C$ by that property. However, in $H = B \times A$, $a_2$ is not necessarily equal to $\del(b_2)$, so this lifting is not generally trivial.

\textbf{Right-Homanian.}

 The Right-Homanian is defined in a similar manner. For $h_3 = (b_3, a_3), h_2 = (b_2, a_2), h_1 = (b_1, a_1) \in H$:
\begin{align}
\label{ex_rhmn}
\{h_3,h_2,h_1\}' \coloneqq \{b_3,b_2b_1\}_B\act{(\act{a_3}b_2)}\{b_3,b_1\}_B^{-1}\{b_3,b_2\}_B^{-1}.
\end{align}

\textbf{HL-Peiffer lifting.}

This lifting $\{-,-\}_{HL} : H \times L \to M$ is defined as:
For $l_1 = (c_1, b_1) \in L$ and $h_2 = (b_2, a_2) \in H$,
\begin{align}
\{h_2,l_1\}_{HL} \coloneqq c_1\{b_1^{-1},b_2\}_B\act{b_2}c_1^{-1}.
\end{align}

\textbf{HL'-Peiffer lifting.}

This lifting $\{-,-\}'_{HL} : H \times L \to M$ is defined as:
For $l_1 = (c_1, b_1) \in L$ and $h_2 = (b_2, a_2) \in H$,
\begin{align}
\{h_2,l_1\}'_{HL} = \act{b_2}c_1\{b_2,b_1^{-1}\}_B\act{a_2}c_1^{-1}.
\end{align}

\textbf{LL-Peiffer lifting.}

This lifting $\{-,-\} : L \times L \to M$ is defined as:
For $l_1 = (c_1, b_1), l_2 = (c_2, b_2)  \in L$,
\begin{align}
\label{ex2:ll}
\{l_2,l_1\}_{LL} = c_2 c_1 c_2^{-1}\act{b_2}c_1^{-1}.
\end{align}

With equations~(\ref{ex2:prod})through~(\ref{ex2:ll}), we have defined all structures required for a 3-crossed module: the groups, maps, actions, and six types of liftings. The remaining work is to prove that these definitions satisfy all the axioms of a 3-crossed module. These structures do indeed satisfy the axioms, leading to the following theorem:

\begin{theorem}
Let $C\xrightarrow{\partial}B \xrightarrow{\partial} A$ be a 2-crossed module. Let $G \coloneqq A$, $H \coloneqq B\times A$, $L \coloneqq C\times B$, and $M \coloneqq C$, with group structures as defined in \eqref{ex2:prod}.
Then $M \xrightarrow{\partial} L \xrightarrow{\partial} H \xrightarrow{\partial} G$ forms a 3-crossed module with the maps, actions, and liftings defined in \eqref{ex2:del} through \eqref{ex2:ll}.
\end{theorem}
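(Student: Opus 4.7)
The plan is to verify each of the axioms of Definition~\ref{d3cm} by exploiting the direct-product structure $H = B \times A$ and $L = C \times B$. An equation in such a product group splits component-wise: typically one component is a routine identity in $B$ or $A$, while the other becomes an identity in $C$ that reduces to an axiom of the underlying 2-crossed module $C \xrightarrow{\partial} B \xrightarrow{\partial} A$. Throughout I let $\{-,-\}_B$ denote that 2-crossed module's Peiffer lifting.

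I would proceed in four batches. First, the routine axioms: that each $\partial$ in~\eqref{ex2:del} is a homomorphism and that $\partial\circ\partial = 0$ are immediate from the formulas. The equivariance properties for actions and for all six liftings follow from the equivariance properties of the underlying 2-crossed module structure, applied coordinate-wise.

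Second, I would handle the ``boundary-of-lifting'' axioms, including properties~\ref{dhl}, \ref{dhlp}, \ref{dll}, and those expressing a conjugation as a product involving a lifting. Here the crucial observation is that the second components of the Peiffer and LL-Peiffer liftings were chosen precisely so that the $B$-component of the boundary formula holds on the nose, via the 2-crossed module identity $\partial\{b_2,b_1\}_B = b_2 b_1 b_2^{-1}\,{}^{\partial b_2} b_1^{-1}$ and its analogue for the complex $C \to B \to A$. The HL and HL'-Peiffer liftings are similarly engineered so that their boundary axioms unpack to the 2-crossed module axioms~\ref{acthl} and~\ref{actgl}, plus trivial identities in $B$.

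Third, I would verify the product rules for the liftings. The conceptual point is that the Homanians in~\eqref{ex_lhmn} and~\eqref{ex_rhmn} are \emph{defined} as the defects measuring the failure of the 2-crossed module identities~\ref{lpl} and~\ref{rpl} when the $A$-component $a_i$ of $h_i = (b_i,a_i)$ is not forced to equal $\partial b_i$. With this perspective, axiom~\ref{dhmnr} and its right-hand analogue become essentially tautological, while the product rules~\ref{rl:hhl}--\ref{rrHL} for the HL and HL'-Peiffer liftings follow by unpacking the definitions, applying the 2-crossed module associativity axioms~\ref{lpl}--\ref{rpl} to $\{-,-\}_B$, and collecting terms.

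The main obstacle will be the coherence axioms~\ref{lhmnr}, \ref{rhmnr}, \ref{lrhmnr}, and especially the Yang--Baxter-type identity~\ref{yanBt}, which couples all three Homanians with both the Peiffer and LL-Peiffer liftings. My strategy is to expand every lifting on both sides into iterated expressions in $\{-,-\}_B$, then apply the 2-crossed module associativity axioms repeatedly to reduce both sides to a common normal form of nested Peiffer brackets. The bookkeeping is formidable and, as in the Moore complex construction above, I expect computer verification to be indispensable for the final identity. The remaining compatibility axioms~\ref{rllandhl} and the commutator identity relating $\{\partial m,\partial m'\}_{LL}$ to the commutator in $M$ follow by direct substitution into the formula~\eqref{ex2:ll} combined with the 2-crossed module identity $\{\partial l_2,\partial l_1\}_B = l_2 l_1 l_2^{-1} l_1^{-1}$.
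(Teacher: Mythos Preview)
Your proposal is correct and takes essentially the same approach as the paper: a direct, axiom-by-axiom verification in which each identity is unpacked component-wise and reduced to the underlying 2-crossed module axioms. The paper's proof is in fact much terser than yours---it declares the verification ``straightforward but lengthy,'' works out Property~\ref{rl:hhl} in detail as a representative example, and asserts that the remaining axioms follow similarly---so your batching of the axioms and your conceptual explanation of the Homanians as defect terms actually add useful structure beyond what the paper provides.
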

\begin{proof}
The proof consists of checking all the properties of a 3-crossed module. This is a straightforward but lengthy calculation.

As an example, we will show that this structure satisfies the 3-crossed module axiom for Property~\ref{rl:hhl}. Property~\ref{rl:hhl} is as follows: for $h_1 = (b_1, a_1), h_2 = (b_2, a_2)  \in H$ and $l = (c_l, b_l) \in L$,
\begin{align*}
\{h_2h_1,l\}_{HL} = \act{l}\{\partial l^{-1},h_2,h_1\}'\{h_2,l\}_{HL} \act{h_2}\{h_1,l\}_{HL}  .
\end{align*}
We can show this holds by the following calculation:
\begin{align*}
  \nonumber
  (\text{LHS}) &= \{(b_2,a_2)(b_1,a_1),(c_l,b_l)\}_{HL} \\
  \nonumber
  &= \{(b_2b_1,a_2a_1),(c_l,b_l)\}_{HL} \\
  \nonumber
  &= c_l\{b_l^{-1},b_2b_1\}\act{b_2b_1}c_l^{-1} \\
  \nonumber
  (\text{RHS}) &= \act{(c_l,b_l)}\{\partial (c_l,b_l)^{-1},(b_2,a_2),(b_1,a_1)\}'\{(b_2,a_2),(c_l,b_l)\}_{HL}\act{(b_2,a_2)}\{(b_1,a_1),(c_l,b_l)\}_{HL} \\
  \nonumber
  &= \act{(c_l,b_l)}\{(b_l^{-1},e_A),(b_2,a_2),(b_1,a_1)\}'c_l\{b_l^{-1},b_2\}\act{b_2}c_l^{-1}\act{(b_2,a_2)}(c_l\{b_l^{-1},b_1\}\act{b_1}c_l^{-1}) \\
  \nonumber
  &= \act{(c_l,b_l)}(\{b_l^{-1},b_2b_1\}\act{b_2}\{b_l^{-1},b_1\}^{-1}\{b_l^{-1},b_2\}^{-1})c_l\{b_l^{-1},b_2\}\act{b_2}c_l^{-1}\act{b_2}c_l\act{b_2}\{b_l^{-1},b_1\}\act{b_2b_1}c_l^{-1} \\
  &= c_l(\{b_l^{-1},b_2b_1\}\act{b_2}\{b_l^{-1},b_1\}^{-1}\{b_l^{-1},b_2\}^{-1})c_l^{-1}c_l\{b_l^{-1},b_2\}\act{b_2}c_l^{-1}\act{b_2}c_l\act{b_2}\{b_l^{-1},b_1\}\act{b_2b_1}c_l^{-1} \\
  \nonumber
  &= c_l\{b_l^{-1},b_2b_1\}\act{b_2b_1}c_l^{-1} = (\text{LHS})
\end{align*}
The other properties of a 3-crossed module can be verified in a similar manner.
\end{proof}

\section{Conclusion and Future Work}

In this paper, we have proposed an alternative formulation of a 3-crossed module. Our definition, constructed from four groups and six kinds of liftings, was motivated by the goal of extending the established equivalence between 2-crossed modules and Gray 3-groups \cite{SarikayaUlualan2024}.

As the primary contribution of this paper, we validated this new definition. We proved that the simplicial set associated with our 3-crossed module forms a quasi-category. We also demonstrated its robustness by showing how it naturally extends a 2-crossed module and, crucially, that it is equivalent to the structure found in the Moore complex of length 3 of a simplicial group.

The established equivalence in \cite{SarikayaUlualan2024} serves as a benchmark. We conjecture that our formulation of a 3-crossed module is the \lq correct\rq \ one for this program, positing that there exists a one-dimension-higher Gray category structure whose category is equivalent to the category of our 3-crossed modules. Our immediate future work is to construct such a category and prove this anticipated equivalence.

\begin{appendices}
\section{3-crossed module and diagram}
\label{app_dia_3cm}
Here we will treat the 3-crossed module using diagrams.
This approach is useful for computing and understanding the operators of a 3-crossed module.
Throughout this section, we assume that $ M\xrightarrow{\partial}L \xrightarrow{\partial} H \xrightarrow{\partial} G $ is 3-crossed module. We use two types of diagrams: cube-type and lattice-type.
The cube-type diagram is useful for understanding the structure of a 3-crossed module, as it gives an intuitive picture of the extension of a Gray category.
The lattice-type diagram, on the other hand, is convenient for explicit calculations.

For $g_1, g_2 \in G$, product $g_2 \#_1 g_1 \coloneqq g_2\cdot g_1$ can be represented as in Figure~\ref{d:1-mor_comp}.

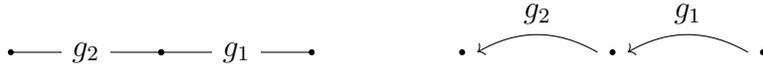
\begin{figure}[htbp]
\begin{center}
  \begin{tikzpicture}[scale=2]
    \begin{scope}[xshift=0.0cm]
      %\draw (0,0) to node[sloped] {\tikz{\draw[->](0,0)--(0.1,0);}} (0,-1);
      %\draw (1,0) to node[sloped] {\tikz{\draw[->](0,0)--(0.1,0);}} (1,-1);
      \draw (0,0) to node[sloped] [fill=white] {$g_2$} (1,0);
      %\draw (0,-1) to node  {\tikz{\draw[-](0,0)--(0.1,0);}} (1,-1);
      %\node at (1/2,-1/2) {$h_2$};
      \fill[black] (0,0) circle[radius=0.02];
      \fill[black] (1,0) circle[radius=0.02];
    \end{scope}
    \begin{scope}[xshift=1.0cm]
      %\draw (0,0) to node[sloped] {\tikz{\draw[->](0,0)--(0.1,0);}} (0,-1);
      %\draw (1,0) to node[sloped] {\tikz{\draw[->](0,0)--(0.1,0);}} (1,-1);
      \draw (0,0) to node [fill=white] {$g_1$} (1,0);
      %\draw (0,-1) to node {\tikz{\draw[-](0,0)--(0.1,0);}} (1,-1);
      \fill[black] (0,0) circle[radius=0.02];
      \fill[black] (1,0) circle[radius=0.02];
      %\node at (1/2,-1/2) {$h_1$};
    \end{scope}
    \begin{scope}[xshift=3.0cm]
      \fill[black] (0,0) circle[radius=0.02];
      \fill[black] (1,0) circle[radius=0.02];
      \fill[black] (2,0) circle[radius=0.02];
      \draw[<-] (0.1,0) to [out=30,in=150] node[midway] (g2) {} (0.9,0)   ;
	      \node[above] at (g2) {$g_2$};
	\draw[<-] (1.1,0) to [out=30,in=150] node[midway] (g1) {} (1.9,0)   ;
	      \node[above] at (g1) {$g_1$};
    \end{scope}
  \end{tikzpicture}
\end{center}
\caption{Compositon of $G$}
\label{d:1-mor_comp}
\end{figure}

The left diagram is the cube-type representation, while the right diagram is the lattice-type. For the group $G$, these two diagrams look almost identical, but for $H$ and $L$ they appear quite different. An element $H$ can be regarded as a map defined by
\begin{align*}
h: &G \longrightarrow G\\
   &g  \longmapsto \partial(h)g .
\end{align*}
for each $h \in H$.

This correspondence is illustrated in Figure~\ref{d:2-mor}:
the left figure shows the cube-type diagram, and the right one shows the lattice-type.
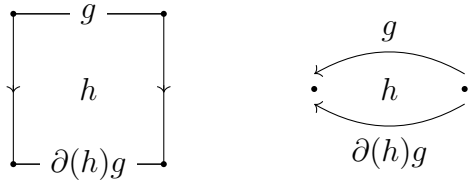
\begin{figure}
\begin{center}
  \begin{tikzpicture}[scale=2]
    \begin{scope}[xshift=0.0cm]
      \draw (0,0) to node[sloped] {\tikz{\draw[->](0,0)--(0.1,0);}} (0,-1);
      \draw (1,0) to node[sloped] {\tikz{\draw[->](0,0)--(0.1,0);}} (1,-1);
      \draw (0,0) to node[sloped] {\tikz{\draw[->](0,0)--(0.1,0);}} (1,0);
      \draw (0,-1) to node  {\tikz{\draw[-](0,0)--(0.1,0);}} (1,-1);
      \node at (1/2,-1/2) {$h$};
      \draw (0,0) to node [fill=white] {$g$} (1,0);
      \draw (0,-1) to node [fill=white] {$\del(h)g$} (1,-1);
      \fill[black] (0,0) circle[radius=0.02];
      \fill[black] (1,0) circle[radius=0.02];
      \fill[black] (0,-1) circle[radius=0.02];
      \fill[black] (1,-1) circle[radius=0.02];
    \end{scope}
    \begin{scope}[xshift=2.0cm, yshift=-0.5cm]
      \fill[black] (0,0) circle[radius=0.02];
      \fill[black] (1,0) circle[radius=0.02];
      \draw[<-] (0,0.1) to [out=30,in=150] node[midway] (g) {} (1,0.1)   ;
	      \node[above] at (g) {$g$};
	\draw[<-] (0,-0.1) to [out=-30,in=-150] node[midway] (g') {} (1,-0.1)   ;
	      \node[below] at (g') {$\del(h) g$};
	      \node at ($(g)!0.5!(g')$) {$h$};
    \end{scope}
  \end{tikzpicture}
\end{center}
\caption{Two types of $H$}
\label{d:2-mor}
\end{figure}

In this section, it is convenient to work with the cube-type representation; hence, we mainly use the cube-type diagrams.
The lattice-type diagrams are helpful for understanding the color conditions.
Occasionally, we present both cube-type and lattice-type diagrams for comparison, but in most cases we only use the cube-type in this section.
 
From now on, we denote the unit element $e_g$ of $G$ without a label, as shown in Figure~\ref{d:1-mor_e}.
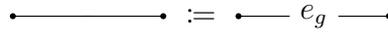
\begin{figure}
 \begin{center}
  \begin{tikzpicture}[scale=2]
    \begin{scope}[xshift=0.0cm]
      %\draw (0,0) to node[sloped] {\tikz{\draw[->](0,0)--(0.1,0);}} (0,-1);
      %\draw (1,0) to node[sloped] {\tikz{\draw[->](0,0)--(0.1,0);}} (1,-1);
      \draw (0,0) to node[sloped] {\tikz{\draw[-](0,0)--(0.1,0);}} (1,0);
      %\draw (0,-1) to node  {\tikz{\draw[-](0,0)--(0.1,0);}} (1,-1);
      %\node at (1/2,-1/2) {$h_2$};
      \fill[black] (0,0) circle[radius=0.02];
      \fill[black] (1,0) circle[radius=0.02];
    \end{scope}
    \begin{scope}[xshift=1.5cm]
      %\draw (0,0) to node[sloped] {\tikz{\draw[->](0,0)--(0.1,0);}} (0,-1);
      %\draw (1,0) to node[sloped] {\tikz{\draw[->](0,0)--(0.1,0);}} (1,-1);
      \draw (0,0) to node [fill=white] {$e_g$} (1,0);
      %\draw (0,-1) to node {\tikz{\draw[-](0,0)--(0.1,0);}} (1,-1);
      \fill[black] (0,0) circle[radius=0.02];
      \fill[black] (1,0) circle[radius=0.02];
      %\node at (1/2,-1/2) {$h_1$};
    \end{scope}
    \node at (1.25, 0) {$\coloneqq$};
  \end{tikzpicture}
\end{center}
\caption{Unit element of $G$}
\label{d:1-mor_e}
\end{figure}

If the top edge of a square is determined, we can compute the bottom edge accordingly, so we may omit writing the label for the bottom edge.
For $h_1, h_2 \in H$, the product $h_2\#_2 h_1 \coloneqq h_2\cdot h_1$ is represented in Figure~\ref{d:2-mor_vertical}.
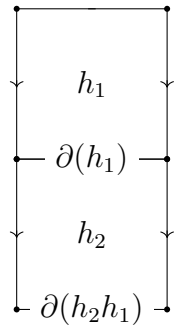
\begin{figure}
\begin{center}
  \begin{tikzpicture}[scale=2]
    \begin{scope}[xshift=0.0cm]
      \draw (0,0) to node[sloped] {\tikz{\draw[->](0,0)--(0.1,0);}} (0,-1);
      \draw (1,0) to node[sloped] {\tikz{\draw[->](0,0)--(0.1,0);}} (1,-1);
      \draw (0,0) to node[sloped] {\tikz{\draw[-](0,0)--(0.1,0);}} (1,0);
      \draw (0,-1) to node  {\tikz{\draw[-](0,0)--(0.1,0);}} (1,-1);
      \node at (1/2,-1/2) {$h_1$};
      \fill[black] (0,0) circle[radius=0.02];
      \fill[black] (1,0) circle[radius=0.02];
      \fill[black] (0,-1) circle[radius=0.02];
      \fill[black] (1,-1) circle[radius=0.02];
    \end{scope}
    \begin{scope}[yshift=-1.0cm]
      \draw (0,0) to node[sloped] {\tikz{\draw[->](0,0)--(0.1,0);}} (0,-1);
      \draw (1,0) to node[sloped] {\tikz{\draw[->](0,0)--(0.1,0);}} (1,-1);
      \draw (0,0) to node [fill=white] {$\del(h_1)$} (1,0);
      \draw (0,-1) to node [fill=white] {$\del(h_2 h_1)$} (1,-1);
      \node at (1/2,-1/2) {$h_2$};
      \fill[black] (0,0) circle[radius=0.02];
      \fill[black] (1,0) circle[radius=0.02];
      \fill[black] (0,-1) circle[radius=0.02];
      \fill[black] (1,-1) circle[radius=0.02];
    \end{scope}
  \end{tikzpicture}
\end{center}
\caption{Vertical compositon of $H$}
\label{d:2-mor_vertical}
\end{figure}

This product $h_2\#_2 h_1$ is called the {\bf vertical composion} in $H$ which is widely used in higher categories such as 2-categories and Gray categories. 
For each $g \in G$, we denote by ${\rm id}_g \in H$ an element satisfying ${\rm id}_g (g) = g$. Although ${\rm id}_g$ is not uniqe, the unit element of $H$ is always of this form. We depict ${\rm id}_g \in H$ in Figure~\ref{d:2-mor_unit}.
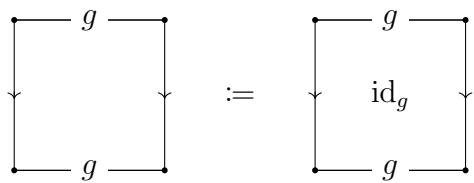
\begin{figure}
\begin{center}
  \begin{tikzpicture}[scale=2]
    \begin{scope}[xshift=0.0cm]
      \draw (0,0) to node[sloped] {\tikz{\draw[->](0,0)--(0.1,0);}} (0,-1);
      \draw (1,0) to node[sloped] {\tikz{\draw[->](0,0)--(0.1,0);}} (1,-1);
      \draw (0,0) to node[sloped] {\tikz{\draw[->](0,0)--(0.1,0);}} (1,0);
      \draw (0,-1) to node  {\tikz{\draw[-](0,0)--(0.1,0);}} (1,-1);
      \node at (1/2,-1/2) {$$};
      \draw (0,0) to node [fill=white] {$g$} (1,0);
      \draw (0,-1) to node [fill=white] {$g$} (1,-1);
      \fill[black] (0,0) circle[radius=0.02];
      \fill[black] (1,0) circle[radius=0.02];
      \fill[black] (0,-1) circle[radius=0.02];
      \fill[black] (1,-1) circle[radius=0.02];
    \end{scope}
     \begin{scope}[xshift=2.0cm]
      \draw (0,0) to node[sloped] {\tikz{\draw[->](0,0)--(0.1,0);}} (0,-1);
      \draw (1,0) to node[sloped] {\tikz{\draw[->](0,0)--(0.1,0);}} (1,-1);
      \draw (0,0) to node[sloped] {\tikz{\draw[->](0,0)--(0.1,0);}} (1,0);
      \draw (0,-1) to node  {\tikz{\draw[-](0,0)--(0.1,0);}} (1,-1);
      \node at (1/2,-1/2) {${\rm id}_g$};
      \draw (0,0) to node [fill=white] {$g$} (1,0);
      \draw (0,-1) to node [fill=white] {$g$} (1,-1);
      \fill[black] (0,0) circle[radius=0.02];
      \fill[black] (1,0) circle[radius=0.02];
      \fill[black] (0,-1) circle[radius=0.02];
      \fill[black] (1,-1) circle[radius=0.02];
    \end{scope}
        \node at (1.5,-1/2) {$\coloneqq$};
  \end{tikzpicture}
\end{center}
\caption{Unit element of $H$}
\label{d:2-mor_unit}
\end{figure}

We will omit the label for ${\rm id}_g \in H$ in the following diagrams.

For $g_1, g_2 \in G$ and $h_1, {\rm id}_{g_2} \in H$, and assume that $h_1(g_1) = g'_1$. Then there exists a {\bf horizontal compositon} ${\rm id}_{g_2} \#_1 h_1 (g_2\#_1 g_1 ) = g_2\#_1 g'_1$, which is illustrated in Figure~\ref{d:2-mor_L_hori}. 

\begin{figure}
\begin{center}
  \begin{tikzpicture}[scale=2]
    \begin{scope}[xshift=0.0cm]
      \draw (0,0) to node[sloped] {\tikz{\draw[->](0,0)--(0.1,0);}} (0,-1);
      \draw (1,0) to node[sloped] {\tikz{\draw[->](0,0)--(0.1,0);}} (1,-1);
      \draw (0,0) to node[sloped] {\tikz{\draw[->](0,0)--(0.1,0);}} (1,0);
      \draw (0,-1) to node  {\tikz{\draw[-](0,0)--(0.1,0);}} (1,-1);
      \node at (1/2,-1/2) {$$};
      \draw (0,0) to node [fill=white] {$g_2$} (1,0);
      \draw (0,-1) to node [fill=white] {$g_2$} (1,-1);
      \fill[black] (0,0) circle[radius=0.02];
      \fill[black] (1,0) circle[radius=0.02];
      \fill[black] (0,-1) circle[radius=0.02];
      \fill[black] (1,-1) circle[radius=0.02];
    \end{scope}
    \begin{scope}[xshift=1.0cm]
      \draw (0,0) to node[sloped] {\tikz{\draw[->](0,0)--(0.1,0);}} (0,-1);
      \draw (1,0) to node[sloped] {\tikz{\draw[->](0,0)--(0.1,0);}} (1,-1);
      \draw (0,0) to node[sloped] {\tikz{\draw[->](0,0)--(0.1,0);}} (1,0);
      \draw (0,-1) to node  {\tikz{\draw[-](0,0)--(0.1,0);}} (1,-1);
      \node at (1/2,-1/2) {$h_1$};
      \draw (0,0) to node [fill=white] {$g_1$} (1,0);
      \draw (0,-1) to node [fill=white] {$g_1'$} (1,-1);
      \fill[black] (0,0) circle[radius=0.02];
      \fill[black] (1,0) circle[radius=0.02];
      \fill[black] (0,-1) circle[radius=0.02];
      \fill[black] (1,-1) circle[radius=0.02];
    \end{scope}
     \begin{scope}[xshift=3.0cm, yshift=-0.5cm]
      \fill[black] (0,0) circle[radius=0.02];
      \fill[black] (1,0) circle[radius=0.02];
       \fill[black] (2,0) circle[radius=0.02];
      \draw[<-] (1.1,0.1) to [out=30,in=150] node[midway] (g1) {} (1.9,0.1)   ;
	      \node[above] at (g1) {$g_1$};
	\draw[<-] (1.1,-0.1) to [out=-30,in=-150] node[midway] (g1') {} (1.9,-0.1)   ;
	      \node[below] at (g1') {$g_1'$};
	      \node at ($(g1)!0.5!(g1')$) {$h_1$};
	\draw[<-] (0.1,0) to [out=0,in=0] node[midway] (g2) {} (0.8,0);
	 \node[above] at (0.5,0) {$g_2$};
    \end{scope}
  \end{tikzpicture}
\end{center}
\caption{Horizontal compositon of $H$}
\label{d:2-mor_L_hori}
\end{figure}

We can reinterpret this horizontal composition ${\rm id}_{g_2} \#_1 h_1$ in categorical language as $\act{g_2}h_1$ when written in the language of groups, according to the following formula:
\begin{align*}
\act{g_2}h_1 (g_2\cdot g_1) &= \partial(\act{g_2}h_1 )g_2\cdot g_1\\
&= g_2 \cdot \partial(h_1) \cdot g_2^{-1} \cdot g_2 \cdot g_1\\
&= g_2 \cdot \partial(h_1) \cdot g_1\\
&= g_2 \cdot h_1(g_1)\\
&= g_2 \cdot g'_1 \\
&= {\rm id}_{g_2} \#_1 h_1 (g_2\#_1 g_1 ).
\end{align*}

For $g_1, g_2 \in G$ and $ {\rm id}_{g_1}, h_2\in H$, assume that $h_2(g_2) = g'_2$. There is a switched version of the horizontal composition:  $h_2 \#_1 {\rm id}_{g_1}  (g_2\#_1 g_1 ) = g'_2\#_1 g_1$. We represent $h_2 \#_1 {\rm id}_{g_1}$ in Figure~\ref{d:2-mor_R_hori}.

\begin{figure}
\begin{center}
  \begin{tikzpicture}[scale=2]
    \begin{scope}[xshift=0.0cm]
      \draw (0,0) to node[sloped] {\tikz{\draw[->](0,0)--(0.1,0);}} (0,-1);
      \draw (1,0) to node[sloped] {\tikz{\draw[->](0,0)--(0.1,0);}} (1,-1);
      \draw (0,0) to node[sloped] {\tikz{\draw[->](0,0)--(0.1,0);}} (1,0);
      \draw (0,-1) to node  {\tikz{\draw[-](0,0)--(0.1,0);}} (1,-1);
      \node at (1/2,-1/2) {$h_2$};
      \draw (0,0) to node [fill=white] {$g_2$} (1,0);
      \draw (0,-1) to node [fill=white] {$g_2'$} (1,-1);
      \fill[black] (0,0) circle[radius=0.02];
      \fill[black] (1,0) circle[radius=0.02];
      \fill[black] (0,-1) circle[radius=0.02];
      \fill[black] (1,-1) circle[radius=0.02];
    \end{scope}
    \begin{scope}[xshift=1.0cm]
      \draw (0,0) to node[sloped] {\tikz{\draw[->](0,0)--(0.1,0);}} (0,-1);
      \draw (1,0) to node[sloped] {\tikz{\draw[->](0,0)--(0.1,0);}} (1,-1);
      \draw (0,0) to node[sloped] {\tikz{\draw[->](0,0)--(0.1,0);}} (1,0);
      \draw (0,-1) to node  {\tikz{\draw[-](0,0)--(0.1,0);}} (1,-1);
      \node at (1/2,-1/2) {$$};
      \draw (0,0) to node [fill=white] {$g_1$} (1,0);
      \draw (0,-1) to node [fill=white] {$g_1$} (1,-1);
      \fill[black] (0,0) circle[radius=0.02];
      \fill[black] (1,0) circle[radius=0.02];
      \fill[black] (0,-1) circle[radius=0.02];
      \fill[black] (1,-1) circle[radius=0.02];
    \end{scope}
    \begin{scope}[xshift=3.0cm, yshift=-0.5cm]
      \fill[black] (0,0) circle[radius=0.02];
      \fill[black] (1,0) circle[radius=0.02];
       \fill[black] (2,0) circle[radius=0.02];
      \draw[<-] (0.1,0.1) to [out=30,in=150] node[midway] (g1) {} (0.9,0.1)   ;
	      \node[above] at (g1) {$g_2$};
	\draw[<-] (0.1,-0.1) to [out=-30,in=-150] node[midway] (g1') {} (0.9,-0.1)   ;
	      \node[below] at (g1') {$g_2'$};
	      \node at ($(g1)!0.5!(g1')$) {$h_2$};
	\draw[<-] (1.1,0) to [out=0,in=0] node[midway] (g2) {} (1.8,0);
	 \node[above] at (1.5,0) {$g_1$};
    \end{scope}
  \end{tikzpicture}
\end{center}
\caption{Horizontal compositon of $H$}
\label{d:2-mor_R_hori}
\end{figure}

By the same discussion as for ${\rm id}_{g_2} \#_1 h_1$, the composition $h_2 \#_1 {\rm id}_{g_1}$ can be reinterpreted as $h_2$, according to the following formula:
\begin{align*}
h_2 (g_2\cdot g_1) &=  \partial(h_2)g_2\cdot g_1\\
&=h_2(g_2)\cdot h_1 \\
&=g'_2 \cdot h_1 \\
&= h_2 \#_1 {\rm id}_{g_1}  (g_2\#_1 g_1 ) = g'_2\#_1 g_1
\end{align*}

For $h_1, h_2 \in H$ and $g_1, g_2 \in G$, assume that $h_1(g_1) = g'_1$ and $h_2(g_2) = g'_2$. We would like to define the horizontal composition $h_2\#_1 h_1 (g_2\#_1 g_1 ) = g'_2\#_1 g'_1$ as illustrated in Figure~\ref{d:not_well_hor}, but this composition is not well-defined.
\begin{figure}
\begin{center}
  \begin{tikzpicture}[scale=2]
    \begin{scope}[xshift=0.0cm]
      \draw (0,0) to node[sloped] {\tikz{\draw[->](0,0)--(0.1,0);}} (0,-1);
      \draw (1,0) to node[sloped] {\tikz{\draw[->](0,0)--(0.1,0);}} (1,-1);
      \draw (0,0) to node[sloped] {\tikz{\draw[->](0,0)--(0.1,0);}} (1,0);
      \draw (0,-1) to node  {\tikz{\draw[-](0,0)--(0.1,0);}} (1,-1);
      \node at (1/2,-1/2) {$h_2$};
      \draw (0,0) to node [fill=white] {$g_2$} (1,0);
      \draw (0,-1) to node [fill=white] {$g_2'$} (1,-1);
      \fill[black] (0,0) circle[radius=0.02];
      \fill[black] (1,0) circle[radius=0.02];
      \fill[black] (0,-1) circle[radius=0.02];
      \fill[black] (1,-1) circle[radius=0.02];
    \end{scope}
    \begin{scope}[xshift=1.0cm]
      \draw (0,0) to node[sloped] {\tikz{\draw[->](0,0)--(0.1,0);}} (0,-1);
      \draw (1,0) to node[sloped] {\tikz{\draw[->](0,0)--(0.1,0);}} (1,-1);
      \draw (0,0) to node[sloped] {\tikz{\draw[->](0,0)--(0.1,0);}} (1,0);
      \draw (0,-1) to node  {\tikz{\draw[-](0,0)--(0.1,0);}} (1,-1);
      \node at (1/2,-1/2) {$h_1$};
      \draw (0,0) to node [fill=white] {$g_1$} (1,0);
      \draw (0,-1) to node [fill=white] {$g_1'$} (1,-1);
      \fill[black] (0,0) circle[radius=0.02];
      \fill[black] (1,0) circle[radius=0.02];
      \fill[black] (0,-1) circle[radius=0.02];
      \fill[black] (1,-1) circle[radius=0.02];
    \end{scope}
  \end{tikzpicture}
\end{center}
\caption{Image of $h_2\#_1 h_1$}
\label{d:not_well_hor}
\end{figure}
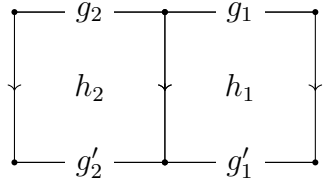

\begin{figure}
\begin{center}
  \begin{tikzpicture}[scale=2]
    \begin{scope}[xshift=0.0cm]
      \draw (0,0) to node[sloped] {\tikz{\draw[->](0,0)--(0.1,0);}} (0,-1);
      \draw (1,0) to node[sloped] {\tikz{\draw[->](0,0)--(0.1,0);}} (1,-1);
      \draw (0,0) to node[sloped] {\tikz{\draw[->](0,0)--(0.1,0);}} (1,0);
      \draw (0,-1) to node  {\tikz{\draw[-](0,0)--(0.1,0);}} (1,-1);
      \node at (1/2,-1/2) {$h_2$};
      \draw (0,0) to node [fill=white] {$g_2$} (1,0);
      \draw (0,-1) to node [fill=white] {$g_2'$} (1,-1);
      \fill[black] (0,0) circle[radius=0.02];
      \fill[black] (1,0) circle[radius=0.02];
      \fill[black] (0,-1) circle[radius=0.02];
      \fill[black] (1,-1) circle[radius=0.02];
    \end{scope}
    \begin{scope}[xshift=1.0cm]
      \draw (0,0) to node[sloped] {\tikz{\draw[->](0,0)--(0.1,0);}} (0,-1);
      \draw (1,0) to node[sloped] {\tikz{\draw[->](0,0)--(0.1,0);}} (1,-1);
      \draw (0,0) to node[sloped] {\tikz{\draw[->](0,0)--(0.1,0);}} (1,0);
      \draw (0,-1) to node  {\tikz{\draw[-](0,0)--(0.1,0);}} (1,-1);
      \node at (1/2,-1/2) {$$};
      \draw (0,0) to node [fill=white] {$g_1$} (1,0);
      \draw (0,-1) to node [fill=white] {$g_1$} (1,-1);
      \fill[black] (0,0) circle[radius=0.02];
      \fill[black] (1,0) circle[radius=0.02];
      \fill[black] (0,-1) circle[radius=0.02];
      \fill[black] (1,-1) circle[radius=0.02];
    \end{scope}
        \begin{scope}[xshift=0.0cm, yshift = -1.0cm]
      \draw (0,0) to node[sloped] {\tikz{\draw[->](0,0)--(0.1,0);}} (0,-1);
      \draw (1,0) to node[sloped] {\tikz{\draw[->](0,0)--(0.1,0);}} (1,-1);
      \draw (0,0) to node[sloped] {\tikz{\draw[->](0,0)--(0.1,0);}} (1,0);
      \draw (0,-1) to node  {\tikz{\draw[-](0,0)--(0.1,0);}} (1,-1);
      \node at (1/2,-1/2) {$$};
      \draw (0,0) to node [fill=white] {$g_2'$} (1,0);
      \draw (0,-1) to node [fill=white] {$g_2'$} (1,-1);
      \fill[black] (0,0) circle[radius=0.02];
      \fill[black] (1,0) circle[radius=0.02];
      \fill[black] (0,-1) circle[radius=0.02];
      \fill[black] (1,-1) circle[radius=0.02];
    \end{scope}
    \begin{scope}[xshift=1.0cm, yshift = -1.0cm]
      \draw (0,0) to node[sloped] {\tikz{\draw[->](0,0)--(0.1,0);}} (0,-1);
      \draw (1,0) to node[sloped] {\tikz{\draw[->](0,0)--(0.1,0);}} (1,-1);
      \draw (0,0) to node[sloped] {\tikz{\draw[->](0,0)--(0.1,0);}} (1,0);
      \draw (0,-1) to node  {\tikz{\draw[-](0,0)--(0.1,0);}} (1,-1);
      \node at (1/2,-1/2) {$h_1$};
      \draw (0,0) to node [fill=white] {$g_1$} (1,0);
      \draw (0,-1) to node [fill=white] {$g_1'$} (1,-1);
      \fill[black] (0,0) circle[radius=0.02];
      \fill[black] (1,0) circle[radius=0.02];
      \fill[black] (0,-1) circle[radius=0.02];
      \fill[black] (1,-1) circle[radius=0.02];
    \end{scope}
    
       \begin{scope}[xshift=3.0cm, yshift=-1.0cm]
 \fill[black] (0,0) circle[radius=0.02];
      \fill[black] (1,0) circle[radius=0.02];
      \fill[black] (2,0) circle[radius=0.02];
      \draw[<-] (0.1,0.1) to [out=30,in=150] node[midway] (g2) {} (0.9,0.1)   ;
	      \node[above] at (g2) {$g_2$};
	\draw[<-] (0.1,-0.1) to [out=-30,in=-150] node[midway] (g2') {} (0.9,-0.1)   ;
	      \node[below] at (g2') {$g_2'$};
	      \node at ($(g2)!0.5!(g2')$) {$(h_2,1)$};
	\draw[<-] (1.1,0.1) to [out=30,in=150] node[midway] (g1) {} (1.9,0.1)   ;
	      \node[above] at (g1) {$g_1$};
	\draw[<-] (1.1,-0.1) to [out=-30,in=-150] node[midway] (g1') {} (1.9,-0.1)   ;
	      \node[below] at (g1') {$g_1'$};
	      \node at ($(g1)!0.5!(g1')$) {$(h_1,2)$};      
    \end{scope}
    
  \end{tikzpicture}
\end{center}
\caption{Two types of $({\rm id}_{g'_2}\#_1 h_1) \#_2 (h_2\#_1 {\rm id}_{g_1})$}
\label{d:h1h2}
\end{figure}
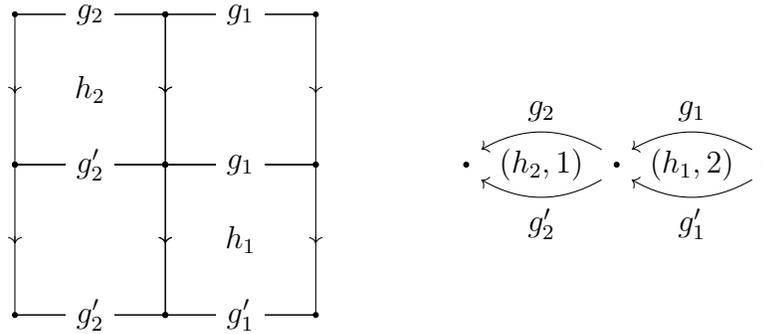

The horizontal composition $h_2\#_1 h_1$ can be expressed in two equivalent ways: 
\[
({\rm id}_{g'_2}\#_1 h_1) \#_2 (h_2\#_1 {\rm id}_{g_1})
\]
and 
\[(h_2\#_1 {\rm id}_{g'_1}) \#_2 ({\rm id}_{g_2}\#_1 h_1).
\] 
These two compositions are illustrated in Figures~\ref{d:h1h2} and~\ref{d:h2h1}, respectively.

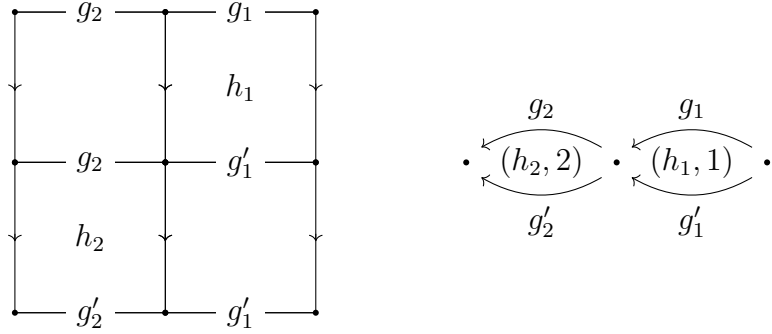
\begin{figure}[h]
\begin{center}
  \begin{tikzpicture}[scale=2]
    \begin{scope}[xshift=0.0cm]
      \draw (0,0) to node[sloped] {\tikz{\draw[->](0,0)--(0.1,0);}} (0,-1);
      \draw (1,0) to node[sloped] {\tikz{\draw[->](0,0)--(0.1,0);}} (1,-1);
      \draw (0,0) to node[sloped] {\tikz{\draw[->](0,0)--(0.1,0);}} (1,0);
      \draw (0,-1) to node  {\tikz{\draw[-](0,0)--(0.1,0);}} (1,-1);
      \node at (1/2,-1/2) {$$};
      \draw (0,0) to node [fill=white] {$g_2$} (1,0);
      \draw (0,-1) to node [fill=white] {$g_2$} (1,-1);
      \fill[black] (0,0) circle[radius=0.02];
      \fill[black] (1,0) circle[radius=0.02];
      \fill[black] (0,-1) circle[radius=0.02];
      \fill[black] (1,-1) circle[radius=0.02];
    \end{scope}
    \begin{scope}[xshift=1.0cm]
      \draw (0,0) to node[sloped] {\tikz{\draw[->](0,0)--(0.1,0);}} (0,-1);
      \draw (1,0) to node[sloped] {\tikz{\draw[->](0,0)--(0.1,0);}} (1,-1);
      \draw (0,0) to node[sloped] {\tikz{\draw[->](0,0)--(0.1,0);}} (1,0);
      \draw (0,-1) to node  {\tikz{\draw[-](0,0)--(0.1,0);}} (1,-1);
      \node at (1/2,-1/2) {$h_1$};
      \draw (0,0) to node [fill=white] {$g_1$} (1,0);
      \draw (0,-1) to node [fill=white] {$g_1'$} (1,-1);
      \fill[black] (0,0) circle[radius=0.02];
      \fill[black] (1,0) circle[radius=0.02];
      \fill[black] (0,-1) circle[radius=0.02];
      \fill[black] (1,-1) circle[radius=0.02];
    \end{scope}
        \begin{scope}[xshift=0.0cm, yshift = -1.0cm]
      \draw (0,0) to node[sloped] {\tikz{\draw[->](0,0)--(0.1,0);}} (0,-1);
      \draw (1,0) to node[sloped] {\tikz{\draw[->](0,0)--(0.1,0);}} (1,-1);
      \draw (0,0) to node[sloped] {\tikz{\draw[->](0,0)--(0.1,0);}} (1,0);
      \draw (0,-1) to node  {\tikz{\draw[-](0,0)--(0.1,0);}} (1,-1);
      \node at (1/2,-1/2) {$h_2$};
      \draw (0,0) to node [fill=white] {$g_2$} (1,0);
      \draw (0,-1) to node [fill=white] {$g_2'$} (1,-1);
      \fill[black] (0,0) circle[radius=0.02];
      \fill[black] (1,0) circle[radius=0.02];
      \fill[black] (0,-1) circle[radius=0.02];
      \fill[black] (1,-1) circle[radius=0.02];
    \end{scope}
    \begin{scope}[xshift=1.0cm, yshift = -1.0cm]
      \draw (0,0) to node[sloped] {\tikz{\draw[->](0,0)--(0.1,0);}} (0,-1);
      \draw (1,0) to node[sloped] {\tikz{\draw[->](0,0)--(0.1,0);}} (1,-1);
      \draw (0,0) to node[sloped] {\tikz{\draw[->](0,0)--(0.1,0);}} (1,0);
      \draw (0,-1) to node  {\tikz{\draw[-](0,0)--(0.1,0);}} (1,-1);
      \node at (1/2,-1/2) {$$};
      \draw (0,0) to node [fill=white] {$g_1'$} (1,0);
      \draw (0,-1) to node [fill=white] {$g_1'$} (1,-1);
      \fill[black] (0,0) circle[radius=0.02];
      \fill[black] (1,0) circle[radius=0.02];
      \fill[black] (0,-1) circle[radius=0.02];
      \fill[black] (1,-1) circle[radius=0.02];
    \end{scope}
    \begin{scope}[xshift=3.0cm, yshift=-1.0cm]
 \fill[black] (0,0) circle[radius=0.02];
      \fill[black] (1,0) circle[radius=0.02];
      \fill[black] (2,0) circle[radius=0.02];
      \draw[<-] (0.1,0.1) to [out=30,in=150] node[midway] (g2) {} (0.9,0.1)   ;
	      \node[above] at (g2) {$g_2$};
	\draw[<-] (0.1,-0.1) to [out=-30,in=-150] node[midway] (g2') {} (0.9,-0.1)   ;
	      \node[below] at (g2') {$g_2'$};
	      \node at ($(g2)!0.5!(g2')$) {$(h_2,2)$};
	\draw[<-] (1.1,0.1) to [out=30,in=150] node[midway] (g1) {} (1.9,0.1)   ;
	      \node[above] at (g1) {$g_1$};
	\draw[<-] (1.1,-0.1) to [out=-30,in=-150] node[midway] (g1') {} (1.9,-0.1)   ;
	      \node[below] at (g1') {$g_1'$};
	      \node at ($(g1)!0.5!(g1')$) {$(h_1,1)$};      
    \end{scope}
  \end{tikzpicture}
\end{center}
\caption{Two types of $(h_2\#_1 {\rm id}_{g'_1}) \#_2 ({\rm id}_{g_2}\#_1 h_1)$}
\label{d:h2h1}
\end{figure}

In the lattice-type representation, we must distinguish which element of $H$ acts first. To do this, we introduce an ordered pair, whose first component is the element of $H$, and whose second component is a natural number indicating the order in which the elements of $H$ are applied.

From the discussion above, we obtain 
\[
({\rm id}_{g'_2}\#_1 h_1) \#_2 (h_2\#_1 {\rm id}_{g_1}) \coloneqq \act{g'_2}h_1\cdot h_2 = \act{\partial(h_2)g_2}h_1\cdot h_2,
\]
and 
\[
(h_2\#_1 {\rm id}_{g'_1}) \#_2 ({\rm id}_{g_2}\#_1 h_1) \coloneqq h_2 \cdot \act{g_2}h_1.
\]
As in the case of $H$, each element of $L$ can be regarded as a map
\[
l : H \longrightarrow H, \qquad h \longmapsto \partial(l)\,h ,
\]
This correspondence is illustrated in Figure~\ref{d:l}.
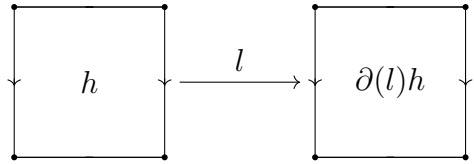
\begin{figure}
\begin{center}
  \begin{tikzpicture}[scale=2]
    \begin{scope}[xshift=0.0cm]
      \draw (0,0) to node[sloped] {\tikz{\draw[->](0,0)--(0.1,0);}} (0,-1);
      \draw (1,0) to node[sloped] {\tikz{\draw[->](0,0)--(0.1,0);}} (1,-1);
      \draw (0,0) to node[sloped] {\tikz{\draw[-](0,0)--(0.1,0);}} (1,0);
      \draw (0,-1) to node  {\tikz{\draw[-](0,0)--(0.1,0);}} (1,-1);
      \node at (1/2,-1/2) {$h$};
      \draw (0,0) to node {\tikz{\draw[-](0,0)--(0.1,0);}} (1,0);
      \draw (0,-1) to node {\tikz{\draw[-](0,0)--(0.1,0);}} (1,-1);
      \fill[black] (0,0) circle[radius=0.02];
      \fill[black] (1,0) circle[radius=0.02];
      \fill[black] (0,-1) circle[radius=0.02];
      \fill[black] (1,-1) circle[radius=0.02];
      \draw[->] (1.1,-1/2) to (1.9,-1/2);
      \node at (1.5,-1/2) [above]{$l$};
    \end{scope}
    \begin{scope}[xshift=2.0cm]
      \draw (0,0) to node[sloped] {\tikz{\draw[->](0,0)--(0.1,0);}} (0,-1);
      \draw (1,0) to node[sloped] {\tikz{\draw[->](0,0)--(0.1,0);}} (1,-1);
      \draw (0,0) to node[sloped] {\tikz{\draw[-](0,0)--(0.1,0);}} (1,0);
      \draw (0,-1) to node  {\tikz{\draw[-](0,0)--(0.1,0);}} (1,-1);
      \node at (1/2,-1/2) {$\del(l)h$};
      \draw (0,0) to node {\tikz{\draw[-](0,0)--(0.1,0);}} (1,0);
      \draw (0,-1) to node {\tikz{\draw[-](0,0)--(0.1,0);}} (1,-1);
      \fill[black] (0,0) circle[radius=0.02];
      \fill[black] (1,0) circle[radius=0.02];
      \fill[black] (0,-1) circle[radius=0.02];
      \fill[black] (1,-1) circle[radius=0.02];
	\end{scope}
  \end{tikzpicture}
\end{center}
\caption{Diagram of $L$}
\label{d:l}
\end{figure}

We denote the vertical composition of $l_2,l_1 \in L$ as $l_2 \#_3 l_1$. In diagrams, the vertical composition $l_2 \#_3 l_1$ is represented in Figure~\ref{d:ver_of_l}.
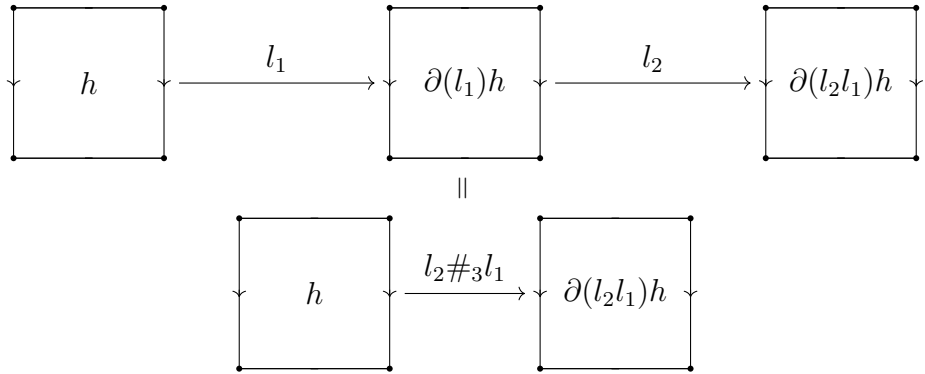
\begin{figure}[!]
\begin{center}
  \begin{tikzpicture}[scale=2]
    \begin{scope}[xshift=0.0cm]
      \draw[->] (1.1,-1/2) to (2.4,-1/2);
      \node at (1.75,-1/2) [above]{$l_1$};
      \draw (0,0) to node[sloped] {\tikz{\draw[->](0,0)--(0.1,0);}} (0,-1);
      \draw (1,0) to node[sloped] {\tikz{\draw[->](0,0)--(0.1,0);}} (1,-1);
      \draw (0,0) to node[sloped] {\tikz{\draw[-](0,0)--(0.1,0);}} (1,0);
      \draw (0,-1) to node  {\tikz{\draw[-](0,0)--(0.1,0);}} (1,-1);
      \node at (1/2,-1/2) {$h$};
      \draw (0,0) to node {\tikz{\draw[-](0,0)--(0.1,0);}} (1,0);
      \draw (0,-1) to node {\tikz{\draw[-](0,0)--(0.1,0);}}(1,-1);
      \fill[black] (0,0) circle[radius=0.02];
      \fill[black] (1,0) circle[radius=0.02];
      \fill[black] (0,-1) circle[radius=0.02];
      \fill[black] (1,-1) circle[radius=0.02];   
	  \end{scope}
	  \begin{scope}[xshift=2.5cm]
      \draw[->] (1.1,-1/2) to (2.4,-1/2);
      \node at (1.75,-1/2) [above]{$l_2$};
      \draw (0,0) to node[sloped] {\tikz{\draw[->](0,0)--(0.1,0);}} (0,-1);
      \draw (1,0) to node[sloped] {\tikz{\draw[->](0,0)--(0.1,0);}} (1,-1);
      \draw (0,0) to node[sloped] {\tikz{\draw[-](0,0)--(0.1,0);}} (1,0);
      \draw (0,-1) to node  {\tikz{\draw[-](0,0)--(0.1,0);}} (1,-1);
      \node at (1/2,-1/2) {$\del(l_1)h$};
      \draw (0,0) to node {\tikz{\draw[-](0,0)--(0.1,0);}} (1,0);
      \draw (0,-1) to node {\tikz{\draw[-](0,0)--(0.1,0);}}(1,-1);
      \fill[black] (0,0) circle[radius=0.02];
      \fill[black] (1,0) circle[radius=0.02];
      \fill[black] (0,-1) circle[radius=0.02];
      \fill[black] (1,-1) circle[radius=0.02];   
	  \end{scope}
	    \begin{scope}[xshift=5.0cm]
      \draw (0,0) to node[sloped] {\tikz{\draw[->](0,0)--(0.1,0);}} (0,-1);
      \draw (1,0) to node[sloped] {\tikz{\draw[->](0,0)--(0.1,0);}} (1,-1);
      \draw (0,0) to node[sloped] {\tikz{\draw[-](0,0)--(0.1,0);}} (1,0);
      \draw (0,-1) to node  {\tikz{\draw[-](0,0)--(0.1,0);}} (1,-1);
      \node at (1/2,-1/2) {$\del(l_2 l_1)h$};
      \draw (0,0) to node {\tikz{\draw[-](0,0)--(0.1,0);}} (1,0);
      \draw (0,-1) to node {\tikz{\draw[-](0,0)--(0.1,0);}}(1,-1);
      \fill[black] (0,0) circle[radius=0.02];
      \fill[black] (1,0) circle[radius=0.02];
      \fill[black] (0,-1) circle[radius=0.02];
      \fill[black] (1,-1) circle[radius=0.02];  
	  \end{scope}
	  \node at (3,-1.2) { \rotatebox{90}{$=$}};
	  \begin{scope}[xshift=1.5cm ,yshift=-1.4cm]
      \draw (0,0) to node[sloped] {\tikz{\draw[->](0,0)--(0.1,0);}} (0,-1);
      \draw (1,0) to node[sloped] {\tikz{\draw[->](0,0)--(0.1,0);}} (1,-1);
      \draw (0,0) to node[sloped] {\tikz{\draw[-](0,0)--(0.1,0);}} (1,0);
      \draw (0,-1) to node  {\tikz{\draw[-](0,0)--(0.1,0);}} (1,-1);
      \node at (1/2,-1/2) {$h$};
      \draw (0,0) to node {\tikz{\draw[-](0,0)--(0.1,0);}} (1,0);
      \draw (0,-1) to node {\tikz{\draw[-](0,0)--(0.1,0);}} (1,-1);
      \fill[black] (0,0) circle[radius=0.02];
      \fill[black] (1,0) circle[radius=0.02];
      \fill[black] (0,-1) circle[radius=0.02];
      \fill[black] (1,-1) circle[radius=0.02];
      \draw[->] (1.1,-1/2) to (1.9,-1/2);
      \node at (1.5,-1/2) [above]{$l_2 \#_3 l_1$};
    \begin{scope}[xshift=2.0cm]
      \draw (0,0) to node[sloped] {\tikz{\draw[->](0,0)--(0.1,0);}} (0,-1);
      \draw (1,0) to node[sloped] {\tikz{\draw[->](0,0)--(0.1,0);}} (1,-1);
      \draw (0,0) to node[sloped] {\tikz{\draw[-](0,0)--(0.1,0);}} (1,0);
      \draw (0,-1) to node  {\tikz{\draw[-](0,0)--(0.1,0);}} (1,-1);
      \node at (1/2,-1/2) {$\del(l_2 l_1)h$};
      \draw (0,0) to node {\tikz{\draw[-](0,0)--(0.1,0);}} (1,0);
      \draw (0,-1) to node {\tikz{\draw[-](0,0)--(0.1,0);}} (1,-1);
      \fill[black] (0,0) circle[radius=0.02];
      \fill[black] (1,0) circle[radius=0.02];
      \fill[black] (0,-1) circle[radius=0.02];
      \fill[black] (1,-1) circle[radius=0.02];
	\end{scope}
	\end{scope}
  \end{tikzpicture}
\end{center}
\caption{Vertical compositon of $L$}
\label{d:ver_of_l}
\end{figure}

For each $g_1,g_1',g_2,g_2'\in G$, $h_1,h_1', h_2,h_2'\in H$, and $l_1,l_2 \in L$, such that
\begin{equation*}
    \begin{aligned}
h_1(g_1) &= g_1',\ h_1'(g_1) = g_1'\\
h_2(g_2) &= g_2',\ h_2'(g_2) = g_2'\\
l_1(h_1)&= h_1',\ l_2(h_2) = h_2' .
    \end{aligned}
\end{equation*}
the elements ${\rm id}_{{\rm id}_{g_2}}\#_1 l_1$, and $l_2 \#_1{\rm id}_{{\rm id}_{g_1}}$ can be determined  in a manner analogous to the case of $H$ , and can be represented 
in Figures~\ref{d:hor1_of_l_1} and~\ref{d:hor1_of_l_2}, respectively.
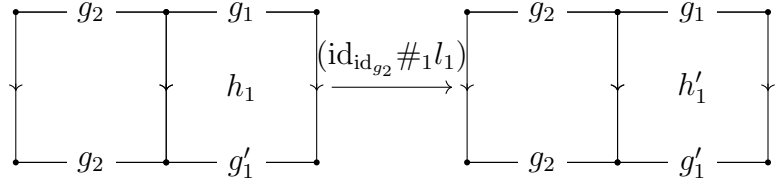
\begin{figure}
\begin{center}
  \begin{tikzpicture}[scale=2]
    \begin{scope}[xshift=0.0cm]
      \draw (0,0) to node[sloped] {\tikz{\draw[->](0,0)--(0.1,0);}} (0,-1);
      \draw (1,0) to node[sloped] {\tikz{\draw[->](0,0)--(0.1,0);}} (1,-1);
      \draw (0,0) to node[sloped] {\tikz{\draw[-](0,0)--(0.1,0);}} (1,0);
      \draw (0,-1) to node  {\tikz{\draw[-](0,0)--(0.1,0);}} (1,-1);
      \node at (1/2,-1/2) {$$};
      \draw (0,0) to node [fill=white] {$g_2$} (1,0);
      \draw (0,-1) to node[fill=white] {$g_2$} (1,-1);
      \fill[black] (0,0) circle[radius=0.02];
      \fill[black] (1,0) circle[radius=0.02];
      \fill[black] (0,-1) circle[radius=0.02];
      \fill[black] (1,-1) circle[radius=0.02];
       \begin{scope}[xshift=1.0cm]
      \draw (0,0) to node[sloped] {\tikz{\draw[->](0,0)--(0.1,0);}} (0,-1);
      \draw (1,0) to node[sloped] {\tikz{\draw[->](0,0)--(0.1,0);}} (1,-1);
      \draw (0,0) to node[sloped] {\tikz{\draw[-](0,0)--(0.1,0);}} (1,0);
      \draw (0,-1) to node  {\tikz{\draw[-](0,0)--(0.1,0);}} (1,-1);
      \node at (1/2,-1/2) {$h_1$};
      \draw (0,0) to node [fill=white] {$g_1$} (1,0);
      \draw (0,-1) to node [fill=white] {$g_1'$} (1,-1);
      \fill[black] (0,0) circle[radius=0.02];
      \fill[black] (1,0) circle[radius=0.02];
      \fill[black] (0,-1) circle[radius=0.02];
      \fill[black] (1,-1) circle[radius=0.02]; 
       \draw[->] (1.1,-1/2) to (1.9,-1/2);  
      \node at (1.5,-1/2) [above]{$({\rm id}_{{\rm id}_{g_2}}\#_1 l_1)$};
	\end{scope}
    \begin{scope}[xshift=3.0cm]
      \draw (0,0) to node[sloped] {\tikz{\draw[->](0,0)--(0.1,0);}} (0,-1);
      \draw (1,0) to node[sloped] {\tikz{\draw[->](0,0)--(0.1,0);}} (1,-1);
      \draw (0,0) to node[sloped] {\tikz{\draw[-](0,0)--(0.1,0);}} (1,0);
      \draw (0,-1) to node  {\tikz{\draw[-](0,0)--(0.1,0);}} (1,-1);
      \node at (1/2,-1/2) {$$};
      \draw (0,0) to node [fill=white] {$g_2$} (1,0);
      \draw (0,-1) to node [fill=white] {$g_2$}(1,-1);
      \fill[black] (0,0) circle[radius=0.02];
      \fill[black] (1,0) circle[radius=0.02];
      \fill[black] (0,-1) circle[radius=0.02];
      \fill[black] (1,-1) circle[radius=0.02];
	\end{scope}
	 \begin{scope}[xshift=4.0cm]
      \draw (0,0) to node[sloped] {\tikz{\draw[->](0,0)--(0.1,0);}} (0,-1);
      \draw (1,0) to node[sloped] {\tikz{\draw[->](0,0)--(0.1,0);}} (1,-1);
      \draw (0,0) to node[sloped] {\tikz{\draw[-](0,0)--(0.1,0);}} (1,0);
      \draw (0,-1) to node  {\tikz{\draw[-](0,0)--(0.1,0);}} (1,-1);
      \node at (1/2,-1/2) {$h_1'$};
      \draw (0,0) to node [fill=white] {$g_1$} (1,0);
      \draw (0,-1) to node [fill=white] {$g_1'$} (1,-1);
      \fill[black] (0,0) circle[radius=0.02];
      \fill[black] (1,0) circle[radius=0.02];
      \fill[black] (0,-1) circle[radius=0.02];
      \fill[black] (1,-1) circle[radius=0.02];
	\end{scope}
	 \end{scope}
  \end{tikzpicture}
\end{center}
\caption{Diagram of ${\rm id}_{{\rm id}_{g_2}}\#_1 l_1$}
\label{d:hor1_of_l_1}
\end{figure}
\begin{figure}
\begin{center}
  \begin{tikzpicture}[scale=2]
    \begin{scope}[xshift=0.0cm]
      \draw (0,0) to node[sloped] {\tikz{\draw[->](0,0)--(0.1,0);}} (0,-1);
      \draw (1,0) to node[sloped] {\tikz{\draw[->](0,0)--(0.1,0);}} (1,-1);
      \draw (0,0) to node[sloped] {\tikz{\draw[-](0,0)--(0.1,0);}} (1,0);
      \draw (0,-1) to node  {\tikz{\draw[-](0,0)--(0.1,0);}} (1,-1);
      \node at (1/2,-1/2) {$h_2$};
      \draw (0,0) to node [fill=white] {$g_2$} (1,0);
      \draw (0,-1) to node[fill=white] {$g_2'$} (1,-1);
      \fill[black] (0,0) circle[radius=0.02];
      \fill[black] (1,0) circle[radius=0.02];
      \fill[black] (0,-1) circle[radius=0.02];
      \fill[black] (1,-1) circle[radius=0.02];
       \begin{scope}[xshift=1.0cm]
      \draw (0,0) to node[sloped] {\tikz{\draw[->](0,0)--(0.1,0);}} (0,-1);
      \draw (1,0) to node[sloped] {\tikz{\draw[->](0,0)--(0.1,0);}} (1,-1);
      \draw (0,0) to node[sloped] {\tikz{\draw[-](0,0)--(0.1,0);}} (1,0);
      \draw (0,-1) to node  {\tikz{\draw[-](0,0)--(0.1,0);}} (1,-1);
      \node at (1/2,-1/2) {$$};
      \draw (0,0) to node [fill=white] {$g_1$} (1,0);
      \draw (0,-1) to node [fill=white] {$g_1$} (1,-1);
      \fill[black] (0,0) circle[radius=0.02];
      \fill[black] (1,0) circle[radius=0.02];
      \fill[black] (0,-1) circle[radius=0.02];
      \fill[black] (1,-1) circle[radius=0.02]; 
       \draw[->] (1.1,-1/2) to (1.9,-1/2);  
      \node at (1.5,-1/2) [above]{$(l_2 \#_1{\rm id}_{{\rm id}_{g_1}})$};
	\end{scope}
    \begin{scope}[xshift=3.0cm]
      \draw (0,0) to node[sloped] {\tikz{\draw[->](0,0)--(0.1,0);}} (0,-1);
      \draw (1,0) to node[sloped] {\tikz{\draw[->](0,0)--(0.1,0);}} (1,-1);
      \draw (0,0) to node[sloped] {\tikz{\draw[-](0,0)--(0.1,0);}} (1,0);
      \draw (0,-1) to node  {\tikz{\draw[-](0,0)--(0.1,0);}} (1,-1);
      \node at (1/2,-1/2) {$h_2'$};
      \draw (0,0) to node [fill=white] {$g_2$} (1,0);
      \draw (0,-1) to node [fill=white] {$g_2'$}(1,-1);
      \fill[black] (0,0) circle[radius=0.02];
      \fill[black] (1,0) circle[radius=0.02];
      \fill[black] (0,-1) circle[radius=0.02];
      \fill[black] (1,-1) circle[radius=0.02];
	\end{scope}
	 \begin{scope}[xshift=4.0cm]
      \draw (0,0) to node[sloped] {\tikz{\draw[->](0,0)--(0.1,0);}} (0,-1);
      \draw (1,0) to node[sloped] {\tikz{\draw[->](0,0)--(0.1,0);}} (1,-1);
      \draw (0,0) to node[sloped] {\tikz{\draw[-](0,0)--(0.1,0);}} (1,0);
      \draw (0,-1) to node  {\tikz{\draw[-](0,0)--(0.1,0);}} (1,-1);
      \node at (1/2,-1/2) {$$};
      \draw (0,0) to node [fill=white] {$g_1$} (1,0);
      \draw (0,-1) to node [fill=white] {$g_1$} (1,-1);
      \fill[black] (0,0) circle[radius=0.02];
      \fill[black] (1,0) circle[radius=0.02];
      \fill[black] (0,-1) circle[radius=0.02];
      \fill[black] (1,-1) circle[radius=0.02];
	\end{scope}
	 \end{scope}
  \end{tikzpicture}
\end{center}
\caption{Diagram of $l_2 \#_1{\rm id}_{{\rm id}_{g_1}}$}
\label{d:hor1_of_l_2}
\end{figure}
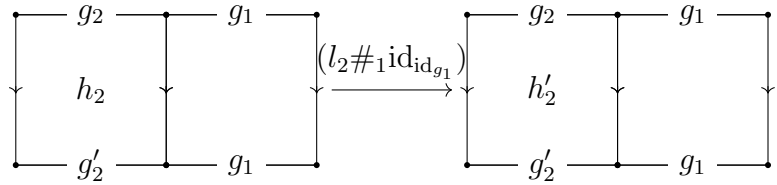
We can correspond ${\rm id}_{{\rm id}_{g_2}}\#_1 l_1$ as $\act{g_2}l_1$, and $l_2 \#_1{\rm id}_{{\rm id}_{g_1}}$ as $l_2$ in 3-crossed module.

 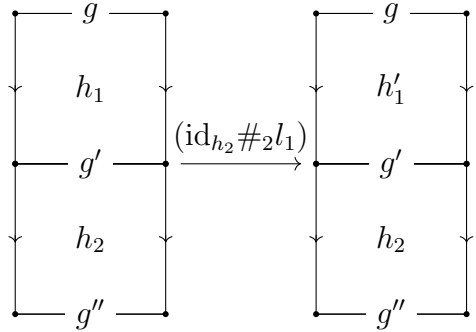
\begin{figure}[!]
 \begin{center}
  \begin{tikzpicture}[scale=2]
    \begin{scope}[xshift=0.0cm]
      \draw (0,0) to node[sloped] {\tikz{\draw[->](0,0)--(0.1,0);}} (0,-1);
      \draw (1,0) to node[sloped] {\tikz{\draw[->](0,0)--(0.1,0);}} (1,-1);
      \draw (0,0) to node[sloped] {\tikz{\draw[-](0,0)--(0.1,0);}} (1,0);
      \draw (0,-1) to node  {\tikz{\draw[-](0,0)--(0.1,0);}} (1,-1);
      \node at (1/2,-1/2) {$h_1$};
      \draw (0,0) to node [fill=white] {$g$} (1,0);
      \draw (0,-1) to node[fill=white] {$g'$} (1,-1);
      \fill[black] (0,0) circle[radius=0.02];
      \fill[black] (1,0) circle[radius=0.02];
      \fill[black] (0,-1) circle[radius=0.02];
      \fill[black] (1,-1) circle[radius=0.02];
       \draw[->] (1.1,-1) to (1.9,-1);  
      \node at (1.5,-1) [above]{$({\rm id}_{h_2}\#_2 l_1)$};
       \begin{scope}[yshift=-1.0cm]
      \draw (0,0) to node[sloped] {\tikz{\draw[->](0,0)--(0.1,0);}} (0,-1);
      \draw (1,0) to node[sloped] {\tikz{\draw[->](0,0)--(0.1,0);}} (1,-1);
      \draw (0,0) to node[sloped] {\tikz{\draw[-](0,0)--(0.1,0);}} (1,0);
      \draw (0,-1) to node  {\tikz{\draw[-](0,0)--(0.1,0);}} (1,-1);
      \node at (1/2,-1/2) {$h_2$};
      \draw (0,0) to node [fill=white] {$g'$} (1,0);
      \draw (0,-1) to node [fill=white] {$g''$} (1,-1);
      \fill[black] (0,0) circle[radius=0.02];
      \fill[black] (1,0) circle[radius=0.02];
      \fill[black] (0,-1) circle[radius=0.02];
      \fill[black] (1,-1) circle[radius=0.02]; 
	\end{scope}
    \begin{scope}[xshift=2.0cm]
      \draw (0,0) to node[sloped] {\tikz{\draw[->](0,0)--(0.1,0);}} (0,-1);
      \draw (1,0) to node[sloped] {\tikz{\draw[->](0,0)--(0.1,0);}} (1,-1);
      \draw (0,0) to node[sloped] {\tikz{\draw[-](0,0)--(0.1,0);}} (1,0);
      \draw (0,-1) to node  {\tikz{\draw[-](0,0)--(0.1,0);}} (1,-1);
      \node at (1/2,-1/2) {$h_1'$};
      \draw (0,0) to node [fill=white] {$g$} (1,0);
      \draw (0,-1) to node [fill=white] {$g'$}(1,-1);
      \fill[black] (0,0) circle[radius=0.02];
      \fill[black] (1,0) circle[radius=0.02];
      \fill[black] (0,-1) circle[radius=0.02];
      \fill[black] (1,-1) circle[radius=0.02];
	\end{scope}
	 \begin{scope}[xshift=2.0cm,yshift=-1.0cm]
      \draw (0,0) to node[sloped] {\tikz{\draw[->](0,0)--(0.1,0);}} (0,-1);
      \draw (1,0) to node[sloped] {\tikz{\draw[->](0,0)--(0.1,0);}} (1,-1);
      \draw (0,0) to node[sloped] {\tikz{\draw[-](0,0)--(0.1,0);}} (1,0);
      \draw (0,-1) to node  {\tikz{\draw[-](0,0)--(0.1,0);}} (1,-1);
      \node at (1/2,-1/2) {$h_2$};
      \draw (0,0) to node [fill=white] {$g'$} (1,0);
      \draw (0,-1) to node [fill=white] {$g''$} (1,-1);
      \fill[black] (0,0) circle[radius=0.02];
      \fill[black] (1,0) circle[radius=0.02];
      \fill[black] (0,-1) circle[radius=0.02];
      \fill[black] (1,-1) circle[radius=0.02];
	\end{scope}
	 \end{scope}
  \end{tikzpicture}
\end{center}
\caption{Diagram of ${\rm id}_{h_2}\#_2 l_1$}
\label{d:hor2_of_l_1}
\end{figure}

For each $g, g', g'' \in G$, $h_1,h_2,h_1',h_2'\in H$, and $l_1, l_2\in L$, such that
\begin{equation*}
    \begin{aligned}
h_1(g) &= g',\ &h_1'(g) = g',\\
h_2(g') &= g'',\ &h_2'(g') = g'',\\
l_1(h_1)&= h_1', \text{ and } &l_2(h_2) = h_2' , 
    \end{aligned}
\end{equation*}
 ${\rm id}_{h_2}\#_2 l_1$ and $l_2 \#_2{\rm id}_{h_1}$ can be determind in similar manner of the case of $H$ and it can be write 
 in Figures~\ref{d:hor2_of_l_1} and~\ref{d:hor2_of_l_2}, respectively.
We can correspond ${\rm id}_{h_2}\#_2 l_1$ to $\act{h_2}l_1$, and $l_2 \#_2{\rm id}_{h_1}$ to $l_2$ in the context of a 3-crossed module.

\begin{figure}[h]
 \begin{center}
  \begin{tikzpicture}[scale=2]
    \begin{scope}[xshift=0.0cm]
      \draw (0,0) to node[sloped] {\tikz{\draw[->](0,0)--(0.1,0);}} (0,-1);
      \draw (1,0) to node[sloped] {\tikz{\draw[->](0,0)--(0.1,0);}} (1,-1);
      \draw (0,0) to node[sloped] {\tikz{\draw[-](0,0)--(0.1,0);}} (1,0);
      \draw (0,-1) to node  {\tikz{\draw[-](0,0)--(0.1,0);}} (1,-1);
      \node at (1/2,-1/2) {$h_1$};
      \draw (0,0) to node [fill=white] {$g$} (1,0);
      \draw (0,-1) to node[fill=white] {$g'$} (1,-1);
      \fill[black] (0,0) circle[radius=0.02];
      \fill[black] (1,0) circle[radius=0.02];
      \fill[black] (0,-1) circle[radius=0.02];
      \fill[black] (1,-1) circle[radius=0.02];
       \draw[->] (1.1,-1) to (1.9,-1);  
      \node at (1.5,-1) [above]{$(l_2 \#_2{\rm id}_{h_1})$};
       \begin{scope}[yshift=-1.0cm]
      \draw (0,0) to node[sloped] {\tikz{\draw[->](0,0)--(0.1,0);}} (0,-1);
      \draw (1,0) to node[sloped] {\tikz{\draw[->](0,0)--(0.1,0);}} (1,-1);
      \draw (0,0) to node[sloped] {\tikz{\draw[-](0,0)--(0.1,0);}} (1,0);
      \draw (0,-1) to node  {\tikz{\draw[-](0,0)--(0.1,0);}} (1,-1);
      \node at (1/2,-1/2) {$h_2$};
      \draw (0,0) to node [fill=white] {$g'$} (1,0);
      \draw (0,-1) to node [fill=white] {$g''$} (1,-1);
      \fill[black] (0,0) circle[radius=0.02];
      \fill[black] (1,0) circle[radius=0.02];
      \fill[black] (0,-1) circle[radius=0.02];
      \fill[black] (1,-1) circle[radius=0.02]; 
	\end{scope}
    \begin{scope}[xshift=2.0cm]
      \draw (0,0) to node[sloped] {\tikz{\draw[->](0,0)--(0.1,0);}} (0,-1);
      \draw (1,0) to node[sloped] {\tikz{\draw[->](0,0)--(0.1,0);}} (1,-1);
      \draw (0,0) to node[sloped] {\tikz{\draw[-](0,0)--(0.1,0);}} (1,0);
      \draw (0,-1) to node  {\tikz{\draw[-](0,0)--(0.1,0);}} (1,-1);
      \node at (1/2,-1/2) {$h_1$};
      \draw (0,0) to node [fill=white] {$g$} (1,0);
      \draw (0,-1) to node [fill=white] {$g'$}(1,-1);
      \fill[black] (0,0) circle[radius=0.02];
      \fill[black] (1,0) circle[radius=0.02];
      \fill[black] (0,-1) circle[radius=0.02];
      \fill[black] (1,-1) circle[radius=0.02];
	\end{scope}
	 \begin{scope}[xshift=2.0cm,yshift=-1.0cm]
      \draw (0,0) to node[sloped] {\tikz{\draw[->](0,0)--(0.1,0);}} (0,-1);
      \draw (1,0) to node[sloped] {\tikz{\draw[->](0,0)--(0.1,0);}} (1,-1);
      \draw (0,0) to node[sloped] {\tikz{\draw[-](0,0)--(0.1,0);}} (1,0);
      \draw (0,-1) to node  {\tikz{\draw[-](0,0)--(0.1,0);}} (1,-1);
      \node at (1/2,-1/2) {$h_2'$};
      \draw (0,0) to node [fill=white] {$g'$} (1,0);
      \draw (0,-1) to node [fill=white] {$g''$} (1,-1);
      \fill[black] (0,0) circle[radius=0.02];
      \fill[black] (1,0) circle[radius=0.02];
      \fill[black] (0,-1) circle[radius=0.02];
      \fill[black] (1,-1) circle[radius=0.02];
	\end{scope}
	 \end{scope}
  \end{tikzpicture}
\end{center}
\caption{Diagram  of $l_2 \#_2{\rm id}_{h_1}$}
\label{d:hor2_of_l_2}
\end{figure}
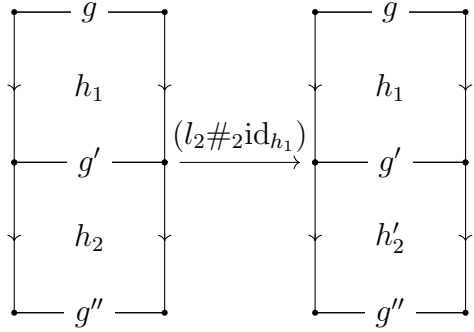

By using the Peiffer lifting, we obtain a map 
$
({\rm id}_{g'_2}\#_1 h_1) \#_2 (h_2\#_1 {\rm id}_{g_1})
$ to $
(h_2\#_1 {\rm id}_{g'_1}) \#_2$ $ ({\rm id}_{g_2}\#_1 h_1).
$
\begin{equation}
  \begin{split} 
\label{pfmap}
\{h_2, \act{g_2}h_1 \}(\act{\partial(h_2)g_2}h_1\cdot h_2) &= \partial(\{h_2, \act{g_2}h_1 \})\cdot \act{\partial(h_2)g_2}h_1\cdot h_2\\
&=h_2\cdot \act{g_2}h_1\cdot h_2^{-1}\cdot \act{\partial(h_2)g_2}h_1^{-1}\cdot  \act{\partial(h_2)g_2} h_1\cdot h_2\\
&= h_2\cdot \act{g_2}h_1
 \end{split}
\end{equation}

Equation~(\ref{pfmap}) shows that $\{h_2, \act{g_2}h_1 \}$ defines a morphism from $({\rm id}_{g'_2}\#_1 h_1) \#_2 (h_2\#_1 {\rm id}_{g_1})$ to $(h_2\#_1 {\rm id}_{g'_1}) \#_2 ({\rm id}_{g_2}\#_1 h_1)$. If we assume $g_1=g_2=e_g$, equation~(\ref{pfmap}) can be written as $\{h_2, h_1 \}(\act{\partial(h_2)}h_1\cdot h_2) = h_2\cdot h_1$. The map $\{h_2, \act{g_2}h_1 \}$ is illustrated in Figure~\ref{d:PF_lift}.
\begin{figure}
\begin{center}
  \begin{tikzpicture}[scale=2]
    \begin{scope}[xshift=0.0cm]
      \draw (0,0) to node[sloped] {\tikz{\draw[->](0,0)--(0.1,0);}} (0,-1);
      \draw (1,0) to node[sloped] {\tikz{\draw[->](0,0)--(0.1,0);}} (1,-1);
      \draw (0,0) to node[sloped] {\tikz{\draw[->](0,0)--(0.1,0);}} (1,0);
      \draw (0,-1) to node  {\tikz{\draw[-](0,0)--(0.1,0);}} (1,-1);
      \node at (1/2,-1/2) {$h_2$};
      \draw (0,0) to node [fill=white] {$g_2$} (1,0);
      \draw (0,-1) to node [fill=white] {$g_2'$} (1,-1);
      \fill[black] (0,0) circle[radius=0.02];
      \fill[black] (1,0) circle[radius=0.02];
      \fill[black] (0,-1) circle[radius=0.02];
      \fill[black] (1,-1) circle[radius=0.02];
    \end{scope}
    \begin{scope}[xshift=1.0cm]
      \draw (0,0) to node[sloped] {\tikz{\draw[->](0,0)--(0.1,0);}} (0,-1);
      \draw (1,0) to node[sloped] {\tikz{\draw[->](0,0)--(0.1,0);}} (1,-1);
      \draw (0,0) to node[sloped] {\tikz{\draw[->](0,0)--(0.1,0);}} (1,0);
      \draw (0,-1) to node  {\tikz{\draw[-](0,0)--(0.1,0);}} (1,-1);
      \node at (1/2,-1/2) {$$};
      \draw (0,0) to node [fill=white] {$g_1$} (1,0);
      \draw (0,-1) to node [fill=white] {$g_1$} (1,-1);
      \fill[black] (0,0) circle[radius=0.02];
      \fill[black] (1,0) circle[radius=0.02];
      \fill[black] (0,-1) circle[radius=0.02];
      \fill[black] (1,-1) circle[radius=0.02];
    \end{scope}
        \begin{scope}[xshift=0.0cm, yshift = -1.0cm]
      \draw (0,0) to node[sloped] {\tikz{\draw[->](0,0)--(0.1,0);}} (0,-1);
      \draw (1,0) to node[sloped] {\tikz{\draw[->](0,0)--(0.1,0);}} (1,-1);
      \draw (0,0) to node[sloped] {\tikz{\draw[->](0,0)--(0.1,0);}} (1,0);
      \draw (0,-1) to node  {\tikz{\draw[-](0,0)--(0.1,0);}} (1,-1);
      \node at (1/2,-1/2) {$$};
      \draw (0,0) to node [fill=white] {$g_2'$} (1,0);
      \draw (0,-1) to node [fill=white] {$g_2'$} (1,-1);
      \fill[black] (0,0) circle[radius=0.02];
      \fill[black] (1,0) circle[radius=0.02];
      \fill[black] (0,-1) circle[radius=0.02];
      \fill[black] (1,-1) circle[radius=0.02];
    \end{scope}
    \begin{scope}[xshift=1.0cm, yshift = -1.0cm]
      \draw (0,0) to node[sloped] {\tikz{\draw[->](0,0)--(0.1,0);}} (0,-1);
      \draw (1,0) to node[sloped] {\tikz{\draw[->](0,0)--(0.1,0);}} (1,-1);
      \draw (0,0) to node[sloped] {\tikz{\draw[->](0,0)--(0.1,0);}} (1,0);
      \draw (0,-1) to node  {\tikz{\draw[-](0,0)--(0.1,0);}} (1,-1);
      \node at (1/2,-1/2) {$h_1$};
      \draw (0,0) to node [fill=white] {$g_1$} (1,0);
      \draw (0,-1) to node [fill=white] {$g_1'$} (1,-1);
      \fill[black] (0,0) circle[radius=0.02];
      \fill[black] (1,0) circle[radius=0.02];
      \fill[black] (0,-1) circle[radius=0.02];
      \fill[black] (1,-1) circle[radius=0.02];
    \end{scope}
       \draw[->] (2.1,-1) to (3.4,-1);
      \node at (2.75,-1) [above]{$\{h_2, \act{g_2}h_1 \}$};
    \begin{scope}[xshift=3.5cm]
      \draw (0,0) to node[sloped] {\tikz{\draw[->](0,0)--(0.1,0);}} (0,-1);
      \draw (1,0) to node[sloped] {\tikz{\draw[->](0,0)--(0.1,0);}} (1,-1);
      \draw (0,0) to node[sloped] {\tikz{\draw[->](0,0)--(0.1,0);}} (1,0);
      \draw (0,-1) to node  {\tikz{\draw[-](0,0)--(0.1,0);}} (1,-1);
      \node at (1/2,-1/2) {$$};
      \draw (0,0) to node [fill=white] {$g_2$} (1,0);
      \draw (0,-1) to node [fill=white] {$g_2$} (1,-1);
      \fill[black] (0,0) circle[radius=0.02];
      \fill[black] (1,0) circle[radius=0.02];
      \fill[black] (0,-1) circle[radius=0.02];
      \fill[black] (1,-1) circle[radius=0.02];
    \end{scope}
    \begin{scope}[xshift=4.5cm]
      \draw (0,0) to node[sloped] {\tikz{\draw[->](0,0)--(0.1,0);}} (0,-1);
      \draw (1,0) to node[sloped] {\tikz{\draw[->](0,0)--(0.1,0);}} (1,-1);
      \draw (0,0) to node[sloped] {\tikz{\draw[->](0,0)--(0.1,0);}} (1,0);
      \draw (0,-1) to node  {\tikz{\draw[-](0,0)--(0.1,0);}} (1,-1);
      \node at (1/2,-1/2) {$h_1$};
      \draw (0,0) to node [fill=white] {$g_1$} (1,0);
      \draw (0,-1) to node [fill=white] {$g_1'$} (1,-1);
      \fill[black] (0,0) circle[radius=0.02];
      \fill[black] (1,0) circle[radius=0.02];
      \fill[black] (0,-1) circle[radius=0.02];
      \fill[black] (1,-1) circle[radius=0.02];
    \end{scope}
        \begin{scope}[xshift=3.5cm, yshift = -1.0cm]
      \draw (0,0) to node[sloped] {\tikz{\draw[->](0,0)--(0.1,0);}} (0,-1);
      \draw (1,0) to node[sloped] {\tikz{\draw[->](0,0)--(0.1,0);}} (1,-1);
      \draw (0,0) to node[sloped] {\tikz{\draw[->](0,0)--(0.1,0);}} (1,0);
      \draw (0,-1) to node  {\tikz{\draw[-](0,0)--(0.1,0);}} (1,-1);
      \node at (1/2,-1/2) {$h_2$};
      \draw (0,0) to node [fill=white] {$g_2$} (1,0);
      \draw (0,-1) to node [fill=white] {$g_2'$} (1,-1);
      \fill[black] (0,0) circle[radius=0.02];
      \fill[black] (1,0) circle[radius=0.02];
      \fill[black] (0,-1) circle[radius=0.02];
      \fill[black] (1,-1) circle[radius=0.02];
    \end{scope}
    \begin{scope}[xshift=4.5cm, yshift = -1.0cm]
      \draw (0,0) to node[sloped] {\tikz{\draw[->](0,0)--(0.1,0);}} (0,-1);
      \draw (1,0) to node[sloped] {\tikz{\draw[->](0,0)--(0.1,0);}} (1,-1);
      \draw (0,0) to node[sloped] {\tikz{\draw[->](0,0)--(0.1,0);}} (1,0);
      \draw (0,-1) to node  {\tikz{\draw[-](0,0)--(0.1,0);}} (1,-1);
      \node at (1/2,-1/2) {$$};
      \draw (0,0) to node [fill=white] {$g_1'$} (1,0);
      \draw (0,-1) to node [fill=white] {$g_1'$} (1,-1);
      \fill[black] (0,0) circle[radius=0.02];
      \fill[black] (1,0) circle[radius=0.02];
      \fill[black] (0,-1) circle[radius=0.02];
      \fill[black] (1,-1) circle[radius=0.02];
    \end{scope}
  \end{tikzpicture}
\end{center}
\caption{Diagram  of Peiffer lifting}
\label{d:PF_lift}
\end{figure}
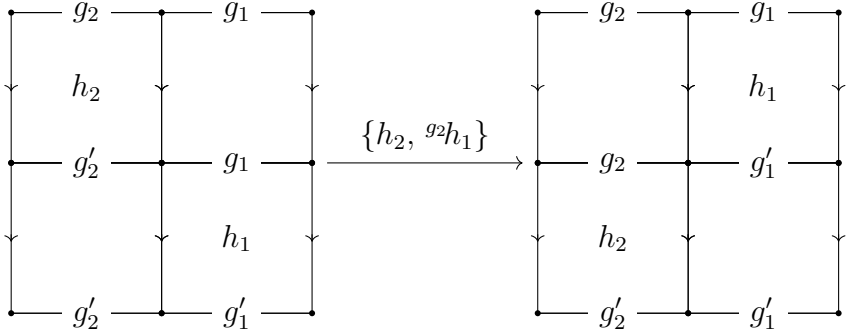

By referring to the diagram of the Peiffer lifting, we can understand  we can visually understand Properties~\ref{lpl} and ~\ref{rpl} of the 2-crossed module. Let $h_1, h_2, h_3 \in H$, and consider $h_3 \#_1 h_2 \# h_1 (e_g \#_1 e_g \#_1 e_g ) = \del(h_3) \#_1 \del(h_2) \#_1 \del(h_1)$.  Property~\ref{lpl} of the 2-crossed module can be represented in Figure~\ref{d:PF_lift_left}, and Property~\ref{rpl} can be represented in Figure~\ref{d:PF_lift_right}.
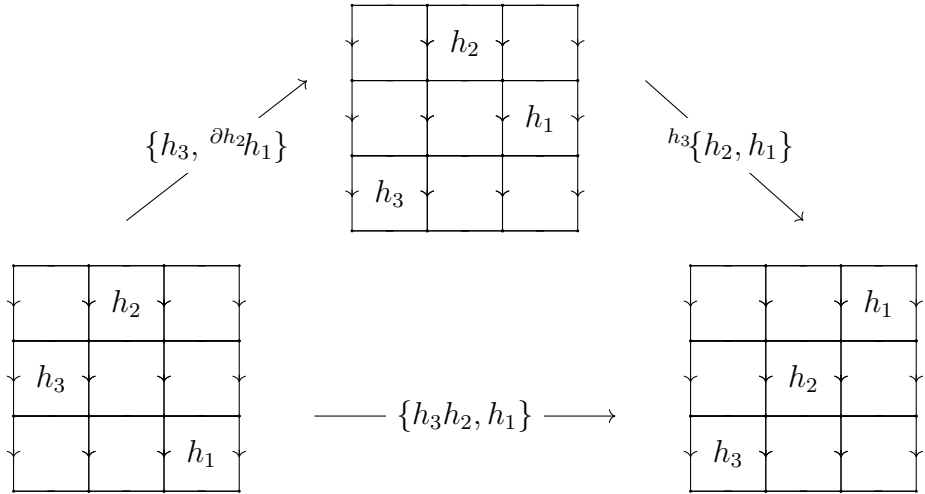
\begin{figure}
\begin{center}
  \begin{tikzpicture}[scale=1]
  	\begin{scope}[xshift=0.0cm]
	      \draw (0,0) to node[sloped] {\tikz{\draw[->](0,0)--(0.1,0);}} (0,-1);
	      \draw (1,0) to node[sloped] {\tikz{\draw[->](0,0)--(0.1,0);}} (1,-1);
	      \draw (0,0) to node[sloped] {\tikz{\draw[-](0,0)--(0.1,0);}} (1,0);
	      \draw (0,-1) to node  {\tikz{\draw[-](0,0)--(0.1,0);}} (1,-1);
	      \node at (1/2,-1/2) {$$};
	      \draw (0,0) to node {\tikz{\draw[-](0,0)--(0.1,0);}}(1,0);
	      \draw (0,-1) to node {\tikz{\draw[-](0,0)--(0.1,0);}} (1,-1);
	      \fill[black] (0,0) circle[radius=0.02];
	      \fill[black] (1,0) circle[radius=0.02];
	      \fill[black] (0,-1) circle[radius=0.02];
	      \fill[black] (1,-1) circle[radius=0.02];
	    \begin{scope}[xshift=1.0cm]
	      \draw (0,0) to node[sloped] {\tikz{\draw[->](0,0)--(0.1,0);}} (0,-1);
	      \draw (1,0) to node[sloped] {\tikz{\draw[->](0,0)--(0.1,0);}} (1,-1);
	      \draw (0,0) to node[sloped] {\tikz{\draw[-](0,0)--(0.1,0);}} (1,0);
	      \draw (0,-1) to node  {\tikz{\draw[-](0,0)--(0.1,0);}} (1,-1);
	      \node at (1/2,-1/2) {$h_2$};
	      \draw (0,0) to node {\tikz{\draw[-](0,0)--(0.1,0);}} (1,0);
	      \draw (0,-1) to node {\tikz{\draw[-](0,0)--(0.1,0);}} (1,-1);
	      \fill[black] (0,0) circle[radius=0.02];
	      \fill[black] (1,0) circle[radius=0.02];
	      \fill[black] (0,-1) circle[radius=0.02];
	      \fill[black] (1,-1) circle[radius=0.02];
	    \end{scope}
	    	 \begin{scope}[xshift=2.0cm]
	      \draw (0,0) to node[sloped] {\tikz{\draw[->](0,0)--(0.1,0);}} (0,-1);
	      \draw (1,0) to node[sloped] {\tikz{\draw[->](0,0)--(0.1,0);}} (1,-1);
	      \draw (0,0) to node[sloped] {\tikz{\draw[-](0,0)--(0.1,0);}} (1,0);
	      \draw (0,-1) to node  {\tikz{\draw[-](0,0)--(0.1,0);}} (1,-1);
	      \node at (1/2,-1/2) {$$};
	      \draw (0,0) to node {\tikz{\draw[-](0,0)--(0.1,0);}} (1,0);
	      \draw (0,-1) to node {\tikz{\draw[-](0,0)--(0.1,0);}} (1,-1);
	      \fill[black] (0,0) circle[radius=0.02];
	      \fill[black] (1,0) circle[radius=0.02];
	      \fill[black] (0,-1) circle[radius=0.02];
	      \fill[black] (1,-1) circle[radius=0.02];
	    \end{scope}
	        \begin{scope}[xshift=0.0cm, yshift = -1.0cm]
	      \draw (0,0) to node[sloped] {\tikz{\draw[->](0,0)--(0.1,0);}} (0,-1);
	      \draw (1,0) to node[sloped] {\tikz{\draw[->](0,0)--(0.1,0);}} (1,-1);
	      \draw (0,0) to node[sloped] {\tikz{\draw[-](0,0)--(0.1,0);}} (1,0);
	      \draw (0,-1) to node  {\tikz{\draw[-](0,0)--(0.1,0);}} (1,-1);
	      \node at (1/2,-1/2) {$h_3$};
	      \draw (0,0) to node {\tikz{\draw[-](0,0)--(0.1,0);}} (1,0);
	      \draw (0,-1) to node {\tikz{\draw[-](0,0)--(0.1,0);}} (1,-1);
	      \fill[black] (0,0) circle[radius=0.02];
	      \fill[black] (1,0) circle[radius=0.02];
	      \fill[black] (0,-1) circle[radius=0.02];
	      \fill[black] (1,-1) circle[radius=0.02];
	    \end{scope}
	    \begin{scope}[xshift=1.0cm, yshift = -1.0cm]
	      \draw (0,0) to node[sloped] {\tikz{\draw[->](0,0)--(0.1,0);}} (0,-1);
	      \draw (1,0) to node[sloped] {\tikz{\draw[->](0,0)--(0.1,0);}} (1,-1);
	      \draw (0,0) to node[sloped] {\tikz{\draw[-](0,0)--(0.1,0);}} (1,0);
	      \draw (0,-1) to node  {\tikz{\draw[-](0,0)--(0.1,0);}} (1,-1);
	      \node at (1/2,-1/2) {$$};
	      \draw (0,0) to node {\tikz{\draw[-](0,0)--(0.1,0);}} (1,0);
	      \draw (0,-1) to node {\tikz{\draw[-](0,0)--(0.1,0);}} (1,-1);
	      \fill[black] (0,0) circle[radius=0.02];
	      \fill[black] (1,0) circle[radius=0.02];
	      \fill[black] (0,-1) circle[radius=0.02];
	      \fill[black] (1,-1) circle[radius=0.02];
	    \end{scope}
	     \begin{scope}[xshift=2.0cm, yshift = -1.0cm]
	      \draw (0,0) to node[sloped] {\tikz{\draw[->](0,0)--(0.1,0);}} (0,-1);
	      \draw (1,0) to node[sloped] {\tikz{\draw[->](0,0)--(0.1,0);}} (1,-1);
	      \draw (0,0) to node[sloped] {\tikz{\draw[-](0,0)--(0.1,0);}} (1,0);
	      \draw (0,-1) to node  {\tikz{\draw[-](0,0)--(0.1,0);}} (1,-1);
	      \node at (1/2,-1/2) {$$};
	      \draw (0,0) to node {\tikz{\draw[-](0,0)--(0.1,0);}} (1,0);
	      \draw (0,-1) to node {\tikz{\draw[-](0,0)--(0.1,0);}} (1,-1);
	      \fill[black] (0,0) circle[radius=0.02];
	      \fill[black] (1,0) circle[radius=0.02];
	      \fill[black] (0,-1) circle[radius=0.02];
	      \fill[black] (1,-1) circle[radius=0.02];
	    \end{scope}
	     \begin{scope}[xshift=0.0cm, yshift = -2.0cm]
	      \draw (0,0) to node[sloped] {\tikz{\draw[->](0,0)--(0.1,0);}} (0,-1);
	      \draw (1,0) to node[sloped] {\tikz{\draw[->](0,0)--(0.1,0);}} (1,-1);
	      \draw (0,0) to node[sloped] {\tikz{\draw[-](0,0)--(0.1,0);}} (1,0);
	      \draw (0,-1) to node  {\tikz{\draw[-](0,0)--(0.1,0);}} (1,-1);
	      \node at (1/2,-1/2) {$$};
	      \draw (0,0) to node {\tikz{\draw[-](0,0)--(0.1,0);}} (1,0);
	      \draw (0,-1) to node {\tikz{\draw[-](0,0)--(0.1,0);}} (1,-1);
	      \fill[black] (0,0) circle[radius=0.02];
	      \fill[black] (1,0) circle[radius=0.02];
	      \fill[black] (0,-1) circle[radius=0.02];
	      \fill[black] (1,-1) circle[radius=0.02];
	    \end{scope}
	    \begin{scope}[xshift=1.0cm, yshift = -2.0cm]
	      \draw (0,0) to node[sloped] {\tikz{\draw[->](0,0)--(0.1,0);}} (0,-1);
	      \draw (1,0) to node[sloped] {\tikz{\draw[->](0,0)--(0.1,0);}} (1,-1);
	      \draw (0,0) to node[sloped] {\tikz{\draw[-](0,0)--(0.1,0);}} (1,0);
	      \draw (0,-1) to node  {\tikz{\draw[-](0,0)--(0.1,0);}} (1,-1);
	      \node at (1/2,-1/2) {$$};
	      \draw (0,0) to node {\tikz{\draw[-](0,0)--(0.1,0);}}(1,0);
	      \draw (0,-1) to node {\tikz{\draw[-](0,0)--(0.1,0);}} (1,-1);
	      \fill[black] (0,0) circle[radius=0.02];
	      \fill[black] (1,0) circle[radius=0.02];
	      \fill[black] (0,-1) circle[radius=0.02];
	      \fill[black] (1,-1) circle[radius=0.02];
	    \end{scope}
	     \begin{scope}[xshift=2.0cm, yshift = -2.0cm]
	      \draw (0,0) to node[sloped] {\tikz{\draw[->](0,0)--(0.1,0);}} (0,-1);
	      \draw (1,0) to node[sloped] {\tikz{\draw[->](0,0)--(0.1,0);}} (1,-1);
	      \draw (0,0) to node[sloped] {\tikz{\draw[-](0,0)--(0.1,0);}} (1,0);
	      \draw (0,-1) to node  {\tikz{\draw[-](0,0)--(0.1,0);}} (1,-1);
	      \node at (1/2,-1/2) {$h_1$};
	      \draw (0,0) to node {\tikz{\draw[-](0,0)--(0.1,0);}}(1,0);
	      \draw (0,-1) to node {\tikz{\draw[-](0,0)--(0.1,0);}} (1,-1);
	      \fill[black] (0,0) circle[radius=0.02];
	      \fill[black] (1,0) circle[radius=0.02];
	      \fill[black] (0,-1) circle[radius=0.02];
	      \fill[black] (1,-1) circle[radius=0.02];
	    \end{scope}
    \end{scope}
    \begin{scope}[xshift=4.5cm, yshift = 2*1.732cm]
	      \draw (0,0) to node[sloped] {\tikz{\draw[->](0,0)--(0.1,0);}} (0,-1);
	      \draw (1,0) to node[sloped] {\tikz{\draw[->](0,0)--(0.1,0);}} (1,-1);
	      \draw (0,0) to node[sloped] {\tikz{\draw[-](0,0)--(0.1,0);}} (1,0);
	      \draw (0,-1) to node  {\tikz{\draw[-](0,0)--(0.1,0);}} (1,-1);
	      \node at (1/2,-1/2) {$$};
	      \draw (0,0) to node {\tikz{\draw[-](0,0)--(0.1,0);}}(1,0);
	      \draw (0,-1) to node {\tikz{\draw[-](0,0)--(0.1,0);}} (1,-1);
	      \fill[black] (0,0) circle[radius=0.02];
	      \fill[black] (1,0) circle[radius=0.02];
	      \fill[black] (0,-1) circle[radius=0.02];
	      \fill[black] (1,-1) circle[radius=0.02];
	    \begin{scope}[xshift=1.0cm]
	      \draw (0,0) to node[sloped] {\tikz{\draw[->](0,0)--(0.1,0);}} (0,-1);
	      \draw (1,0) to node[sloped] {\tikz{\draw[->](0,0)--(0.1,0);}} (1,-1);
	      \draw (0,0) to node[sloped] {\tikz{\draw[-](0,0)--(0.1,0);}} (1,0);
	      \draw (0,-1) to node  {\tikz{\draw[-](0,0)--(0.1,0);}} (1,-1);
	      \node at (1/2,-1/2) {$h_2$};
	      \draw (0,0) to node {\tikz{\draw[-](0,0)--(0.1,0);}} (1,0);
	      \draw (0,-1) to node {\tikz{\draw[-](0,0)--(0.1,0);}} (1,-1);
	      \fill[black] (0,0) circle[radius=0.02];
	      \fill[black] (1,0) circle[radius=0.02];
	      \fill[black] (0,-1) circle[radius=0.02];
	      \fill[black] (1,-1) circle[radius=0.02];
	    \end{scope}
	    	 \begin{scope}[xshift=2.0cm]
	      \draw (0,0) to node[sloped] {\tikz{\draw[->](0,0)--(0.1,0);}} (0,-1);
	      \draw (1,0) to node[sloped] {\tikz{\draw[->](0,0)--(0.1,0);}} (1,-1);
	      \draw (0,0) to node[sloped] {\tikz{\draw[-](0,0)--(0.1,0);}} (1,0);
	      \draw (0,-1) to node  {\tikz{\draw[-](0,0)--(0.1,0);}} (1,-1);
	      \node at (1/2,-1/2) {$$};
	      \draw (0,0) to node {\tikz{\draw[-](0,0)--(0.1,0);}} (1,0);
	      \draw (0,-1) to node {\tikz{\draw[-](0,0)--(0.1,0);}} (1,-1);
	      \fill[black] (0,0) circle[radius=0.02];
	      \fill[black] (1,0) circle[radius=0.02];
	      \fill[black] (0,-1) circle[radius=0.02];
	      \fill[black] (1,-1) circle[radius=0.02];
	    \end{scope}
	        \begin{scope}[xshift=0.0cm, yshift = -1.0cm]
	      \draw (0,0) to node[sloped] {\tikz{\draw[->](0,0)--(0.1,0);}} (0,-1);
	      \draw (1,0) to node[sloped] {\tikz{\draw[->](0,0)--(0.1,0);}} (1,-1);
	      \draw (0,0) to node[sloped] {\tikz{\draw[-](0,0)--(0.1,0);}} (1,0);
	      \draw (0,-1) to node  {\tikz{\draw[-](0,0)--(0.1,0);}} (1,-1);
	      \node at (1/2,-1/2) {$$};
	      \draw (0,0) to node {\tikz{\draw[-](0,0)--(0.1,0);}} (1,0);
	      \draw (0,-1) to node {\tikz{\draw[-](0,0)--(0.1,0);}} (1,-1);
	      \fill[black] (0,0) circle[radius=0.02];
	      \fill[black] (1,0) circle[radius=0.02];
	      \fill[black] (0,-1) circle[radius=0.02];
	      \fill[black] (1,-1) circle[radius=0.02];
	    \end{scope}
	    \begin{scope}[xshift=1.0cm, yshift = -1.0cm]
	      \draw (0,0) to node[sloped] {\tikz{\draw[->](0,0)--(0.1,0);}} (0,-1);
	      \draw (1,0) to node[sloped] {\tikz{\draw[->](0,0)--(0.1,0);}} (1,-1);
	      \draw (0,0) to node[sloped] {\tikz{\draw[-](0,0)--(0.1,0);}} (1,0);
	      \draw (0,-1) to node  {\tikz{\draw[-](0,0)--(0.1,0);}} (1,-1);
	      \node at (1/2,-1/2) {$$};
	      \draw (0,0) to node {\tikz{\draw[-](0,0)--(0.1,0);}} (1,0);
	      \draw (0,-1) to node {\tikz{\draw[-](0,0)--(0.1,0);}} (1,-1);
	      \fill[black] (0,0) circle[radius=0.02];
	      \fill[black] (1,0) circle[radius=0.02];
	      \fill[black] (0,-1) circle[radius=0.02];
	      \fill[black] (1,-1) circle[radius=0.02];
	    \end{scope}
	     \begin{scope}[xshift=2.0cm, yshift = -1.0cm]
	      \draw (0,0) to node[sloped] {\tikz{\draw[->](0,0)--(0.1,0);}} (0,-1);
	      \draw (1,0) to node[sloped] {\tikz{\draw[->](0,0)--(0.1,0);}} (1,-1);
	      \draw (0,0) to node[sloped] {\tikz{\draw[-](0,0)--(0.1,0);}} (1,0);
	      \draw (0,-1) to node  {\tikz{\draw[-](0,0)--(0.1,0);}} (1,-1);
	      \node at (1/2,-1/2) {$h_1$};
	      \draw (0,0) to node {\tikz{\draw[-](0,0)--(0.1,0);}} (1,0);
	      \draw (0,-1) to node {\tikz{\draw[-](0,0)--(0.1,0);}} (1,-1);
	      \fill[black] (0,0) circle[radius=0.02];
	      \fill[black] (1,0) circle[radius=0.02];
	      \fill[black] (0,-1) circle[radius=0.02];
	      \fill[black] (1,-1) circle[radius=0.02];
	    \end{scope}
	     \begin{scope}[xshift=0.0cm, yshift = -2.0cm]
	      \draw (0,0) to node[sloped] {\tikz{\draw[->](0,0)--(0.1,0);}} (0,-1);
	      \draw (1,0) to node[sloped] {\tikz{\draw[->](0,0)--(0.1,0);}} (1,-1);
	      \draw (0,0) to node[sloped] {\tikz{\draw[-](0,0)--(0.1,0);}} (1,0);
	      \draw (0,-1) to node  {\tikz{\draw[-](0,0)--(0.1,0);}} (1,-1);
	      \node at (1/2,-1/2) {$h_3$};
	      \draw (0,0) to node {\tikz{\draw[-](0,0)--(0.1,0);}} (1,0);
	      \draw (0,-1) to node {\tikz{\draw[-](0,0)--(0.1,0);}} (1,-1);
	      \fill[black] (0,0) circle[radius=0.02];
	      \fill[black] (1,0) circle[radius=0.02];
	      \fill[black] (0,-1) circle[radius=0.02];
	      \fill[black] (1,-1) circle[radius=0.02];
	    \end{scope}
	    \begin{scope}[xshift=1.0cm, yshift = -2.0cm]
	      \draw (0,0) to node[sloped] {\tikz{\draw[->](0,0)--(0.1,0);}} (0,-1);
	      \draw (1,0) to node[sloped] {\tikz{\draw[->](0,0)--(0.1,0);}} (1,-1);
	      \draw (0,0) to node[sloped] {\tikz{\draw[-](0,0)--(0.1,0);}} (1,0);
	      \draw (0,-1) to node  {\tikz{\draw[-](0,0)--(0.1,0);}} (1,-1);
	      \node at (1/2,-1/2) {$$};
	      \draw (0,0) to node {\tikz{\draw[-](0,0)--(0.1,0);}}(1,0);
	      \draw (0,-1) to node {\tikz{\draw[-](0,0)--(0.1,0);}} (1,-1);
	      \fill[black] (0,0) circle[radius=0.02];
	      \fill[black] (1,0) circle[radius=0.02];
	      \fill[black] (0,-1) circle[radius=0.02];
	      \fill[black] (1,-1) circle[radius=0.02];
	    \end{scope}
	     \begin{scope}[xshift=2.0cm, yshift = -2.0cm]
	      \draw (0,0) to node[sloped] {\tikz{\draw[->](0,0)--(0.1,0);}} (0,-1);
	      \draw (1,0) to node[sloped] {\tikz{\draw[->](0,0)--(0.1,0);}} (1,-1);
	      \draw (0,0) to node[sloped] {\tikz{\draw[-](0,0)--(0.1,0);}} (1,0);
	      \draw (0,-1) to node  {\tikz{\draw[-](0,0)--(0.1,0);}} (1,-1);
	      \node at (1/2,-1/2) {$$};
	      \draw (0,0) to node {\tikz{\draw[-](0,0)--(0.1,0);}}(1,0);
	      \draw (0,-1) to node {\tikz{\draw[-](0,0)--(0.1,0);}} (1,-1);
	      \fill[black] (0,0) circle[radius=0.02];
	      \fill[black] (1,0) circle[radius=0.02];
	      \fill[black] (0,-1) circle[radius=0.02];
	      \fill[black] (1,-1) circle[radius=0.02];
	    \end{scope}
    \end{scope}
    \begin{scope}[xshift=9.0cm]
	      \draw (0,0) to node[sloped] {\tikz{\draw[->](0,0)--(0.1,0);}} (0,-1);
	      \draw (1,0) to node[sloped] {\tikz{\draw[->](0,0)--(0.1,0);}} (1,-1);
	      \draw (0,0) to node[sloped] {\tikz{\draw[-](0,0)--(0.1,0);}} (1,0);
	      \draw (0,-1) to node  {\tikz{\draw[-](0,0)--(0.1,0);}} (1,-1);
	      \node at (1/2,-1/2) {$$};
	      \draw (0,0) to node {\tikz{\draw[-](0,0)--(0.1,0);}}(1,0);
	      \draw (0,-1) to node {\tikz{\draw[-](0,0)--(0.1,0);}} (1,-1);
	      \fill[black] (0,0) circle[radius=0.02];
	      \fill[black] (1,0) circle[radius=0.02];
	      \fill[black] (0,-1) circle[radius=0.02];
	      \fill[black] (1,-1) circle[radius=0.02];
	    \begin{scope}[xshift=1.0cm]
	      \draw (0,0) to node[sloped] {\tikz{\draw[->](0,0)--(0.1,0);}} (0,-1);
	      \draw (1,0) to node[sloped] {\tikz{\draw[->](0,0)--(0.1,0);}} (1,-1);
	      \draw (0,0) to node[sloped] {\tikz{\draw[-](0,0)--(0.1,0);}} (1,0);
	      \draw (0,-1) to node  {\tikz{\draw[-](0,0)--(0.1,0);}} (1,-1);
	      \node at (1/2,-1/2) {$$};
	      \draw (0,0) to node {\tikz{\draw[-](0,0)--(0.1,0);}} (1,0);
	      \draw (0,-1) to node {\tikz{\draw[-](0,0)--(0.1,0);}} (1,-1);
	      \fill[black] (0,0) circle[radius=0.02];
	      \fill[black] (1,0) circle[radius=0.02];
	      \fill[black] (0,-1) circle[radius=0.02];
	      \fill[black] (1,-1) circle[radius=0.02];
	    \end{scope}
	    	 \begin{scope}[xshift=2.0cm]
	      \draw (0,0) to node[sloped] {\tikz{\draw[->](0,0)--(0.1,0);}} (0,-1);
	      \draw (1,0) to node[sloped] {\tikz{\draw[->](0,0)--(0.1,0);}} (1,-1);
	      \draw (0,0) to node[sloped] {\tikz{\draw[-](0,0)--(0.1,0);}} (1,0);
	      \draw (0,-1) to node  {\tikz{\draw[-](0,0)--(0.1,0);}} (1,-1);
	      \node at (1/2,-1/2) {$h_1$};
	      \draw (0,0) to node {\tikz{\draw[-](0,0)--(0.1,0);}} (1,0);
	      \draw (0,-1) to node {\tikz{\draw[-](0,0)--(0.1,0);}} (1,-1);
	      \fill[black] (0,0) circle[radius=0.02];
	      \fill[black] (1,0) circle[radius=0.02];
	      \fill[black] (0,-1) circle[radius=0.02];
	      \fill[black] (1,-1) circle[radius=0.02];
	    \end{scope}
	        \begin{scope}[xshift=0.0cm, yshift = -1.0cm]
	      \draw (0,0) to node[sloped] {\tikz{\draw[->](0,0)--(0.1,0);}} (0,-1);
	      \draw (1,0) to node[sloped] {\tikz{\draw[->](0,0)--(0.1,0);}} (1,-1);
	      \draw (0,0) to node[sloped] {\tikz{\draw[-](0,0)--(0.1,0);}} (1,0);
	      \draw (0,-1) to node  {\tikz{\draw[-](0,0)--(0.1,0);}} (1,-1);
	      \node at (1/2,-1/2) {$$};
	      \draw (0,0) to node {\tikz{\draw[-](0,0)--(0.1,0);}} (1,0);
	      \draw (0,-1) to node {\tikz{\draw[-](0,0)--(0.1,0);}} (1,-1);
	      \fill[black] (0,0) circle[radius=0.02];
	      \fill[black] (1,0) circle[radius=0.02];
	      \fill[black] (0,-1) circle[radius=0.02];
	      \fill[black] (1,-1) circle[radius=0.02];
	    \end{scope}
	    \begin{scope}[xshift=1.0cm, yshift = -1.0cm]
	      \draw (0,0) to node[sloped] {\tikz{\draw[->](0,0)--(0.1,0);}} (0,-1);
	      \draw (1,0) to node[sloped] {\tikz{\draw[->](0,0)--(0.1,0);}} (1,-1);
	      \draw (0,0) to node[sloped] {\tikz{\draw[-](0,0)--(0.1,0);}} (1,0);
	      \draw (0,-1) to node  {\tikz{\draw[-](0,0)--(0.1,0);}} (1,-1);
	      \node at (1/2,-1/2) {$h_2$};
	      \draw (0,0) to node {\tikz{\draw[-](0,0)--(0.1,0);}} (1,0);
	      \draw (0,-1) to node {\tikz{\draw[-](0,0)--(0.1,0);}} (1,-1);
	      \fill[black] (0,0) circle[radius=0.02];
	      \fill[black] (1,0) circle[radius=0.02];
	      \fill[black] (0,-1) circle[radius=0.02];
	      \fill[black] (1,-1) circle[radius=0.02];
	    \end{scope}
	     \begin{scope}[xshift=2.0cm, yshift = -1.0cm]
	      \draw (0,0) to node[sloped] {\tikz{\draw[->](0,0)--(0.1,0);}} (0,-1);
	      \draw (1,0) to node[sloped] {\tikz{\draw[->](0,0)--(0.1,0);}} (1,-1);
	      \draw (0,0) to node[sloped] {\tikz{\draw[-](0,0)--(0.1,0);}} (1,0);
	      \draw (0,-1) to node  {\tikz{\draw[-](0,0)--(0.1,0);}} (1,-1);
	      \node at (1/2,-1/2) {$$};
	      \draw (0,0) to node {\tikz{\draw[-](0,0)--(0.1,0);}} (1,0);
	      \draw (0,-1) to node {\tikz{\draw[-](0,0)--(0.1,0);}} (1,-1);
	      \fill[black] (0,0) circle[radius=0.02];
	      \fill[black] (1,0) circle[radius=0.02];
	      \fill[black] (0,-1) circle[radius=0.02];
	      \fill[black] (1,-1) circle[radius=0.02];
	    \end{scope}
	     \begin{scope}[xshift=0.0cm, yshift = -2.0cm]
	      \draw (0,0) to node[sloped] {\tikz{\draw[->](0,0)--(0.1,0);}} (0,-1);
	      \draw (1,0) to node[sloped] {\tikz{\draw[->](0,0)--(0.1,0);}} (1,-1);
	      \draw (0,0) to node[sloped] {\tikz{\draw[-](0,0)--(0.1,0);}} (1,0);
	      \draw (0,-1) to node  {\tikz{\draw[-](0,0)--(0.1,0);}} (1,-1);
	      \node at (1/2,-1/2) {$h_3$};
	      \draw (0,0) to node {\tikz{\draw[-](0,0)--(0.1,0);}} (1,0);
	      \draw (0,-1) to node {\tikz{\draw[-](0,0)--(0.1,0);}} (1,-1);
	      \fill[black] (0,0) circle[radius=0.02];
	      \fill[black] (1,0) circle[radius=0.02];
	      \fill[black] (0,-1) circle[radius=0.02];
	      \fill[black] (1,-1) circle[radius=0.02];
	    \end{scope}
	    \begin{scope}[xshift=1.0cm, yshift = -2.0cm]
	      \draw (0,0) to node[sloped] {\tikz{\draw[->](0,0)--(0.1,0);}} (0,-1);
	      \draw (1,0) to node[sloped] {\tikz{\draw[->](0,0)--(0.1,0);}} (1,-1);
	      \draw (0,0) to node[sloped] {\tikz{\draw[-](0,0)--(0.1,0);}} (1,0);
	      \draw (0,-1) to node  {\tikz{\draw[-](0,0)--(0.1,0);}} (1,-1);
	      \node at (1/2,-1/2) {$$};
	      \draw (0,0) to node {\tikz{\draw[-](0,0)--(0.1,0);}}(1,0);
	      \draw (0,-1) to node {\tikz{\draw[-](0,0)--(0.1,0);}} (1,-1);
	      \fill[black] (0,0) circle[radius=0.02];
	      \fill[black] (1,0) circle[radius=0.02];
	      \fill[black] (0,-1) circle[radius=0.02];
	      \fill[black] (1,-1) circle[radius=0.02];
	    \end{scope}
	     \begin{scope}[xshift=2.0cm, yshift = -2.0cm]
	      \draw (0,0) to node[sloped] {\tikz{\draw[->](0,0)--(0.1,0);}} (0,-1);
	      \draw (1,0) to node[sloped] {\tikz{\draw[->](0,0)--(0.1,0);}} (1,-1);
	      \draw (0,0) to node[sloped] {\tikz{\draw[-](0,0)--(0.1,0);}} (1,0);
	      \draw (0,-1) to node  {\tikz{\draw[-](0,0)--(0.1,0);}} (1,-1);
	      \node at (1/2,-1/2) {$$};
	      \draw (0,0) to node {\tikz{\draw[-](0,0)--(0.1,0);}}(1,0);
	      \draw (0,-1) to node {\tikz{\draw[-](0,0)--(0.1,0);}} (1,-1);
	      \fill[black] (0,0) circle[radius=0.02];
	      \fill[black] (1,0) circle[radius=0.02];
	      \fill[black] (0,-1) circle[radius=0.02];
	      \fill[black] (1,-1) circle[radius=0.02];
	    \end{scope}
    \end{scope}
     \draw[->] (4,-2) to (8,-2);
      \node at (6,-2) [fill=white]{$\{h_3 h_2, h_1\}$};
       \draw[->] (1.5 , 0.1+1/2) to (4.4-1/2,-1+ 2*1.732);
      \node at (2.7,1.6) [fill=white]{$\{h_3, \act{\del h_2}h_1 \}$};
       \draw[->](3+4.4+1,-1+ 2*1.732) to (10.5, 0.6);
      \node at (9.5,1.6)  [fill=white]{$\act{h_3}\{h_2, h_1\}$};
  \end{tikzpicture}
\end{center}
\caption{Diagram  of Property \ref{lpl} of 2-crossed module}
\label{d:PF_lift_left}
\end{figure}

\begin{figure}
\begin{center}
  \begin{tikzpicture}[scale=1]
  	\begin{scope}[xshift=0.0cm]
	      \draw (0,0) to node[sloped] {\tikz{\draw[->](0,0)--(0.1,0);}} (0,-1);
	      \draw (1,0) to node[sloped] {\tikz{\draw[->](0,0)--(0.1,0);}} (1,-1);
	      \draw (0,0) to node[sloped] {\tikz{\draw[-](0,0)--(0.1,0);}} (1,0);
	      \draw (0,-1) to node  {\tikz{\draw[-](0,0)--(0.1,0);}} (1,-1);
	      \node at (1/2,-1/2) {$h_3$};
	      \draw (0,0) to node {\tikz{\draw[-](0,0)--(0.1,0);}}(1,0);
	      \draw (0,-1) to node {\tikz{\draw[-](0,0)--(0.1,0);}} (1,-1);
	      \fill[black] (0,0) circle[radius=0.02];
	      \fill[black] (1,0) circle[radius=0.02];
	      \fill[black] (0,-1) circle[radius=0.02];
	      \fill[black] (1,-1) circle[radius=0.02];
	    \begin{scope}[xshift=1.0cm]
	      \draw (0,0) to node[sloped] {\tikz{\draw[->](0,0)--(0.1,0);}} (0,-1);
	      \draw (1,0) to node[sloped] {\tikz{\draw[->](0,0)--(0.1,0);}} (1,-1);
	      \draw (0,0) to node[sloped] {\tikz{\draw[-](0,0)--(0.1,0);}} (1,0);
	      \draw (0,-1) to node  {\tikz{\draw[-](0,0)--(0.1,0);}} (1,-1);
	      \node at (1/2,-1/2) {$$};
	      \draw (0,0) to node {\tikz{\draw[-](0,0)--(0.1,0);}} (1,0);
	      \draw (0,-1) to node {\tikz{\draw[-](0,0)--(0.1,0);}} (1,-1);
	      \fill[black] (0,0) circle[radius=0.02];
	      \fill[black] (1,0) circle[radius=0.02];
	      \fill[black] (0,-1) circle[radius=0.02];
	      \fill[black] (1,-1) circle[radius=0.02];
	    \end{scope}
	    	 \begin{scope}[xshift=2.0cm]
	      \draw (0,0) to node[sloped] {\tikz{\draw[->](0,0)--(0.1,0);}} (0,-1);
	      \draw (1,0) to node[sloped] {\tikz{\draw[->](0,0)--(0.1,0);}} (1,-1);
	      \draw (0,0) to node[sloped] {\tikz{\draw[-](0,0)--(0.1,0);}} (1,0);
	      \draw (0,-1) to node  {\tikz{\draw[-](0,0)--(0.1,0);}} (1,-1);
	      \node at (1/2,-1/2) {$$};
	      \draw (0,0) to node {\tikz{\draw[-](0,0)--(0.1,0);}} (1,0);
	      \draw (0,-1) to node {\tikz{\draw[-](0,0)--(0.1,0);}} (1,-1);
	      \fill[black] (0,0) circle[radius=0.02];
	      \fill[black] (1,0) circle[radius=0.02];
	      \fill[black] (0,-1) circle[radius=0.02];
	      \fill[black] (1,-1) circle[radius=0.02];
	    \end{scope}
	        \begin{scope}[xshift=0.0cm, yshift = -1.0cm]
	      \draw (0,0) to node[sloped] {\tikz{\draw[->](0,0)--(0.1,0);}} (0,-1);
	      \draw (1,0) to node[sloped] {\tikz{\draw[->](0,0)--(0.1,0);}} (1,-1);
	      \draw (0,0) to node[sloped] {\tikz{\draw[-](0,0)--(0.1,0);}} (1,0);
	      \draw (0,-1) to node  {\tikz{\draw[-](0,0)--(0.1,0);}} (1,-1);
	      \node at (1/2,-1/2) {$$};
	      \draw (0,0) to node {\tikz{\draw[-](0,0)--(0.1,0);}} (1,0);
	      \draw (0,-1) to node {\tikz{\draw[-](0,0)--(0.1,0);}} (1,-1);
	      \fill[black] (0,0) circle[radius=0.02];
	      \fill[black] (1,0) circle[radius=0.02];
	      \fill[black] (0,-1) circle[radius=0.02];
	      \fill[black] (1,-1) circle[radius=0.02];
	    \end{scope}
	    \begin{scope}[xshift=1.0cm, yshift = -1.0cm]
	      \draw (0,0) to node[sloped] {\tikz{\draw[->](0,0)--(0.1,0);}} (0,-1);
	      \draw (1,0) to node[sloped] {\tikz{\draw[->](0,0)--(0.1,0);}} (1,-1);
	      \draw (0,0) to node[sloped] {\tikz{\draw[-](0,0)--(0.1,0);}} (1,0);
	      \draw (0,-1) to node  {\tikz{\draw[-](0,0)--(0.1,0);}} (1,-1);
	      \node at (1/2,-1/2) {$$};
	      \draw (0,0) to node {\tikz{\draw[-](0,0)--(0.1,0);}} (1,0);
	      \draw (0,-1) to node {\tikz{\draw[-](0,0)--(0.1,0);}} (1,-1);
	      \fill[black] (0,0) circle[radius=0.02];
	      \fill[black] (1,0) circle[radius=0.02];
	      \fill[black] (0,-1) circle[radius=0.02];
	      \fill[black] (1,-1) circle[radius=0.02];
	    \end{scope}
	     \begin{scope}[xshift=2.0cm, yshift = -1.0cm]
	      \draw (0,0) to node[sloped] {\tikz{\draw[->](0,0)--(0.1,0);}} (0,-1);
	      \draw (1,0) to node[sloped] {\tikz{\draw[->](0,0)--(0.1,0);}} (1,-1);
	      \draw (0,0) to node[sloped] {\tikz{\draw[-](0,0)--(0.1,0);}} (1,0);
	      \draw (0,-1) to node  {\tikz{\draw[-](0,0)--(0.1,0);}} (1,-1);
	      \node at (1/2,-1/2) {$h_1$};
	      \draw (0,0) to node {\tikz{\draw[-](0,0)--(0.1,0);}} (1,0);
	      \draw (0,-1) to node {\tikz{\draw[-](0,0)--(0.1,0);}} (1,-1);
	      \fill[black] (0,0) circle[radius=0.02];
	      \fill[black] (1,0) circle[radius=0.02];
	      \fill[black] (0,-1) circle[radius=0.02];
	      \fill[black] (1,-1) circle[radius=0.02];
	    \end{scope}
	     \begin{scope}[xshift=0.0cm, yshift = -2.0cm]
	      \draw (0,0) to node[sloped] {\tikz{\draw[->](0,0)--(0.1,0);}} (0,-1);
	      \draw (1,0) to node[sloped] {\tikz{\draw[->](0,0)--(0.1,0);}} (1,-1);
	      \draw (0,0) to node[sloped] {\tikz{\draw[-](0,0)--(0.1,0);}} (1,0);
	      \draw (0,-1) to node  {\tikz{\draw[-](0,0)--(0.1,0);}} (1,-1);
	      \node at (1/2,-1/2) {$$};
	      \draw (0,0) to node {\tikz{\draw[-](0,0)--(0.1,0);}} (1,0);
	      \draw (0,-1) to node {\tikz{\draw[-](0,0)--(0.1,0);}} (1,-1);
	      \fill[black] (0,0) circle[radius=0.02];
	      \fill[black] (1,0) circle[radius=0.02];
	      \fill[black] (0,-1) circle[radius=0.02];
	      \fill[black] (1,-1) circle[radius=0.02];
	    \end{scope}
	    \begin{scope}[xshift=1.0cm, yshift = -2.0cm]
	      \draw (0,0) to node[sloped] {\tikz{\draw[->](0,0)--(0.1,0);}} (0,-1);
	      \draw (1,0) to node[sloped] {\tikz{\draw[->](0,0)--(0.1,0);}} (1,-1);
	      \draw (0,0) to node[sloped] {\tikz{\draw[-](0,0)--(0.1,0);}} (1,0);
	      \draw (0,-1) to node  {\tikz{\draw[-](0,0)--(0.1,0);}} (1,-1);
	      \node at (1/2,-1/2) {$h_2$};
	      \draw (0,0) to node {\tikz{\draw[-](0,0)--(0.1,0);}}(1,0);
	      \draw (0,-1) to node {\tikz{\draw[-](0,0)--(0.1,0);}} (1,-1);
	      \fill[black] (0,0) circle[radius=0.02];
	      \fill[black] (1,0) circle[radius=0.02];
	      \fill[black] (0,-1) circle[radius=0.02];
	      \fill[black] (1,-1) circle[radius=0.02];
	    \end{scope}
	     \begin{scope}[xshift=2.0cm, yshift = -2.0cm]
	      \draw (0,0) to node[sloped] {\tikz{\draw[->](0,0)--(0.1,0);}} (0,-1);
	      \draw (1,0) to node[sloped] {\tikz{\draw[->](0,0)--(0.1,0);}} (1,-1);
	      \draw (0,0) to node[sloped] {\tikz{\draw[-](0,0)--(0.1,0);}} (1,0);
	      \draw (0,-1) to node  {\tikz{\draw[-](0,0)--(0.1,0);}} (1,-1);
	      \node at (1/2,-1/2) {$$};
	      \draw (0,0) to node {\tikz{\draw[-](0,0)--(0.1,0);}}(1,0);
	      \draw (0,-1) to node {\tikz{\draw[-](0,0)--(0.1,0);}} (1,-1);
	      \fill[black] (0,0) circle[radius=0.02];
	      \fill[black] (1,0) circle[radius=0.02];
	      \fill[black] (0,-1) circle[radius=0.02];
	      \fill[black] (1,-1) circle[radius=0.02];
	    \end{scope}
    \end{scope}
    \begin{scope}[xshift=4.5cm, yshift = 2*1.732cm]
	      \draw (0,0) to node[sloped] {\tikz{\draw[->](0,0)--(0.1,0);}} (0,-1);
	      \draw (1,0) to node[sloped] {\tikz{\draw[->](0,0)--(0.1,0);}} (1,-1);
	      \draw (0,0) to node[sloped] {\tikz{\draw[-](0,0)--(0.1,0);}} (1,0);
	      \draw (0,-1) to node  {\tikz{\draw[-](0,0)--(0.1,0);}} (1,-1);
	      \node at (1/2,-1/2) {$$};
	      \draw (0,0) to node {\tikz{\draw[-](0,0)--(0.1,0);}}(1,0);
	      \draw (0,-1) to node {\tikz{\draw[-](0,0)--(0.1,0);}} (1,-1);
	      \fill[black] (0,0) circle[radius=0.02];
	      \fill[black] (1,0) circle[radius=0.02];
	      \fill[black] (0,-1) circle[radius=0.02];
	      \fill[black] (1,-1) circle[radius=0.02];
	    \begin{scope}[xshift=1.0cm]
	      \draw (0,0) to node[sloped] {\tikz{\draw[->](0,0)--(0.1,0);}} (0,-1);
	      \draw (1,0) to node[sloped] {\tikz{\draw[->](0,0)--(0.1,0);}} (1,-1);
	      \draw (0,0) to node[sloped] {\tikz{\draw[-](0,0)--(0.1,0);}} (1,0);
	      \draw (0,-1) to node  {\tikz{\draw[-](0,0)--(0.1,0);}} (1,-1);
	      \node at (1/2,-1/2) {$$};
	      \draw (0,0) to node {\tikz{\draw[-](0,0)--(0.1,0);}} (1,0);
	      \draw (0,-1) to node {\tikz{\draw[-](0,0)--(0.1,0);}} (1,-1);
	      \fill[black] (0,0) circle[radius=0.02];
	      \fill[black] (1,0) circle[radius=0.02];
	      \fill[black] (0,-1) circle[radius=0.02];
	      \fill[black] (1,-1) circle[radius=0.02];
	    \end{scope}
	    	 \begin{scope}[xshift=2.0cm]
	      \draw (0,0) to node[sloped] {\tikz{\draw[->](0,0)--(0.1,0);}} (0,-1);
	      \draw (1,0) to node[sloped] {\tikz{\draw[->](0,0)--(0.1,0);}} (1,-1);
	      \draw (0,0) to node[sloped] {\tikz{\draw[-](0,0)--(0.1,0);}} (1,0);
	      \draw (0,-1) to node  {\tikz{\draw[-](0,0)--(0.1,0);}} (1,-1);
	      \node at (1/2,-1/2) {$h_1$};
	      \draw (0,0) to node {\tikz{\draw[-](0,0)--(0.1,0);}} (1,0);
	      \draw (0,-1) to node {\tikz{\draw[-](0,0)--(0.1,0);}} (1,-1);
	      \fill[black] (0,0) circle[radius=0.02];
	      \fill[black] (1,0) circle[radius=0.02];
	      \fill[black] (0,-1) circle[radius=0.02];
	      \fill[black] (1,-1) circle[radius=0.02];
	    \end{scope}
	        \begin{scope}[xshift=0.0cm, yshift = -1.0cm]
	      \draw (0,0) to node[sloped] {\tikz{\draw[->](0,0)--(0.1,0);}} (0,-1);
	      \draw (1,0) to node[sloped] {\tikz{\draw[->](0,0)--(0.1,0);}} (1,-1);
	      \draw (0,0) to node[sloped] {\tikz{\draw[-](0,0)--(0.1,0);}} (1,0);
	      \draw (0,-1) to node  {\tikz{\draw[-](0,0)--(0.1,0);}} (1,-1);
	      \node at (1/2,-1/2) {$h_3$};
	      \draw (0,0) to node {\tikz{\draw[-](0,0)--(0.1,0);}} (1,0);
	      \draw (0,-1) to node {\tikz{\draw[-](0,0)--(0.1,0);}} (1,-1);
	      \fill[black] (0,0) circle[radius=0.02];
	      \fill[black] (1,0) circle[radius=0.02];
	      \fill[black] (0,-1) circle[radius=0.02];
	      \fill[black] (1,-1) circle[radius=0.02];
	    \end{scope}
	    \begin{scope}[xshift=1.0cm, yshift = -1.0cm]
	      \draw (0,0) to node[sloped] {\tikz{\draw[->](0,0)--(0.1,0);}} (0,-1);
	      \draw (1,0) to node[sloped] {\tikz{\draw[->](0,0)--(0.1,0);}} (1,-1);
	      \draw (0,0) to node[sloped] {\tikz{\draw[-](0,0)--(0.1,0);}} (1,0);
	      \draw (0,-1) to node  {\tikz{\draw[-](0,0)--(0.1,0);}} (1,-1);
	      \node at (1/2,-1/2) {$$};
	      \draw (0,0) to node {\tikz{\draw[-](0,0)--(0.1,0);}} (1,0);
	      \draw (0,-1) to node {\tikz{\draw[-](0,0)--(0.1,0);}} (1,-1);
	      \fill[black] (0,0) circle[radius=0.02];
	      \fill[black] (1,0) circle[radius=0.02];
	      \fill[black] (0,-1) circle[radius=0.02];
	      \fill[black] (1,-1) circle[radius=0.02];
	    \end{scope}
	     \begin{scope}[xshift=2.0cm, yshift = -1.0cm]
	      \draw (0,0) to node[sloped] {\tikz{\draw[->](0,0)--(0.1,0);}} (0,-1);
	      \draw (1,0) to node[sloped] {\tikz{\draw[->](0,0)--(0.1,0);}} (1,-1);
	      \draw (0,0) to node[sloped] {\tikz{\draw[-](0,0)--(0.1,0);}} (1,0);
	      \draw (0,-1) to node  {\tikz{\draw[-](0,0)--(0.1,0);}} (1,-1);
	      \node at (1/2,-1/2) {$$};
	      \draw (0,0) to node {\tikz{\draw[-](0,0)--(0.1,0);}} (1,0);
	      \draw (0,-1) to node {\tikz{\draw[-](0,0)--(0.1,0);}} (1,-1);
	      \fill[black] (0,0) circle[radius=0.02];
	      \fill[black] (1,0) circle[radius=0.02];
	      \fill[black] (0,-1) circle[radius=0.02];
	      \fill[black] (1,-1) circle[radius=0.02];
	    \end{scope}
	     \begin{scope}[xshift=0.0cm, yshift = -2.0cm]
	      \draw (0,0) to node[sloped] {\tikz{\draw[->](0,0)--(0.1,0);}} (0,-1);
	      \draw (1,0) to node[sloped] {\tikz{\draw[->](0,0)--(0.1,0);}} (1,-1);
	      \draw (0,0) to node[sloped] {\tikz{\draw[-](0,0)--(0.1,0);}} (1,0);
	      \draw (0,-1) to node  {\tikz{\draw[-](0,0)--(0.1,0);}} (1,-1);
	      \node at (1/2,-1/2) {$$};
	      \draw (0,0) to node {\tikz{\draw[-](0,0)--(0.1,0);}} (1,0);
	      \draw (0,-1) to node {\tikz{\draw[-](0,0)--(0.1,0);}} (1,-1);
	      \fill[black] (0,0) circle[radius=0.02];
	      \fill[black] (1,0) circle[radius=0.02];
	      \fill[black] (0,-1) circle[radius=0.02];
	      \fill[black] (1,-1) circle[radius=0.02];
	    \end{scope}
	    \begin{scope}[xshift=1.0cm, yshift = -2.0cm]
	      \draw (0,0) to node[sloped] {\tikz{\draw[->](0,0)--(0.1,0);}} (0,-1);
	      \draw (1,0) to node[sloped] {\tikz{\draw[->](0,0)--(0.1,0);}} (1,-1);
	      \draw (0,0) to node[sloped] {\tikz{\draw[-](0,0)--(0.1,0);}} (1,0);
	      \draw (0,-1) to node  {\tikz{\draw[-](0,0)--(0.1,0);}} (1,-1);
	      \node at (1/2,-1/2) {$h_2$};
	      \draw (0,0) to node {\tikz{\draw[-](0,0)--(0.1,0);}}(1,0);
	      \draw (0,-1) to node {\tikz{\draw[-](0,0)--(0.1,0);}} (1,-1);
	      \fill[black] (0,0) circle[radius=0.02];
	      \fill[black] (1,0) circle[radius=0.02];
	      \fill[black] (0,-1) circle[radius=0.02];
	      \fill[black] (1,-1) circle[radius=0.02];
	    \end{scope}
	     \begin{scope}[xshift=2.0cm, yshift = -2.0cm]
	      \draw (0,0) to node[sloped] {\tikz{\draw[->](0,0)--(0.1,0);}} (0,-1);
	      \draw (1,0) to node[sloped] {\tikz{\draw[->](0,0)--(0.1,0);}} (1,-1);
	      \draw (0,0) to node[sloped] {\tikz{\draw[-](0,0)--(0.1,0);}} (1,0);
	      \draw (0,-1) to node  {\tikz{\draw[-](0,0)--(0.1,0);}} (1,-1);
	      \node at (1/2,-1/2) {$$};
	      \draw (0,0) to node {\tikz{\draw[-](0,0)--(0.1,0);}}(1,0);
	      \draw (0,-1) to node {\tikz{\draw[-](0,0)--(0.1,0);}} (1,-1);
	      \fill[black] (0,0) circle[radius=0.02];
	      \fill[black] (1,0) circle[radius=0.02];
	      \fill[black] (0,-1) circle[radius=0.02];
	      \fill[black] (1,-1) circle[radius=0.02];
	    \end{scope}
    \end{scope}
    \begin{scope}[xshift=9.0cm]
	      \draw (0,0) to node[sloped] {\tikz{\draw[->](0,0)--(0.1,0);}} (0,-1);
	      \draw (1,0) to node[sloped] {\tikz{\draw[->](0,0)--(0.1,0);}} (1,-1);
	      \draw (0,0) to node[sloped] {\tikz{\draw[-](0,0)--(0.1,0);}} (1,0);
	      \draw (0,-1) to node  {\tikz{\draw[-](0,0)--(0.1,0);}} (1,-1);
	      \node at (1/2,-1/2) {$$};
	      \draw (0,0) to node {\tikz{\draw[-](0,0)--(0.1,0);}}(1,0);
	      \draw (0,-1) to node {\tikz{\draw[-](0,0)--(0.1,0);}} (1,-1);
	      \fill[black] (0,0) circle[radius=0.02];
	      \fill[black] (1,0) circle[radius=0.02];
	      \fill[black] (0,-1) circle[radius=0.02];
	      \fill[black] (1,-1) circle[radius=0.02];
	    \begin{scope}[xshift=1.0cm]
	      \draw (0,0) to node[sloped] {\tikz{\draw[->](0,0)--(0.1,0);}} (0,-1);
	      \draw (1,0) to node[sloped] {\tikz{\draw[->](0,0)--(0.1,0);}} (1,-1);
	      \draw (0,0) to node[sloped] {\tikz{\draw[-](0,0)--(0.1,0);}} (1,0);
	      \draw (0,-1) to node  {\tikz{\draw[-](0,0)--(0.1,0);}} (1,-1);
	      \node at (1/2,-1/2) {$$};
	      \draw (0,0) to node {\tikz{\draw[-](0,0)--(0.1,0);}} (1,0);
	      \draw (0,-1) to node {\tikz{\draw[-](0,0)--(0.1,0);}} (1,-1);
	      \fill[black] (0,0) circle[radius=0.02];
	      \fill[black] (1,0) circle[radius=0.02];
	      \fill[black] (0,-1) circle[radius=0.02];
	      \fill[black] (1,-1) circle[radius=0.02];
	    \end{scope}
	    	 \begin{scope}[xshift=2.0cm]
	      \draw (0,0) to node[sloped] {\tikz{\draw[->](0,0)--(0.1,0);}} (0,-1);
	      \draw (1,0) to node[sloped] {\tikz{\draw[->](0,0)--(0.1,0);}} (1,-1);
	      \draw (0,0) to node[sloped] {\tikz{\draw[-](0,0)--(0.1,0);}} (1,0);
	      \draw (0,-1) to node  {\tikz{\draw[-](0,0)--(0.1,0);}} (1,-1);
	      \node at (1/2,-1/2) {$h_1$};
	      \draw (0,0) to node {\tikz{\draw[-](0,0)--(0.1,0);}} (1,0);
	      \draw (0,-1) to node {\tikz{\draw[-](0,0)--(0.1,0);}} (1,-1);
	      \fill[black] (0,0) circle[radius=0.02];
	      \fill[black] (1,0) circle[radius=0.02];
	      \fill[black] (0,-1) circle[radius=0.02];
	      \fill[black] (1,-1) circle[radius=0.02];
	    \end{scope}
	        \begin{scope}[xshift=0.0cm, yshift = -1.0cm]
	      \draw (0,0) to node[sloped] {\tikz{\draw[->](0,0)--(0.1,0);}} (0,-1);
	      \draw (1,0) to node[sloped] {\tikz{\draw[->](0,0)--(0.1,0);}} (1,-1);
	      \draw (0,0) to node[sloped] {\tikz{\draw[-](0,0)--(0.1,0);}} (1,0);
	      \draw (0,-1) to node  {\tikz{\draw[-](0,0)--(0.1,0);}} (1,-1);
	      \node at (1/2,-1/2) {$$};
	      \draw (0,0) to node {\tikz{\draw[-](0,0)--(0.1,0);}} (1,0);
	      \draw (0,-1) to node {\tikz{\draw[-](0,0)--(0.1,0);}} (1,-1);
	      \fill[black] (0,0) circle[radius=0.02];
	      \fill[black] (1,0) circle[radius=0.02];
	      \fill[black] (0,-1) circle[radius=0.02];
	      \fill[black] (1,-1) circle[radius=0.02];
	    \end{scope}
	    \begin{scope}[xshift=1.0cm, yshift = -1.0cm]
	      \draw (0,0) to node[sloped] {\tikz{\draw[->](0,0)--(0.1,0);}} (0,-1);
	      \draw (1,0) to node[sloped] {\tikz{\draw[->](0,0)--(0.1,0);}} (1,-1);
	      \draw (0,0) to node[sloped] {\tikz{\draw[-](0,0)--(0.1,0);}} (1,0);
	      \draw (0,-1) to node  {\tikz{\draw[-](0,0)--(0.1,0);}} (1,-1);
	      \node at (1/2,-1/2) {$h_2$};
	      \draw (0,0) to node {\tikz{\draw[-](0,0)--(0.1,0);}} (1,0);
	      \draw (0,-1) to node {\tikz{\draw[-](0,0)--(0.1,0);}} (1,-1);
	      \fill[black] (0,0) circle[radius=0.02];
	      \fill[black] (1,0) circle[radius=0.02];
	      \fill[black] (0,-1) circle[radius=0.02];
	      \fill[black] (1,-1) circle[radius=0.02];
	    \end{scope}
	     \begin{scope}[xshift=2.0cm, yshift = -1.0cm]
	      \draw (0,0) to node[sloped] {\tikz{\draw[->](0,0)--(0.1,0);}} (0,-1);
	      \draw (1,0) to node[sloped] {\tikz{\draw[->](0,0)--(0.1,0);}} (1,-1);
	      \draw (0,0) to node[sloped] {\tikz{\draw[-](0,0)--(0.1,0);}} (1,0);
	      \draw (0,-1) to node  {\tikz{\draw[-](0,0)--(0.1,0);}} (1,-1);
	      \node at (1/2,-1/2) {$$};
	      \draw (0,0) to node {\tikz{\draw[-](0,0)--(0.1,0);}} (1,0);
	      \draw (0,-1) to node {\tikz{\draw[-](0,0)--(0.1,0);}} (1,-1);
	      \fill[black] (0,0) circle[radius=0.02];
	      \fill[black] (1,0) circle[radius=0.02];
	      \fill[black] (0,-1) circle[radius=0.02];
	      \fill[black] (1,-1) circle[radius=0.02];
	    \end{scope}
	     \begin{scope}[xshift=0.0cm, yshift = -2.0cm]
	      \draw (0,0) to node[sloped] {\tikz{\draw[->](0,0)--(0.1,0);}} (0,-1);
	      \draw (1,0) to node[sloped] {\tikz{\draw[->](0,0)--(0.1,0);}} (1,-1);
	      \draw (0,0) to node[sloped] {\tikz{\draw[-](0,0)--(0.1,0);}} (1,0);
	      \draw (0,-1) to node  {\tikz{\draw[-](0,0)--(0.1,0);}} (1,-1);
	      \node at (1/2,-1/2) {$h_3$};
	      \draw (0,0) to node {\tikz{\draw[-](0,0)--(0.1,0);}} (1,0);
	      \draw (0,-1) to node {\tikz{\draw[-](0,0)--(0.1,0);}} (1,-1);
	      \fill[black] (0,0) circle[radius=0.02];
	      \fill[black] (1,0) circle[radius=0.02];
	      \fill[black] (0,-1) circle[radius=0.02];
	      \fill[black] (1,-1) circle[radius=0.02];
	    \end{scope}
	    \begin{scope}[xshift=1.0cm, yshift = -2.0cm]
	      \draw (0,0) to node[sloped] {\tikz{\draw[->](0,0)--(0.1,0);}} (0,-1);
	      \draw (1,0) to node[sloped] {\tikz{\draw[->](0,0)--(0.1,0);}} (1,-1);
	      \draw (0,0) to node[sloped] {\tikz{\draw[-](0,0)--(0.1,0);}} (1,0);
	      \draw (0,-1) to node  {\tikz{\draw[-](0,0)--(0.1,0);}} (1,-1);
	      \node at (1/2,-1/2) {$$};
	      \draw (0,0) to node {\tikz{\draw[-](0,0)--(0.1,0);}}(1,0);
	      \draw (0,-1) to node {\tikz{\draw[-](0,0)--(0.1,0);}} (1,-1);
	      \fill[black] (0,0) circle[radius=0.02];
	      \fill[black] (1,0) circle[radius=0.02];
	      \fill[black] (0,-1) circle[radius=0.02];
	      \fill[black] (1,-1) circle[radius=0.02];
	    \end{scope}
	     \begin{scope}[xshift=2.0cm, yshift = -2.0cm]
	      \draw (0,0) to node[sloped] {\tikz{\draw[->](0,0)--(0.1,0);}} (0,-1);
	      \draw (1,0) to node[sloped] {\tikz{\draw[->](0,0)--(0.1,0);}} (1,-1);
	      \draw (0,0) to node[sloped] {\tikz{\draw[-](0,0)--(0.1,0);}} (1,0);
	      \draw (0,-1) to node  {\tikz{\draw[-](0,0)--(0.1,0);}} (1,-1);
	      \node at (1/2,-1/2) {$$};
	      \draw (0,0) to node {\tikz{\draw[-](0,0)--(0.1,0);}}(1,0);
	      \draw (0,-1) to node {\tikz{\draw[-](0,0)--(0.1,0);}} (1,-1);
	      \fill[black] (0,0) circle[radius=0.02];
	      \fill[black] (1,0) circle[radius=0.02];
	      \fill[black] (0,-1) circle[radius=0.02];
	      \fill[black] (1,-1) circle[radius=0.02];
	    \end{scope}
    \end{scope}
     \draw[->] (4,-2) to (8,-2);
      \node at (6,-2) [fill=white]{$\{h_3 ,h_2 h_1\}$};
       \draw[->] (1.5 , 0.1+1/2) to (4.4-1/2,-1+ 2*1.732);
      \node at (2.7,1.6) [fill=white]{$\act{(\act{\del h_3}h_2)}\{h_3, h_1 \}$};
       \draw[->](3+4.4+1,-1+ 2*1.732) to (10.5, 0.6);
      \node at (9.5,1.6)  [fill=white]{$\{h_3, h_2\}$};
  \end{tikzpicture}
\end{center}
\caption{Figure of property \ref{rpl} of 2-crossed module}
\label{d:PF_lift_right}
\end{figure}
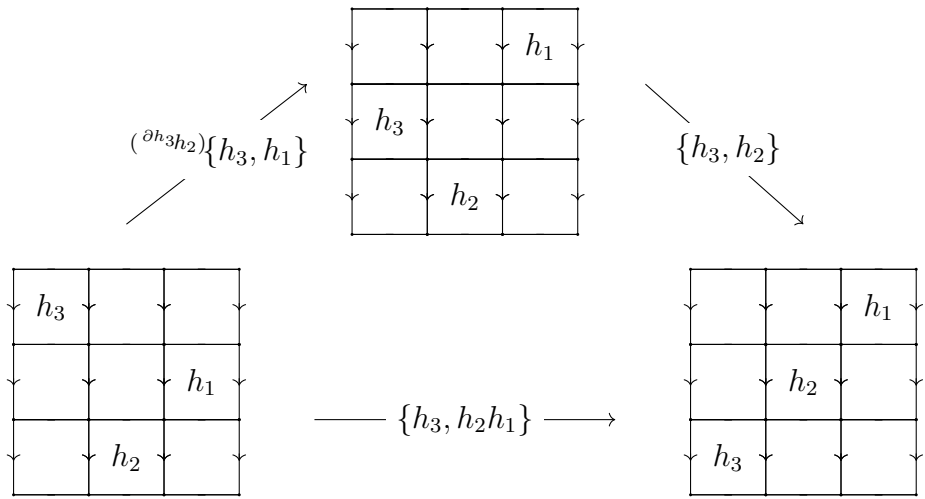

Property~\ref{pfeq} of 2-crossed module, i.e. $\act{g}\{h_2, h_1\} = \{\act{g}h_2, \act{g} h_1\}$ for $g\in G$ and $h_1,h_2\in H$ guarantees that the diagram is well-defined. Let $h_1,h_2,h_3\in H$. We now consider Figure~\ref{d:PF_lift_wd}.
\begin{figure}
\begin{center}
  \begin{tikzpicture}[scale=1]
  	\begin{scope}[xshift=0.0cm]
	      \draw (0,0) to node[sloped] {\tikz{\draw[->](0,0)--(0.1,0);}} (0,-1);
	      \draw (1,0) to node[sloped] {\tikz{\draw[->](0,0)--(0.1,0);}} (1,-1);
	      \draw (0,0) to node[sloped] {\tikz{\draw[-](0,0)--(0.1,0);}} (1,0);
	      \draw (0,-1) to node  {\tikz{\draw[-](0,0)--(0.1,0);}} (1,-1);
	      \node at (1/2,-1/2) {$h_3$};
	      \draw (0,0) to node {\tikz{\draw[-](0,0)--(0.1,0);}}(1,0);
	      \draw (0,-1) to node {\tikz{\draw[-](0,0)--(0.1,0);}} (1,-1);
	      \fill[black] (0,0) circle[radius=0.02];
	      \fill[black] (1,0) circle[radius=0.02];
	      \fill[black] (0,-1) circle[radius=0.02];
	      \fill[black] (1,-1) circle[radius=0.02];
	    \begin{scope}[xshift=1.0cm]
	      \draw (0,0) to node[sloped] {\tikz{\draw[->](0,0)--(0.1,0);}} (0,-1);
	      \draw (1,0) to node[sloped] {\tikz{\draw[->](0,0)--(0.1,0);}} (1,-1);
	      \draw (0,0) to node[sloped] {\tikz{\draw[-](0,0)--(0.1,0);}} (1,0);
	      \draw (0,-1) to node  {\tikz{\draw[-](0,0)--(0.1,0);}} (1,-1);
	      \node at (1/2,-1/2) {$$};
	      \draw (0,0) to node {\tikz{\draw[-](0,0)--(0.1,0);}} (1,0);
	      \draw (0,-1) to node {\tikz{\draw[-](0,0)--(0.1,0);}} (1,-1);
	      \fill[black] (0,0) circle[radius=0.02];
	      \fill[black] (1,0) circle[radius=0.02];
	      \fill[black] (0,-1) circle[radius=0.02];
	      \fill[black] (1,-1) circle[radius=0.02];
	    \end{scope}
	    	 \begin{scope}[xshift=2.0cm]
	      \draw (0,0) to node[sloped] {\tikz{\draw[->](0,0)--(0.1,0);}} (0,-1);
	      \draw (1,0) to node[sloped] {\tikz{\draw[->](0,0)--(0.1,0);}} (1,-1);
	      \draw (0,0) to node[sloped] {\tikz{\draw[-](0,0)--(0.1,0);}} (1,0);
	      \draw (0,-1) to node  {\tikz{\draw[-](0,0)--(0.1,0);}} (1,-1);
	      \node at (1/2,-1/2) {$$};
	      \draw (0,0) to node {\tikz{\draw[-](0,0)--(0.1,0);}} (1,0);
	      \draw (0,-1) to node {\tikz{\draw[-](0,0)--(0.1,0);}} (1,-1);
	      \fill[black] (0,0) circle[radius=0.02];
	      \fill[black] (1,0) circle[radius=0.02];
	      \fill[black] (0,-1) circle[radius=0.02];
	      \fill[black] (1,-1) circle[radius=0.02];
	    \end{scope}
	        \begin{scope}[xshift=0.0cm, yshift = -1.0cm]
	      \draw (0,0) to node[sloped] {\tikz{\draw[->](0,0)--(0.1,0);}} (0,-1);
	      \draw (1,0) to node[sloped] {\tikz{\draw[->](0,0)--(0.1,0);}} (1,-1);
	      \draw (0,0) to node[sloped] {\tikz{\draw[-](0,0)--(0.1,0);}} (1,0);
	      \draw (0,-1) to node  {\tikz{\draw[-](0,0)--(0.1,0);}} (1,-1);
	      \node at (1/2,-1/2) {$$};
	      \draw (0,0) to node {\tikz{\draw[-](0,0)--(0.1,0);}} (1,0);
	      \draw (0,-1) to node {\tikz{\draw[-](0,0)--(0.1,0);}} (1,-1);
	      \fill[black] (0,0) circle[radius=0.02];
	      \fill[black] (1,0) circle[radius=0.02];
	      \fill[black] (0,-1) circle[radius=0.02];
	      \fill[black] (1,-1) circle[radius=0.02];
	    \end{scope}
	    \begin{scope}[xshift=1.0cm, yshift = -1.0cm]
	      \draw (0,0) to node[sloped] {\tikz{\draw[->](0,0)--(0.1,0);}} (0,-1);
	      \draw (1,0) to node[sloped] {\tikz{\draw[->](0,0)--(0.1,0);}} (1,-1);
	      \draw (0,0) to node[sloped] {\tikz{\draw[-](0,0)--(0.1,0);}} (1,0);
	      \draw (0,-1) to node  {\tikz{\draw[-](0,0)--(0.1,0);}} (1,-1);
	      \node at (1/2,-1/2) {$h_2$};
	      \draw (0,0) to node {\tikz{\draw[-](0,0)--(0.1,0);}} (1,0);
	      \draw (0,-1) to node {\tikz{\draw[-](0,0)--(0.1,0);}} (1,-1);
	      \fill[black] (0,0) circle[radius=0.02];
	      \fill[black] (1,0) circle[radius=0.02];
	      \fill[black] (0,-1) circle[radius=0.02];
	      \fill[black] (1,-1) circle[radius=0.02];
	    \end{scope}
	     \begin{scope}[xshift=2.0cm, yshift = -1.0cm]
	      \draw (0,0) to node[sloped] {\tikz{\draw[->](0,0)--(0.1,0);}} (0,-1);
	      \draw (1,0) to node[sloped] {\tikz{\draw[->](0,0)--(0.1,0);}} (1,-1);
	      \draw (0,0) to node[sloped] {\tikz{\draw[-](0,0)--(0.1,0);}} (1,0);
	      \draw (0,-1) to node  {\tikz{\draw[-](0,0)--(0.1,0);}} (1,-1);
	      \node at (1/2,-1/2) {$$};
	      \draw (0,0) to node {\tikz{\draw[-](0,0)--(0.1,0);}} (1,0);
	      \draw (0,-1) to node {\tikz{\draw[-](0,0)--(0.1,0);}} (1,-1);
	      \fill[black] (0,0) circle[radius=0.02];
	      \fill[black] (1,0) circle[radius=0.02];
	      \fill[black] (0,-1) circle[radius=0.02];
	      \fill[black] (1,-1) circle[radius=0.02];
	    \end{scope}
	     \begin{scope}[xshift=0.0cm, yshift = -2.0cm]
	      \draw (0,0) to node[sloped] {\tikz{\draw[->](0,0)--(0.1,0);}} (0,-1);
	      \draw (1,0) to node[sloped] {\tikz{\draw[->](0,0)--(0.1,0);}} (1,-1);
	      \draw (0,0) to node[sloped] {\tikz{\draw[-](0,0)--(0.1,0);}} (1,0);
	      \draw (0,-1) to node  {\tikz{\draw[-](0,0)--(0.1,0);}} (1,-1);
	      \node at (1/2,-1/2) {$$};
	      \draw (0,0) to node {\tikz{\draw[-](0,0)--(0.1,0);}} (1,0);
	      \draw (0,-1) to node {\tikz{\draw[-](0,0)--(0.1,0);}} (1,-1);
	      \fill[black] (0,0) circle[radius=0.02];
	      \fill[black] (1,0) circle[radius=0.02];
	      \fill[black] (0,-1) circle[radius=0.02];
	      \fill[black] (1,-1) circle[radius=0.02];
	    \end{scope}
	    \begin{scope}[xshift=1.0cm, yshift = -2.0cm]
	      \draw (0,0) to node[sloped] {\tikz{\draw[->](0,0)--(0.1,0);}} (0,-1);
	      \draw (1,0) to node[sloped] {\tikz{\draw[->](0,0)--(0.1,0);}} (1,-1);
	      \draw (0,0) to node[sloped] {\tikz{\draw[-](0,0)--(0.1,0);}} (1,0);
	      \draw (0,-1) to node  {\tikz{\draw[-](0,0)--(0.1,0);}} (1,-1);
	      \node at (1/2,-1/2) {$$};
	      \draw (0,0) to node {\tikz{\draw[-](0,0)--(0.1,0);}}(1,0);
	      \draw (0,-1) to node {\tikz{\draw[-](0,0)--(0.1,0);}} (1,-1);
	      \fill[black] (0,0) circle[radius=0.02];
	      \fill[black] (1,0) circle[radius=0.02];
	      \fill[black] (0,-1) circle[radius=0.02];
	      \fill[black] (1,-1) circle[radius=0.02];
	    \end{scope}
	     \begin{scope}[xshift=2.0cm, yshift = -2.0cm]
	      \draw (0,0) to node[sloped] {\tikz{\draw[->](0,0)--(0.1,0);}} (0,-1);
	      \draw (1,0) to node[sloped] {\tikz{\draw[->](0,0)--(0.1,0);}} (1,-1);
	      \draw (0,0) to node[sloped] {\tikz{\draw[-](0,0)--(0.1,0);}} (1,0);
	      \draw (0,-1) to node  {\tikz{\draw[-](0,0)--(0.1,0);}} (1,-1);
	      \node at (1/2,-1/2) {$h_1$};
	      \draw (0,0) to node {\tikz{\draw[-](0,0)--(0.1,0);}}(1,0);
	      \draw (0,-1) to node {\tikz{\draw[-](0,0)--(0.1,0);}} (1,-1);
	      \fill[black] (0,0) circle[radius=0.02];
	      \fill[black] (1,0) circle[radius=0.02];
	      \fill[black] (0,-1) circle[radius=0.02];
	      \fill[black] (1,-1) circle[radius=0.02];
	    \end{scope}
    \end{scope}
    \begin{scope}[xshift=6cm]
	      \draw (0,0) to node[sloped] {\tikz{\draw[->](0,0)--(0.1,0);}} (0,-1);
	      \draw (1,0) to node[sloped] {\tikz{\draw[->](0,0)--(0.1,0);}} (1,-1);
	      \draw (0,0) to node[sloped] {\tikz{\draw[-](0,0)--(0.1,0);}} (1,0);
	      \draw (0,-1) to node  {\tikz{\draw[-](0,0)--(0.1,0);}} (1,-1);
	      \node at (1/2,-1/2) {$h_3$};
	      \draw (0,0) to node {\tikz{\draw[-](0,0)--(0.1,0);}}(1,0);
	      \draw (0,-1) to node {\tikz{\draw[-](0,0)--(0.1,0);}} (1,-1);
	      \fill[black] (0,0) circle[radius=0.02];
	      \fill[black] (1,0) circle[radius=0.02];
	      \fill[black] (0,-1) circle[radius=0.02];
	      \fill[black] (1,-1) circle[radius=0.02];
	    \begin{scope}[xshift=1.0cm]
	      \draw (0,0) to node[sloped] {\tikz{\draw[->](0,0)--(0.1,0);}} (0,-1);
	      \draw (1,0) to node[sloped] {\tikz{\draw[->](0,0)--(0.1,0);}} (1,-1);
	      \draw (0,0) to node[sloped] {\tikz{\draw[-](0,0)--(0.1,0);}} (1,0);
	      \draw (0,-1) to node  {\tikz{\draw[-](0,0)--(0.1,0);}} (1,-1);
	      \node at (1/2,-1/2) {$$};
	      \draw (0,0) to node {\tikz{\draw[-](0,0)--(0.1,0);}} (1,0);
	      \draw (0,-1) to node {\tikz{\draw[-](0,0)--(0.1,0);}} (1,-1);
	      \fill[black] (0,0) circle[radius=0.02];
	      \fill[black] (1,0) circle[radius=0.02];
	      \fill[black] (0,-1) circle[radius=0.02];
	      \fill[black] (1,-1) circle[radius=0.02];
	    \end{scope}
	    	 \begin{scope}[xshift=2.0cm]
	      \draw (0,0) to node[sloped] {\tikz{\draw[->](0,0)--(0.1,0);}} (0,-1);
	      \draw (1,0) to node[sloped] {\tikz{\draw[->](0,0)--(0.1,0);}} (1,-1);
	      \draw (0,0) to node[sloped] {\tikz{\draw[-](0,0)--(0.1,0);}} (1,0);
	      \draw (0,-1) to node  {\tikz{\draw[-](0,0)--(0.1,0);}} (1,-1);
	      \node at (1/2,-1/2) {$$};
	      \draw (0,0) to node {\tikz{\draw[-](0,0)--(0.1,0);}} (1,0);
	      \draw (0,-1) to node {\tikz{\draw[-](0,0)--(0.1,0);}} (1,-1);
	      \fill[black] (0,0) circle[radius=0.02];
	      \fill[black] (1,0) circle[radius=0.02];
	      \fill[black] (0,-1) circle[radius=0.02];
	      \fill[black] (1,-1) circle[radius=0.02];
	    \end{scope}
	        \begin{scope}[xshift=0.0cm, yshift = -1.0cm]
	      \draw (0,0) to node[sloped] {\tikz{\draw[->](0,0)--(0.1,0);}} (0,-1);
	      \draw (1,0) to node[sloped] {\tikz{\draw[->](0,0)--(0.1,0);}} (1,-1);
	      \draw (0,0) to node[sloped] {\tikz{\draw[-](0,0)--(0.1,0);}} (1,0);
	      \draw (0,-1) to node  {\tikz{\draw[-](0,0)--(0.1,0);}} (1,-1);
	      \node at (1/2,-1/2) {$$};
	      \draw (0,0) to node {\tikz{\draw[-](0,0)--(0.1,0);}} (1,0);
	      \draw (0,-1) to node {\tikz{\draw[-](0,0)--(0.1,0);}} (1,-1);
	      \fill[black] (0,0) circle[radius=0.02];
	      \fill[black] (1,0) circle[radius=0.02];
	      \fill[black] (0,-1) circle[radius=0.02];
	      \fill[black] (1,-1) circle[radius=0.02];
	    \end{scope}
	    \begin{scope}[xshift=1.0cm, yshift = -1.0cm]
	      \draw (0,0) to node[sloped] {\tikz{\draw[->](0,0)--(0.1,0);}} (0,-1);
	      \draw (1,0) to node[sloped] {\tikz{\draw[->](0,0)--(0.1,0);}} (1,-1);
	      \draw (0,0) to node[sloped] {\tikz{\draw[-](0,0)--(0.1,0);}} (1,0);
	      \draw (0,-1) to node  {\tikz{\draw[-](0,0)--(0.1,0);}} (1,-1);
	      \node at (1/2,-1/2) {$$};
	      \draw (0,0) to node {\tikz{\draw[-](0,0)--(0.1,0);}} (1,0);
	      \draw (0,-1) to node {\tikz{\draw[-](0,0)--(0.1,0);}} (1,-1);
	      \fill[black] (0,0) circle[radius=0.02];
	      \fill[black] (1,0) circle[radius=0.02];
	      \fill[black] (0,-1) circle[radius=0.02];
	      \fill[black] (1,-1) circle[radius=0.02];
	    \end{scope}
	     \begin{scope}[xshift=2.0cm, yshift = -1.0cm]
	      \draw (0,0) to node[sloped] {\tikz{\draw[->](0,0)--(0.1,0);}} (0,-1);
	      \draw (1,0) to node[sloped] {\tikz{\draw[->](0,0)--(0.1,0);}} (1,-1);
	      \draw (0,0) to node[sloped] {\tikz{\draw[-](0,0)--(0.1,0);}} (1,0);
	      \draw (0,-1) to node  {\tikz{\draw[-](0,0)--(0.1,0);}} (1,-1);
	      \node at (1/2,-1/2) {$h_1$};
	      \draw (0,0) to node {\tikz{\draw[-](0,0)--(0.1,0);}} (1,0);
	      \draw (0,-1) to node {\tikz{\draw[-](0,0)--(0.1,0);}} (1,-1);
	      \fill[black] (0,0) circle[radius=0.02];
	      \fill[black] (1,0) circle[radius=0.02];
	      \fill[black] (0,-1) circle[radius=0.02];
	      \fill[black] (1,-1) circle[radius=0.02];
	    \end{scope}
	     \begin{scope}[xshift=0.0cm, yshift = -2.0cm]
	      \draw (0,0) to node[sloped] {\tikz{\draw[->](0,0)--(0.1,0);}} (0,-1);
	      \draw (1,0) to node[sloped] {\tikz{\draw[->](0,0)--(0.1,0);}} (1,-1);
	      \draw (0,0) to node[sloped] {\tikz{\draw[-](0,0)--(0.1,0);}} (1,0);
	      \draw (0,-1) to node  {\tikz{\draw[-](0,0)--(0.1,0);}} (1,-1);
	      \node at (1/2,-1/2) {$$};
	      \draw (0,0) to node {\tikz{\draw[-](0,0)--(0.1,0);}} (1,0);
	      \draw (0,-1) to node {\tikz{\draw[-](0,0)--(0.1,0);}} (1,-1);
	      \fill[black] (0,0) circle[radius=0.02];
	      \fill[black] (1,0) circle[radius=0.02];
	      \fill[black] (0,-1) circle[radius=0.02];
	      \fill[black] (1,-1) circle[radius=0.02];
	    \end{scope}
	    \begin{scope}[xshift=1.0cm, yshift = -2.0cm]
	      \draw (0,0) to node[sloped] {\tikz{\draw[->](0,0)--(0.1,0);}} (0,-1);
	      \draw (1,0) to node[sloped] {\tikz{\draw[->](0,0)--(0.1,0);}} (1,-1);
	      \draw (0,0) to node[sloped] {\tikz{\draw[-](0,0)--(0.1,0);}} (1,0);
	      \draw (0,-1) to node  {\tikz{\draw[-](0,0)--(0.1,0);}} (1,-1);
	      \node at (1/2,-1/2) {$h_2$};
	      \draw (0,0) to node {\tikz{\draw[-](0,0)--(0.1,0);}}(1,0);
	      \draw (0,-1) to node {\tikz{\draw[-](0,0)--(0.1,0);}} (1,-1);
	      \fill[black] (0,0) circle[radius=0.02];
	      \fill[black] (1,0) circle[radius=0.02];
	      \fill[black] (0,-1) circle[radius=0.02];
	      \fill[black] (1,-1) circle[radius=0.02];
	    \end{scope}
	     \begin{scope}[xshift=2.0cm, yshift = -2.0cm]
	      \draw (0,0) to node[sloped] {\tikz{\draw[->](0,0)--(0.1,0);}} (0,-1);
	      \draw (1,0) to node[sloped] {\tikz{\draw[->](0,0)--(0.1,0);}} (1,-1);
	      \draw (0,0) to node[sloped] {\tikz{\draw[-](0,0)--(0.1,0);}} (1,0);
	      \draw (0,-1) to node  {\tikz{\draw[-](0,0)--(0.1,0);}} (1,-1);
	      \node at (1/2,-1/2) {$$};
	      \draw (0,0) to node {\tikz{\draw[-](0,0)--(0.1,0);}}(1,0);
	      \draw (0,-1) to node {\tikz{\draw[-](0,0)--(0.1,0);}} (1,-1);
	      \fill[black] (0,0) circle[radius=0.02];
	      \fill[black] (1,0) circle[radius=0.02];
	      \fill[black] (0,-1) circle[radius=0.02];
	      \fill[black] (1,-1) circle[radius=0.02];
	    \end{scope}
    \end{scope}
     \draw[->] (3.5,-1.5) to (5.8,-1.5);
      \node at (4.7,-1.5) [fill=white]{$l$};
  \end{tikzpicture}
\end{center}
\caption{Figure of property \ref{pfeq} of 2-crossed module}
\label{d:PF_lift_wd}
\end{figure}
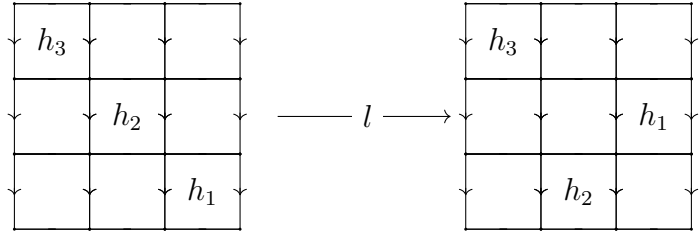
From Figure~\ref{d:PF_lift_wd}, we can derive two different formulas. One is 
\[
l = {\rm id}_{{\rm id}_{\del(h_3)}} \#_1 \{h_2,h_1\} = \act{\del h_3} \{h_2,h_1\}.
\]
 The other is 
 \[
 l =\{{\rm id}_{\del(h_3)}\#_1 h_2, {\rm id}_{\del(h_3)} \#_1h_1 \} =\{\act{\del h_3}h_2,\act{\del h_3}h_1 \} .
 \]
  By Property~\ref{pfeq} of 2-crossed module, these two formulas are identical.

We can now interpret the two new operators in the 3-crossed module — the left-homannian and right-homannian — as illustrated in the diagrams. The left-homannian is a twisted version of Property~\ref{lpl} of 2-crossed module, and it is represented in Figure~\ref{d:lhmn}.
\begin{figure}
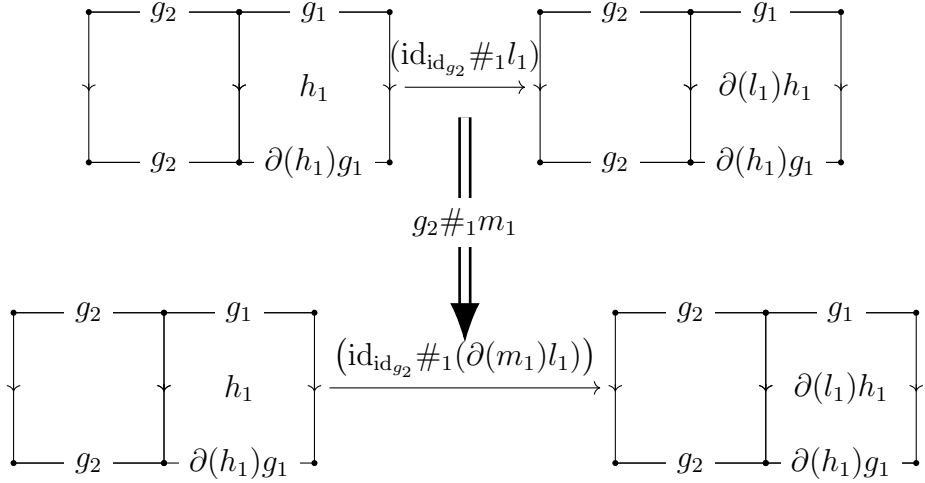

\begin{center}
  % [inline block 0: 4 envs, 49672 chars -> data_tex | \begin{tikzpicture}[scale=1]   	\begin{scope}[xshift=0.0cm]...]

\end{center}
\caption{Diagram of $g_2 \#_1 m_1$}
\label{d:g2_m1}
\end{figure}

For elements of $M$, there are four types of compositions: $\#_1, \#_2, \#_3$, and $\#_4$. Here, $\#_4$ denotes the vertical composition in $M$, while $\#_1, \#_2$ ,and  $\#_3$ denote horizontal compositions, defined in a similar manner as before.
For each $g_2, g_1 \in G$, $h_2, h_1\in H$, $l_2, l_1 \in L$, and $m_1, m_2 \in M$, we define
\[
g_2 \#_1 m_1  \coloneqq {\rm id}_{{\rm id}_{{\rm id}_{g_2}}} \#_1 m_1, \qquad m_2 \#_1 g_1  \coloneqq m_2 \#_1  {\rm id}_{{\rm id}_{{\rm id}_{g_1}}}.
\]
 These are represented in Figures~\ref{d:g2_m1} and~\ref{d:m2_g1}, respectively. 

\begin{figure}[!]
\begin{center}
  \begin{tikzpicture}[scale=2]
    \begin{scope}[xshift=0.0cm]
      \draw[->] (2.1,-1/2) to (2.9,-1/2);
      \node at (2.5,-1/2) [above]{$( l_2 \#_1  {\rm id}_{{\rm id}_{g_1}} )$};
      \draw (0,0) to node[sloped] {\tikz{\draw[->](0,0)--(0.1,0);}} (0,-1);
      \draw (1,0) to node[sloped] {\tikz{\draw[->](0,0)--(0.1,0);}} (1,-1);
      \draw (0,0) to node[sloped] {\tikz{\draw[-](0,0)--(0.1,0);}} (1,0);
      \draw (0,-1) to node  {\tikz{\draw[-](0,0)--(0.1,0);}} (1,-1);
      \node at (1/2,-1/2) {$h_2$};
      \draw (0,0) to node [fill=white]{$g_2$} (1,0);
      \draw (0,-1) to node [fill=white]{$\del(h_2)g_2$} (1,-1);
      \fill[black] (0,0) circle[radius=0.02];
      \fill[black] (1,0) circle[radius=0.02];
      \fill[black] (0,-1) circle[radius=0.02];
      \fill[black] (1,-1) circle[radius=0.02];
      \begin{scope}[xshift=1.0cm]
      \draw (0,0) to node[sloped] {\tikz{\draw[->](0,0)--(0.1,0);}} (0,-1);
      \draw (1,0) to node[sloped] {\tikz{\draw[->](0,0)--(0.1,0);}} (1,-1);
      \draw (0,0) to node[sloped] {\tikz{\draw[-](0,0)--(0.1,0);}} (1,0);
      \node at (1/2,-1/2) {$$};
      \draw (0,0) to node [fill=white]{$g_1$} (1,0);
      \draw (0,-1) to node[fill=white]{$g_1$} (1,-1);
      \fill[black] (0,0) circle[radius=0.02];
      \fill[black] (1,0) circle[radius=0.02];
      \fill[black] (0,-1) circle[radius=0.02];
      \fill[black] (1,-1) circle[radius=0.02];
	\end{scope}
    \begin{scope}[xshift=3.0cm]
      \draw (0,0) to node[sloped] {\tikz{\draw[->](0,0)--(0.1,0);}} (0,-1);
      \draw (1,0) to node[sloped] {\tikz{\draw[->](0,0)--(0.1,0);}} (1,-1);
      \draw (0,0) to node[sloped] {\tikz{\draw[-](0,0)--(0.1,0);}} (1,0);
      \draw (0,-1) to node  {\tikz{\draw[-](0,0)--(0.1,0);}} (1,-1);
      \node at (1/2,-1/2) {$\del(l_2)h_2$};
	 \draw (0,0) to node [fill=white]{$g_2$} (1,0);
      \draw (0,-1) to node [fill=white]{$\del(h_2)g_2$} (1,-1);
      \fill[black] (0,0) circle[radius=0.02];
      \fill[black] (1,0) circle[radius=0.02];
      \fill[black] (0,-1) circle[radius=0.02];
      \fill[black] (1,-1) circle[radius=0.02];
	\end{scope}
	\begin{scope}[xshift=4.0cm]
      \draw (0,0) to node[sloped] {\tikz{\draw[->](0,0)--(0.1,0);}} (0,-1);
      \draw (1,0) to node[sloped] {\tikz{\draw[->](0,0)--(0.1,0);}} (1,-1);
      \draw (0,0) to node[sloped] {\tikz{\draw[-](0,0)--(0.1,0);}} (1,0);
      \draw (0,-1) to node  {\tikz{\draw[-](0,0)--(0.1,0);}} (1,-1);
      \node at (1/2,-1/2) {$$};
      \draw (0,0) to node [fill=white]{$g_1$} (1,0);
      \draw (0,-1) to node[fill=white]{$g_1$} (1,-1);
      \fill[black] (0,0) circle[radius=0.02];
      \fill[black] (1,0) circle[radius=0.02];
      \fill[black] (0,-1) circle[radius=0.02];
      \fill[black] (1,-1) circle[radius=0.02];
	\end{scope}
	  \end{scope}
	 \begin{scope}[xshift=0cm, yshift=-2.0cm]
	 \draw[->] (1.6,-1/2) to (3.4,-1/2);
      \node at (2.5,-1/2) [above]{$\left( (\del(m_2)l_2) \#_1  {\rm id}_{{\rm id}_{g_1}}\right)$};
	 \begin{scope}[xshift=-0.5cm]
      \draw (0,0) to node[sloped] {\tikz{\draw[->](0,0)--(0.1,0);}} (0,-1);
      \draw (1,0) to node[sloped] {\tikz{\draw[->](0,0)--(0.1,0);}} (1,-1);
      \draw (0,0) to node[sloped] {\tikz{\draw[-](0,0)--(0.1,0);}} (1,0);
      \draw (0,-1) to node  {\tikz{\draw[-](0,0)--(0.1,0);}} (1,-1);
      \node at (1/2,-1/2) {$h_2$};
      \draw (0,0) to node [fill=white]{$g_2$} (1,0);
      \draw (0,-1) to node [fill=white]{$\del(h_2)g_2$} (1,-1);
      \fill[black] (0,0) circle[radius=0.02];
      \fill[black] (1,0) circle[radius=0.02];
      \fill[black] (0,-1) circle[radius=0.02];
      \fill[black] (1,-1) circle[radius=0.02];
      \end{scope}
      \begin{scope}[xshift=0.5cm]
      \draw (0,0) to node[sloped] {\tikz{\draw[->](0,0)--(0.1,0);}} (0,-1);
      \draw (1,0) to node[sloped] {\tikz{\draw[->](0,0)--(0.1,0);}} (1,-1);
      \draw (0,0) to node[sloped] {\tikz{\draw[-](0,0)--(0.1,0);}} (1,0);
      \node at (1/2,-1/2) {$$};
      \draw (0,0) to node [fill=white]{$g_1$} (1,0);
      \draw (0,-1) to node[fill=white]{$g_1$} (1,-1);
      \fill[black] (0,0) circle[radius=0.02];
      \fill[black] (1,0) circle[radius=0.02];
      \fill[black] (0,-1) circle[radius=0.02];
      \fill[black] (1,-1) circle[radius=0.02];
	\end{scope}
    \begin{scope}[xshift=3.5cm]
      \draw (0,0) to node[sloped] {\tikz{\draw[->](0,0)--(0.1,0);}} (0,-1);
      \draw (1,0) to node[sloped] {\tikz{\draw[->](0,0)--(0.1,0);}} (1,-1);
      \draw (0,0) to node[sloped] {\tikz{\draw[-](0,0)--(0.1,0);}} (1,0);
      \draw (0,-1) to node  {\tikz{\draw[-](0,0)--(0.1,0);}} (1,-1);
      \node at (1/2,-1/2) {$\del(l_2)h_2$};
      \draw (0,0) to node [fill=white]{$g_2$} (1,0);
      \draw (0,-1) to node [fill=white]{$\del(h_2)g_2$} (1,-1);
      \fill[black] (0,0) circle[radius=0.02];
      \fill[black] (1,0) circle[radius=0.02];
      \fill[black] (0,-1) circle[radius=0.02];
      \fill[black] (1,-1) circle[radius=0.02];
	\end{scope}
	\begin{scope}[xshift=4.5cm]
      \draw (0,0) to node[sloped] {\tikz{\draw[->](0,0)--(0.1,0);}} (0,-1);
      \draw (1,0) to node[sloped] {\tikz{\draw[->](0,0)--(0.1,0);}} (1,-1);
      \draw (0,0) to node[sloped] {\tikz{\draw[-](0,0)--(0.1,0);}} (1,0);
      \draw (0,-1) to node  {\tikz{\draw[-](0,0)--(0.1,0);}} (1,-1);
      \node at (1/2,-1/2) {$$};
      \draw (0,0) to node [fill=white]{$g_1$} (1,0);
      \draw (0,-1) to node[fill=white]{$g_1$} (1,-1);
      \fill[black] (0,0) circle[radius=0.02];
      \fill[black] (1,0) circle[radius=0.02];
      \fill[black] (0,-1) circle[radius=0.02];
      \fill[black] (1,-1) circle[radius=0.02];
	\end{scope}
	  \end{scope}
	  \draw [line width=1pt, double distance=3pt, arrows = {-Latex[length=0pt 3 0]}] (2.5,-0.7) -- (2.5,-2.2);
      \node at (2.5, -1.4)  [fill=white]{$m_2\#_1 g_1$};
  \end{tikzpicture}
\end{center}
\caption{Diagram of $m_2 \#_1 g_1$}
\label{d:m2_g1}
\end{figure}
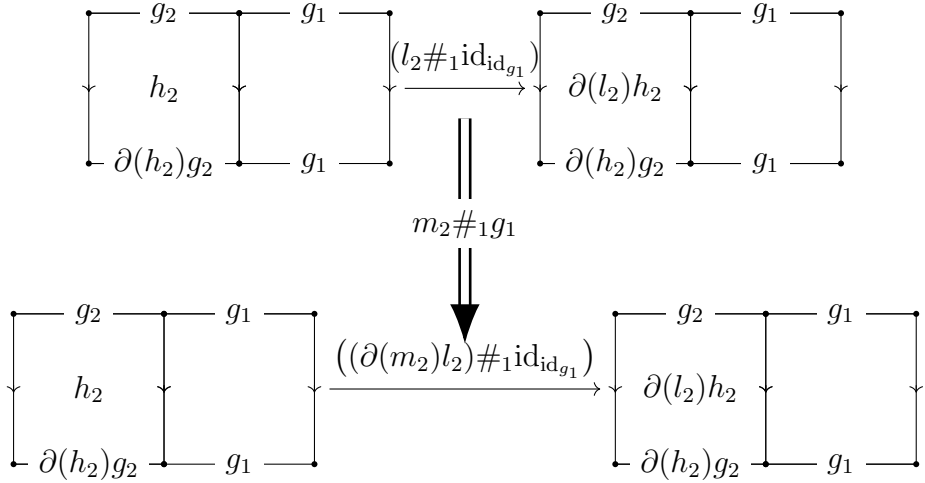

For each $h_2, h_1\in H$, $l_2, l_1 \in L$, and $m_1, m_2 \in M$, we define 
\[
h_2 \#_2 m_1 \coloneqq {\rm id}_{{\rm id}_{h_2}} \#_2 m_1, \qquad m_2 \#_2 h_1 \coloneqq m_1 \#_2 {\rm id}_{{\rm id}_{h_1}},
\]
 which are represented in Figures~\ref{d:h2_m1} and~\ref{d:m2_h1}, respectively.

\begin{figure}
\begin{center}
  \begin{tikzpicture}[scale=2]
    \begin{scope}[xshift=0.0cm]
      \draw[->] (1.1,-1) to (2.4,-1);
      \node at (1.75,-1) [above]{$(   {\rm id}_{h_2}  \#_1 l_1 )$};
      \draw (0,0) to node[sloped] {\tikz{\draw[->](0,0)--(0.1,0);}} (0,-1);
      \draw (1,0) to node[sloped] {\tikz{\draw[->](0,0)--(0.1,0);}} (1,-1);
      \draw (0,0) to node[sloped] {\tikz{\draw[-](0,0)--(0.1,0);}} (1,0);
      \draw (0,-1) to node  {\tikz{\draw[-](0,0)--(0.1,0);}} (1,-1);
      \node at (1/2,-1/2) {$h_1$};
      \draw (0,0) to node {\tikz{\draw[-](0,0)--(0.1,0);}} (1,0);
      \draw (0,-1) to node {\tikz{\draw[-](0,0)--(0.1,0);}}(1,-1);
      \fill[black] (0,0) circle[radius=0.02];
      \fill[black] (1,0) circle[radius=0.02];
      \fill[black] (0,-1) circle[radius=0.02];
      \fill[black] (1,-1) circle[radius=0.02];
      \begin{scope}[yshift=-1.0cm]
      \draw (0,0) to node[sloped] {\tikz{\draw[->](0,0)--(0.1,0);}} (0,-1);
      \draw (1,0) to node[sloped] {\tikz{\draw[->](0,0)--(0.1,0);}} (1,-1);
      \draw (0,0) to node[sloped] {\tikz{\draw[-](0,0)--(0.1,0);}} (1,0);
      \node at (1/2,-1/2) {$h_2$};
      \draw (0,0) to node {\tikz{\draw[-](0,0)--(0.1,0);}} (1,0);
      \draw (0,-1) to node{\tikz{\draw[-](0,0)--(0.1,0);}} (1,-1);
      \fill[black] (0,0) circle[radius=0.02];
      \fill[black] (1,0) circle[radius=0.02];
      \fill[black] (0,-1) circle[radius=0.02];
      \fill[black] (1,-1) circle[radius=0.02];
	\end{scope}
    \begin{scope}[xshift=2.5cm]
      \draw (0,0) to node[sloped] {\tikz{\draw[->](0,0)--(0.1,0);}} (0,-1);
      \draw (1,0) to node[sloped] {\tikz{\draw[->](0,0)--(0.1,0);}} (1,-1);
      \draw (0,0) to node[sloped] {\tikz{\draw[-](0,0)--(0.1,0);}} (1,0);
      \draw (0,-1) to node  {\tikz{\draw[-](0,0)--(0.1,0);}} (1,-1);
      \node at (1/2,-1/2) {$\del(l_1)h_1$};
	 \draw (0,0) to node {\tikz{\draw[-](0,0)--(0.1,0);}} (1,0);
      \draw (0,-1) to node {\tikz{\draw[-](0,0)--(0.1,0);}} (1,-1);
      \fill[black] (0,0) circle[radius=0.02];
      \fill[black] (1,0) circle[radius=0.02];
      \fill[black] (0,-1) circle[radius=0.02];
      \fill[black] (1,-1) circle[radius=0.02];
	\end{scope}
	\begin{scope}[xshift=2.5cm, yshift=-1.0cm]
      \draw (0,0) to node[sloped] {\tikz{\draw[->](0,0)--(0.1,0);}} (0,-1);
      \draw (1,0) to node[sloped] {\tikz{\draw[->](0,0)--(0.1,0);}} (1,-1);
      \draw (0,0) to node[sloped] {\tikz{\draw[-](0,0)--(0.1,0);}} (1,0);
      \draw (0,-1) to node  {\tikz{\draw[-](0,0)--(0.1,0);}} (1,-1);
      \node at (1/2,-1/2) {$h_2$};
      \draw (0,0) to node {\tikz{\draw[-](0,0)--(0.1,0);}}(1,0);
      \draw (0,-1) to node{\tikz{\draw[-](0,0)--(0.1,0);}} (1,-1);
      \fill[black] (0,0) circle[radius=0.02];
      \fill[black] (1,0) circle[radius=0.02];
      \fill[black] (0,-1) circle[radius=0.02];
      \fill[black] (1,-1) circle[radius=0.02];
	\end{scope}
	  \end{scope}
	 \begin{scope}[xshift=0cm, yshift=-2.2cm]
	 \draw[->] (1.1,-1) to (2.4,-1);
      \node at (1.75,-1) [above]{$\left({\rm id}_{h_2}  \#_1\del(m_1) l_1\right)$};
	 \begin{scope}[xshift=-0.3cm]
      \draw (0,0) to node[sloped] {\tikz{\draw[->](0,0)--(0.1,0);}} (0,-1);
      \draw (1,0) to node[sloped] {\tikz{\draw[->](0,0)--(0.1,0);}} (1,-1);
      \draw (0,0) to node[sloped] {\tikz{\draw[-](0,0)--(0.1,0);}} (1,0);
      \draw (0,-1) to node  {\tikz{\draw[-](0,0)--(0.1,0);}} (1,-1);
      \node at (1/2,-1/2) {$h_1$};
      \draw (0,0) to node {\tikz{\draw[-](0,0)--(0.1,0);}} (1,0);
      \draw (0,-1) to node {\tikz{\draw[-](0,0)--(0.1,0);}} (1,-1);
      \fill[black] (0,0) circle[radius=0.02];
      \fill[black] (1,0) circle[radius=0.02];
      \fill[black] (0,-1) circle[radius=0.02];
      \fill[black] (1,-1) circle[radius=0.02];
      \end{scope}
      \begin{scope}[xshift=-0.3cm,yshift=-1.0cm]
      \draw (0,0) to node[sloped] {\tikz{\draw[->](0,0)--(0.1,0);}} (0,-1);
      \draw (1,0) to node[sloped] {\tikz{\draw[->](0,0)--(0.1,0);}} (1,-1);
      \draw (0,0) to node[sloped] {\tikz{\draw[-](0,0)--(0.1,0);}} (1,0);
      \node at (1/2,-1/2) {$ h_2$};
      \draw (0,0) to node {\tikz{\draw[-](0,0)--(0.1,0);}} (1,0);
      \draw (0,-1) to node{\tikz{\draw[-](0,0)--(0.1,0);}} (1,-1);
      \fill[black] (0,0) circle[radius=0.02];
      \fill[black] (1,0) circle[radius=0.02];
      \fill[black] (0,-1) circle[radius=0.02];
      \fill[black] (1,-1) circle[radius=0.02];
	\end{scope}
    \begin{scope}[xshift=2.8cm]
      \draw (0,0) to node[sloped] {\tikz{\draw[->](0,0)--(0.1,0);}} (0,-1);
      \draw (1,0) to node[sloped] {\tikz{\draw[->](0,0)--(0.1,0);}} (1,-1);
      \draw (0,0) to node[sloped] {\tikz{\draw[-](0,0)--(0.1,0);}} (1,0);
      \draw (0,-1) to node  {\tikz{\draw[-](0,0)--(0.1,0);}} (1,-1);
      \node at (1/2,-1/2) {$\del(l_1)h_1$};
      \draw (0,0) to node {\tikz{\draw[-](0,0)--(0.1,0);}} (1,0);
      \draw (0,-1) to node {\tikz{\draw[-](0,0)--(0.1,0);}} (1,-1);
      \fill[black] (0,0) circle[radius=0.02];
      \fill[black] (1,0) circle[radius=0.02];
      \fill[black] (0,-1) circle[radius=0.02];
      \fill[black] (1,-1) circle[radius=0.02];
	\end{scope}
	\begin{scope}[xshift=2.8cm,yshift=-1.0cm]
      \draw (0,0) to node[sloped] {\tikz{\draw[->](0,0)--(0.1,0);}} (0,-1);
      \draw (1,0) to node[sloped] {\tikz{\draw[->](0,0)--(0.1,0);}} (1,-1);
      \draw (0,0) to node[sloped] {\tikz{\draw[-](0,0)--(0.1,0);}} (1,0);
      \draw (0,-1) to node  {\tikz{\draw[-](0,0)--(0.1,0);}} (1,-1);
      \node at (1/2,-1/2) {$h_2$};
      \draw (0,0) to node {\tikz{\draw[-](0,0)--(0.1,0);}} (1,0);
      \draw (0,-1) to node{\tikz{\draw[-](0,0)--(0.1,0);}} (1,-1);
      \fill[black] (0,0) circle[radius=0.02];
      \fill[black] (1,0) circle[radius=0.02];
      \fill[black] (0,-1) circle[radius=0.02];
      \fill[black] (1,-1) circle[radius=0.02];
	\end{scope}
	  \end{scope}
	  \draw [line width=1pt, double distance=3pt, arrows = {-Latex[length=0pt 3 0]}] (1.75,-1.1) -- (1.75,-2.8);
      \node at (1.75, -1.9)  [fill=white]{$ h_2 \#_2 m_1$};
  \end{tikzpicture}
\end{center}
\caption{Diagram of $h_2 \#_2 m_1$}
\label{d:h2_m1}
\end{figure}
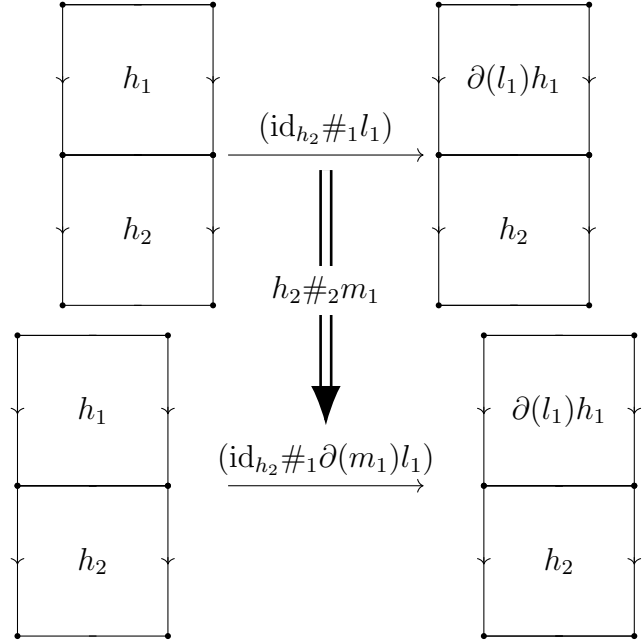

\begin{figure}
\begin{center}
  \begin{tikzpicture}[scale=2]
    \begin{scope}[xshift=0.0cm]
      \draw[->] (1.1,-1) to (2.4,-1);
      \node at (1.75,-1) [above]{$( l_2 \#_1  {\rm id}_{h_1} )$};
      \draw (0,0) to node[sloped] {\tikz{\draw[->](0,0)--(0.1,0);}} (0,-1);
      \draw (1,0) to node[sloped] {\tikz{\draw[->](0,0)--(0.1,0);}} (1,-1);
      \draw (0,0) to node[sloped] {\tikz{\draw[-](0,0)--(0.1,0);}} (1,0);
      \draw (0,-1) to node  {\tikz{\draw[-](0,0)--(0.1,0);}} (1,-1);
      \node at (1/2,-1/2) {$h_1$};
      \draw (0,0) to node {\tikz{\draw[-](0,0)--(0.1,0);}} (1,0);
      \draw (0,-1) to node {\tikz{\draw[-](0,0)--(0.1,0);}}(1,-1);
      \fill[black] (0,0) circle[radius=0.02];
      \fill[black] (1,0) circle[radius=0.02];
      \fill[black] (0,-1) circle[radius=0.02];
      \fill[black] (1,-1) circle[radius=0.02];
      \begin{scope}[yshift=-1.0cm]
      \draw (0,0) to node[sloped] {\tikz{\draw[->](0,0)--(0.1,0);}} (0,-1);
      \draw (1,0) to node[sloped] {\tikz{\draw[->](0,0)--(0.1,0);}} (1,-1);
      \draw (0,0) to node[sloped] {\tikz{\draw[-](0,0)--(0.1,0);}} (1,0);
      \node at (1/2,-1/2) {$h_2$};
      \draw (0,0) to node {\tikz{\draw[-](0,0)--(0.1,0);}} (1,0);
      \draw (0,-1) to node{\tikz{\draw[-](0,0)--(0.1,0);}} (1,-1);
      \fill[black] (0,0) circle[radius=0.02];
      \fill[black] (1,0) circle[radius=0.02];
      \fill[black] (0,-1) circle[radius=0.02];
      \fill[black] (1,-1) circle[radius=0.02];
	\end{scope}
    \begin{scope}[xshift=2.5cm]
      \draw (0,0) to node[sloped] {\tikz{\draw[->](0,0)--(0.1,0);}} (0,-1);
      \draw (1,0) to node[sloped] {\tikz{\draw[->](0,0)--(0.1,0);}} (1,-1);
      \draw (0,0) to node[sloped] {\tikz{\draw[-](0,0)--(0.1,0);}} (1,0);
      \draw (0,-1) to node  {\tikz{\draw[-](0,0)--(0.1,0);}} (1,-1);
      \node at (1/2,-1/2) {$h_1$};
	 \draw (0,0) to node {\tikz{\draw[-](0,0)--(0.1,0);}} (1,0);
      \draw (0,-1) to node {\tikz{\draw[-](0,0)--(0.1,0);}} (1,-1);
      \fill[black] (0,0) circle[radius=0.02];
      \fill[black] (1,0) circle[radius=0.02];
      \fill[black] (0,-1) circle[radius=0.02];
      \fill[black] (1,-1) circle[radius=0.02];
	\end{scope}
	\begin{scope}[xshift=2.5cm, yshift=-1.0cm]
      \draw (0,0) to node[sloped] {\tikz{\draw[->](0,0)--(0.1,0);}} (0,-1);
      \draw (1,0) to node[sloped] {\tikz{\draw[->](0,0)--(0.1,0);}} (1,-1);
      \draw (0,0) to node[sloped] {\tikz{\draw[-](0,0)--(0.1,0);}} (1,0);
      \draw (0,-1) to node  {\tikz{\draw[-](0,0)--(0.1,0);}} (1,-1);
      \node at (1/2,-1/2) {$\del(l_2)h_2$};
      \draw (0,0) to node {\tikz{\draw[-](0,0)--(0.1,0);}}(1,0);
      \draw (0,-1) to node{\tikz{\draw[-](0,0)--(0.1,0);}} (1,-1);
      \fill[black] (0,0) circle[radius=0.02];
      \fill[black] (1,0) circle[radius=0.02];
      \fill[black] (0,-1) circle[radius=0.02];
      \fill[black] (1,-1) circle[radius=0.02];
	\end{scope}
	  \end{scope}
	 \begin{scope}[xshift=0cm, yshift=-2.2cm]
	 \draw[->] (1.1,-1) to (2.4,-1);
      \node at (1.75,-1) [above]{$\left( (\del(m_2)l_2) \#_1  {\rm id}_{h_1}\right)$};
	 \begin{scope}[xshift=-0.3cm]
      \draw (0,0) to node[sloped] {\tikz{\draw[->](0,0)--(0.1,0);}} (0,-1);
      \draw (1,0) to node[sloped] {\tikz{\draw[->](0,0)--(0.1,0);}} (1,-1);
      \draw (0,0) to node[sloped] {\tikz{\draw[-](0,0)--(0.1,0);}} (1,0);
      \draw (0,-1) to node  {\tikz{\draw[-](0,0)--(0.1,0);}} (1,-1);
      \node at (1/2,-1/2) {$h_1$};
      \draw (0,0) to node {\tikz{\draw[-](0,0)--(0.1,0);}} (1,0);
      \draw (0,-1) to node {\tikz{\draw[-](0,0)--(0.1,0);}} (1,-1);
      \fill[black] (0,0) circle[radius=0.02];
      \fill[black] (1,0) circle[radius=0.02];
      \fill[black] (0,-1) circle[radius=0.02];
      \fill[black] (1,-1) circle[radius=0.02];
      \end{scope}
      \begin{scope}[xshift=-0.3cm,yshift=-1.0cm]
      \draw (0,0) to node[sloped] {\tikz{\draw[->](0,0)--(0.1,0);}} (0,-1);
      \draw (1,0) to node[sloped] {\tikz{\draw[->](0,0)--(0.1,0);}} (1,-1);
      \draw (0,0) to node[sloped] {\tikz{\draw[-](0,0)--(0.1,0);}} (1,0);
      \node at (1/2,-1/2) {$ h_2$};
      \draw (0,0) to node {\tikz{\draw[-](0,0)--(0.1,0);}} (1,0);
      \draw (0,-1) to node{\tikz{\draw[-](0,0)--(0.1,0);}} (1,-1);
      \fill[black] (0,0) circle[radius=0.02];
      \fill[black] (1,0) circle[radius=0.02];
      \fill[black] (0,-1) circle[radius=0.02];
      \fill[black] (1,-1) circle[radius=0.02];
	\end{scope}
    \begin{scope}[xshift=2.8cm]
      \draw (0,0) to node[sloped] {\tikz{\draw[->](0,0)--(0.1,0);}} (0,-1);
      \draw (1,0) to node[sloped] {\tikz{\draw[->](0,0)--(0.1,0);}} (1,-1);
      \draw (0,0) to node[sloped] {\tikz{\draw[-](0,0)--(0.1,0);}} (1,0);
      \draw (0,-1) to node  {\tikz{\draw[-](0,0)--(0.1,0);}} (1,-1);
      \node at (1/2,-1/2) {$h_1$};
      \draw (0,0) to node {\tikz{\draw[-](0,0)--(0.1,0);}} (1,0);
      \draw (0,-1) to node {\tikz{\draw[-](0,0)--(0.1,0);}} (1,-1);
      \fill[black] (0,0) circle[radius=0.02];
      \fill[black] (1,0) circle[radius=0.02];
      \fill[black] (0,-1) circle[radius=0.02];
      \fill[black] (1,-1) circle[radius=0.02];
	\end{scope}
	\begin{scope}[xshift=2.8cm,yshift=-1.0cm]
      \draw (0,0) to node[sloped] {\tikz{\draw[->](0,0)--(0.1,0);}} (0,-1);
      \draw (1,0) to node[sloped] {\tikz{\draw[->](0,0)--(0.1,0);}} (1,-1);
      \draw (0,0) to node[sloped] {\tikz{\draw[-](0,0)--(0.1,0);}} (1,0);
      \draw (0,-1) to node  {\tikz{\draw[-](0,0)--(0.1,0);}} (1,-1);
      \node at (1/2,-1/2) {$\del(l_2)h_2$};
      \draw (0,0) to node {\tikz{\draw[-](0,0)--(0.1,0);}} (1,0);
      \draw (0,-1) to node{\tikz{\draw[-](0,0)--(0.1,0);}} (1,-1);
      \fill[black] (0,0) circle[radius=0.02];
      \fill[black] (1,0) circle[radius=0.02];
      \fill[black] (0,-1) circle[radius=0.02];
      \fill[black] (1,-1) circle[radius=0.02];
	\end{scope}
	  \end{scope}
	  \draw [line width=1pt, double distance=3pt, arrows = {-Latex[length=0pt 3 0]}] (1.75,-1.1) -- (1.75,-2.8);
      \node at (1.75, -1.9)  [fill=white]{$m_2\#_2 h_1$};
  \end{tikzpicture}
\end{center}
\caption{Diagram of $m_2 \#_2 h_1$}
\label{d:m2_h1}
\end{figure}
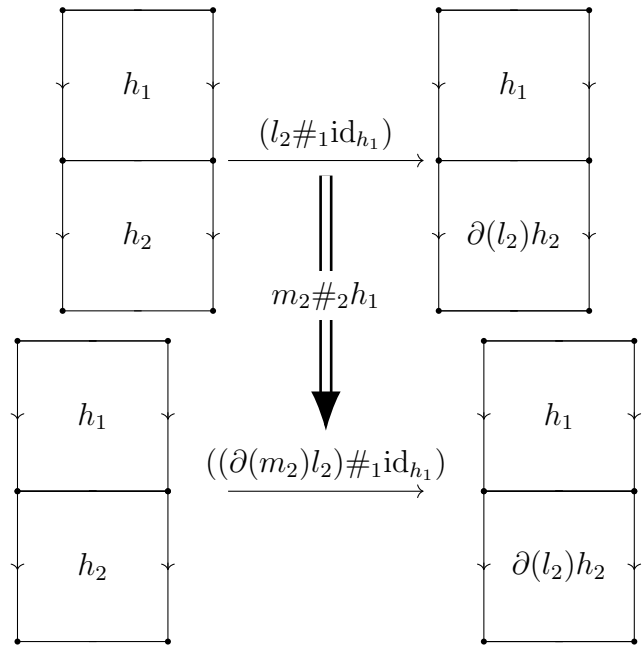

For each $h\in H$, $l_2, l_1 \in L$, and $m_1, m_2 \in M$, we define
\[
l_2 \#_3 m_1 \coloneqq {\rm id}_{l_2} \#_3 m_1, \qquad m_2 \#_2 l_1 \coloneqq  m_2 \#_2 {\rm id}_{l_1},
\]
 which are represented in Figures~\ref{d:l2_m1} and~\ref{d:m2_l1}, respectively.

\begin{figure}
\begin{center}
  \begin{tikzpicture}[scale=2]
    \begin{scope}[xshift=0.0cm]
      \draw[->] (1.1,-1/2) to (2.4,-1/2);
      \node at (1.75,-1/2) [above]{$l_1$};
      \draw (0,0) to node[sloped] {\tikz{\draw[->](0,0)--(0.1,0);}} (0,-1);
      \draw (1,0) to node[sloped] {\tikz{\draw[->](0,0)--(0.1,0);}} (1,-1);
      \draw (0,0) to node[sloped] {\tikz{\draw[-](0,0)--(0.1,0);}} (1,0);
      \draw (0,-1) to node  {\tikz{\draw[-](0,0)--(0.1,0);}} (1,-1);
      \node at (1/2,-1/2) {$h$};
      \draw (0,0) to node {\tikz{\draw[-](0,0)--(0.1,0);}} (1,0);
      \draw (0,-1) to node {\tikz{\draw[-](0,0)--(0.1,0);}}(1,-1);
      \fill[black] (0,0) circle[radius=0.02];
      \fill[black] (1,0) circle[radius=0.02];
      \fill[black] (0,-1) circle[radius=0.02];
      \fill[black] (1,-1) circle[radius=0.02];   
	  \end{scope}
	  \begin{scope}[xshift=2.5cm]
      \draw[->] (1.1,-1/2) to (2.4,-1/2);
      \node at (1.75,-1/2) [above]{$l_2$};
      \draw (0,0) to node[sloped] {\tikz{\draw[->](0,0)--(0.1,0);}} (0,-1);
      \draw (1,0) to node[sloped] {\tikz{\draw[->](0,0)--(0.1,0);}} (1,-1);
      \draw (0,0) to node[sloped] {\tikz{\draw[-](0,0)--(0.1,0);}} (1,0);
      \draw (0,-1) to node  {\tikz{\draw[-](0,0)--(0.1,0);}} (1,-1);
      \node at (1/2,-1/2) {$\del(l_1)h$};
      \draw (0,0) to node {\tikz{\draw[-](0,0)--(0.1,0);}} (1,0);
      \draw (0,-1) to node {\tikz{\draw[-](0,0)--(0.1,0);}}(1,-1);
      \fill[black] (0,0) circle[radius=0.02];
      \fill[black] (1,0) circle[radius=0.02];
      \fill[black] (0,-1) circle[radius=0.02];
      \fill[black] (1,-1) circle[radius=0.02];   
	  \end{scope}
	    \begin{scope}[xshift=5.0cm]
      \draw (0,0) to node[sloped] {\tikz{\draw[->](0,0)--(0.1,0);}} (0,-1);
      \draw (1,0) to node[sloped] {\tikz{\draw[->](0,0)--(0.1,0);}} (1,-1);
      \draw (0,0) to node[sloped] {\tikz{\draw[-](0,0)--(0.1,0);}} (1,0);
      \draw (0,-1) to node  {\tikz{\draw[-](0,0)--(0.1,0);}} (1,-1);
      \node at (1/2,-1/2) {$\del(l_2 l_1)h$};
      \draw (0,0) to node {\tikz{\draw[-](0,0)--(0.1,0);}} (1,0);
      \draw (0,-1) to node {\tikz{\draw[-](0,0)--(0.1,0);}}(1,-1);
      \fill[black] (0,0) circle[radius=0.02];
      \fill[black] (1,0) circle[radius=0.02];
      \fill[black] (0,-1) circle[radius=0.02];
      \fill[black] (1,-1) circle[radius=0.02];   
	  \end{scope}
	 \begin{scope}[xshift=0cm, yshift=-2.2cm]
	 \begin{scope}[xshift=0.0cm]
      \draw[->] (1.1,-1/2) to (2.4,-1/2);
      \node at (1.75,-1/2) [above]{$\del(m_1)l_1$};
      \draw (0,0) to node[sloped] {\tikz{\draw[->](0,0)--(0.1,0);}} (0,-1);
      \draw (1,0) to node[sloped] {\tikz{\draw[->](0,0)--(0.1,0);}} (1,-1);
      \draw (0,0) to node[sloped] {\tikz{\draw[-](0,0)--(0.1,0);}} (1,0);
      \draw (0,-1) to node  {\tikz{\draw[-](0,0)--(0.1,0);}} (1,-1);
      \node at (1/2,-1/2) {$h_1$};
      \draw (0,0) to node {\tikz{\draw[-](0,0)--(0.1,0);}} (1,0);
      \draw (0,-1) to node {\tikz{\draw[-](0,0)--(0.1,0);}}(1,-1);
      \fill[black] (0,0) circle[radius=0.02];
      \fill[black] (1,0) circle[radius=0.02];
      \fill[black] (0,-1) circle[radius=0.02];
      \fill[black] (1,-1) circle[radius=0.02];   
	  \end{scope}
	  \begin{scope}[xshift=2.5cm]
      \draw[->] (1.1,-1/2) to (2.4,-1/2);
      \node at (1.75,-1/2) [above]{$l_2$};
      \draw (0,0) to node[sloped] {\tikz{\draw[->](0,0)--(0.1,0);}} (0,-1);
      \draw (1,0) to node[sloped] {\tikz{\draw[->](0,0)--(0.1,0);}} (1,-1);
      \draw (0,0) to node[sloped] {\tikz{\draw[-](0,0)--(0.1,0);}} (1,0);
      \draw (0,-1) to node  {\tikz{\draw[-](0,0)--(0.1,0);}} (1,-1);
      \node at (1/2,-1/2) {$\del(l_1)h_1$};
      \draw (0,0) to node {\tikz{\draw[-](0,0)--(0.1,0);}} (1,0);
      \draw (0,-1) to node {\tikz{\draw[-](0,0)--(0.1,0);}}(1,-1);
      \fill[black] (0,0) circle[radius=0.02];
      \fill[black] (1,0) circle[radius=0.02];
      \fill[black] (0,-1) circle[radius=0.02];
      \fill[black] (1,-1) circle[radius=0.02];   
	  \end{scope}
	    \begin{scope}[xshift=5.0cm]
      \draw (0,0) to node[sloped] {\tikz{\draw[->](0,0)--(0.1,0);}} (0,-1);
      \draw (1,0) to node[sloped] {\tikz{\draw[->](0,0)--(0.1,0);}} (1,-1);
      \draw (0,0) to node[sloped] {\tikz{\draw[-](0,0)--(0.1,0);}} (1,0);
      \draw (0,-1) to node  {\tikz{\draw[-](0,0)--(0.1,0);}} (1,-1);
      \node at (1/2,-1/2) {$\del(l_2 l_1)h_1$};
      \draw (0,0) to node {\tikz{\draw[-](0,0)--(0.1,0);}} (1,0);
      \draw (0,-1) to node {\tikz{\draw[-](0,0)--(0.1,0);}}(1,-1);
      \fill[black] (0,0) circle[radius=0.02];
      \fill[black] (1,0) circle[radius=0.02];
      \fill[black] (0,-1) circle[radius=0.02];
      \fill[black] (1,-1) circle[radius=0.02];   
	  \end{scope}
	  \end{scope}
	  \draw [line width=1pt, double distance=3pt, arrows = {-Latex[length=0pt 3 0]}] (3.0,-1.1) -- (3.0,-2.2);
      \node at (3.0, -1.5)  [fill=white]{${l_2} \#_3 m_1$};
  \end{tikzpicture}
\end{center}
\caption{Diagram of $l_2 \#_3 m_1$}
\label{d:l2_m1}
\end{figure}
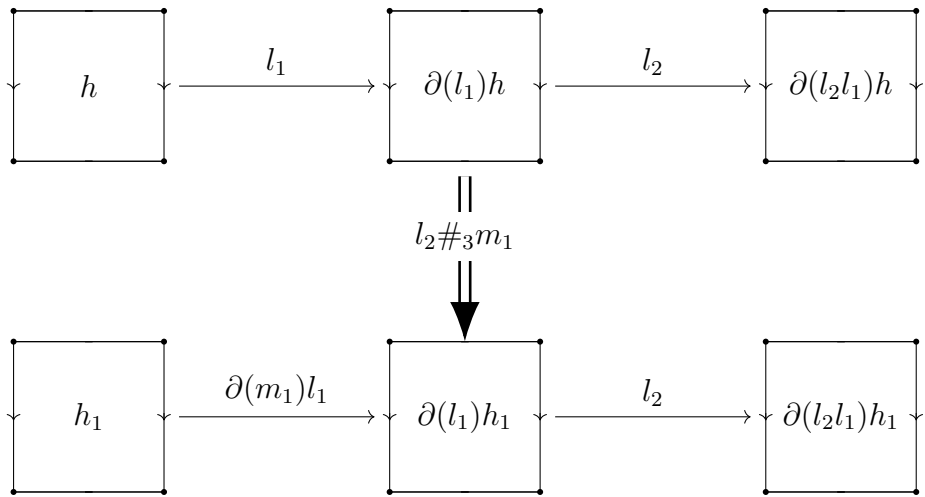

\begin{figure}
\begin{center}
  \begin{tikzpicture}[scale=2]
    \begin{scope}[xshift=0.0cm]
      \draw[->] (1.1,-1/2) to (2.4,-1/2);
      \node at (1.75,-1/2) [above]{$l_1$};
      \draw (0,0) to node[sloped] {\tikz{\draw[->](0,0)--(0.1,0);}} (0,-1);
      \draw (1,0) to node[sloped] {\tikz{\draw[->](0,0)--(0.1,0);}} (1,-1);
      \draw (0,0) to node[sloped] {\tikz{\draw[-](0,0)--(0.1,0);}} (1,0);
      \draw (0,-1) to node  {\tikz{\draw[-](0,0)--(0.1,0);}} (1,-1);
      \node at (1/2,-1/2) {$h$};
      \draw (0,0) to node {\tikz{\draw[-](0,0)--(0.1,0);}} (1,0);
      \draw (0,-1) to node {\tikz{\draw[-](0,0)--(0.1,0);}}(1,-1);
      \fill[black] (0,0) circle[radius=0.02];
      \fill[black] (1,0) circle[radius=0.02];
      \fill[black] (0,-1) circle[radius=0.02];
      \fill[black] (1,-1) circle[radius=0.02];   
	  \end{scope}
	  \begin{scope}[xshift=2.5cm]
      \draw[->] (1.1,-1/2) to (2.4,-1/2);
      \node at (1.75,-1/2) [above]{$l_2$};
      \draw (0,0) to node[sloped] {\tikz{\draw[->](0,0)--(0.1,0);}} (0,-1);
      \draw (1,0) to node[sloped] {\tikz{\draw[->](0,0)--(0.1,0);}} (1,-1);
      \draw (0,0) to node[sloped] {\tikz{\draw[-](0,0)--(0.1,0);}} (1,0);
      \draw (0,-1) to node  {\tikz{\draw[-](0,0)--(0.1,0);}} (1,-1);
      \node at (1/2,-1/2) {$\del(l_1)h$};
      \draw (0,0) to node {\tikz{\draw[-](0,0)--(0.1,0);}} (1,0);
      \draw (0,-1) to node {\tikz{\draw[-](0,0)--(0.1,0);}}(1,-1);
      \fill[black] (0,0) circle[radius=0.02];
      \fill[black] (1,0) circle[radius=0.02];
      \fill[black] (0,-1) circle[radius=0.02];
      \fill[black] (1,-1) circle[radius=0.02];   
	  \end{scope}
	    \begin{scope}[xshift=5.0cm]
      \draw (0,0) to node[sloped] {\tikz{\draw[->](0,0)--(0.1,0);}} (0,-1);
      \draw (1,0) to node[sloped] {\tikz{\draw[->](0,0)--(0.1,0);}} (1,-1);
      \draw (0,0) to node[sloped] {\tikz{\draw[-](0,0)--(0.1,0);}} (1,0);
      \draw (0,-1) to node  {\tikz{\draw[-](0,0)--(0.1,0);}} (1,-1);
      \node at (1/2,-1/2) {$\del(l_2 l_1)h$};
      \draw (0,0) to node {\tikz{\draw[-](0,0)--(0.1,0);}} (1,0);
      \draw (0,-1) to node {\tikz{\draw[-](0,0)--(0.1,0);}}(1,-1);
      \fill[black] (0,0) circle[radius=0.02];
      \fill[black] (1,0) circle[radius=0.02];
      \fill[black] (0,-1) circle[radius=0.02];
      \fill[black] (1,-1) circle[radius=0.02];   
	  \end{scope}
	 \begin{scope}[xshift=0cm, yshift=-2.2cm]
	 \begin{scope}[xshift=0.0cm]
      \draw[->] (1.1,-1/2) to (2.4,-1/2);
      \node at (1.75,-1/2) [above]{$l_1$};
      \draw (0,0) to node[sloped] {\tikz{\draw[->](0,0)--(0.1,0);}} (0,-1);
      \draw (1,0) to node[sloped] {\tikz{\draw[->](0,0)--(0.1,0);}} (1,-1);
      \draw (0,0) to node[sloped] {\tikz{\draw[-](0,0)--(0.1,0);}} (1,0);
      \draw (0,-1) to node  {\tikz{\draw[-](0,0)--(0.1,0);}} (1,-1);
      \node at (1/2,-1/2) {$h_1$};
      \draw (0,0) to node {\tikz{\draw[-](0,0)--(0.1,0);}} (1,0);
      \draw (0,-1) to node {\tikz{\draw[-](0,0)--(0.1,0);}}(1,-1);
      \fill[black] (0,0) circle[radius=0.02];
      \fill[black] (1,0) circle[radius=0.02];
      \fill[black] (0,-1) circle[radius=0.02];
      \fill[black] (1,-1) circle[radius=0.02];   
	  \end{scope}
	  \begin{scope}[xshift=2.5cm]
      \draw[->] (1.1,-1/2) to (2.4,-1/2);
      \node at (1.75,-1/2) [above]{$\del(m_2)l_2$};
      \draw (0,0) to node[sloped] {\tikz{\draw[->](0,0)--(0.1,0);}} (0,-1);
      \draw (1,0) to node[sloped] {\tikz{\draw[->](0,0)--(0.1,0);}} (1,-1);
      \draw (0,0) to node[sloped] {\tikz{\draw[-](0,0)--(0.1,0);}} (1,0);
      \draw (0,-1) to node  {\tikz{\draw[-](0,0)--(0.1,0);}} (1,-1);
      \node at (1/2,-1/2) {$\del(l_1)h_1$};
      \draw (0,0) to node {\tikz{\draw[-](0,0)--(0.1,0);}} (1,0);
      \draw (0,-1) to node {\tikz{\draw[-](0,0)--(0.1,0);}}(1,-1);
      \fill[black] (0,0) circle[radius=0.02];
      \fill[black] (1,0) circle[radius=0.02];
      \fill[black] (0,-1) circle[radius=0.02];
      \fill[black] (1,-1) circle[radius=0.02];   
	  \end{scope}
	    \begin{scope}[xshift=5.0cm]
      \draw (0,0) to node[sloped] {\tikz{\draw[->](0,0)--(0.1,0);}} (0,-1);
      \draw (1,0) to node[sloped] {\tikz{\draw[->](0,0)--(0.1,0);}} (1,-1);
      \draw (0,0) to node[sloped] {\tikz{\draw[-](0,0)--(0.1,0);}} (1,0);
      \draw (0,-1) to node  {\tikz{\draw[-](0,0)--(0.1,0);}} (1,-1);
      \node at (1/2,-1/2) {$\del(l_2 l_1)h_1$};
      \draw (0,0) to node {\tikz{\draw[-](0,0)--(0.1,0);}} (1,0);
      \draw (0,-1) to node {\tikz{\draw[-](0,0)--(0.1,0);}}(1,-1);
      \fill[black] (0,0) circle[radius=0.02];
      \fill[black] (1,0) circle[radius=0.02];
      \fill[black] (0,-1) circle[radius=0.02];
      \fill[black] (1,-1) circle[radius=0.02];   
	  \end{scope}
	  \end{scope}
	  \draw [line width=1pt, double distance=3pt, arrows = {-Latex[length=0pt 3 0]}] (3.0,-1.1) -- (3.0,-2.2);
      \node at (3.0, -1.5)  [fill=white]{$ m_2 \#_3 {l_1} $};
  \end{tikzpicture}
\end{center}
\caption{Diagram of $m_2 \#_3 l_1$}
\label{d:m2_l1}
\end{figure}

The LL-Peiffer lifting is a one-dimensional higher analogue of the ordinary Peiffer lifting. For each $l_1,l_2 \in L$, we have the following relation:
\begin{align*}
\del \{l_2 , l_1 \}_{LL} \act{\del l_2}l_1 l_2 = l_2 l_1. \\
\end{align*}
Using 
\[
\act{\del l_2}l_1 l_2 = \left({\rm id}_{\del l_2} \#_2 l_1\right) \#_3 \left(l_2 \#_2 {\rm id}_{e_H}\right), \qquad l_2 l_1 = \left(l_2 \#_2 {\rm id}_{\del l_1}\right) \#_3 \left({\rm id}_{e_H} \#_2 l_1\right),
\]
we can describe $\{l_2 , l_1 \}_{LL}$ as shown in Figure~\ref{d:LL_lift}.

\begin{figure}
\begin{center}
  % [inline block 1: 2 envs, 58278 chars -> data_tex | \begin{tikzpicture}[scale=2]     \begin{scope}[xshift=0.0cm]...]

  }
\end{center}
\caption{Property \ref{lhmnr} of 3-crossed module}
\label{d:rel_of_lhmn}
\end{figure}

Each label form $L_A$ to $L_G$ and from $M_A$ to $M_D$ can be written as follows:
\begin{equation*}
    \begin{aligned}
L_A &= \act{h_4h_3}\{h_2, h_1\}, \ 
L_B = \act{h_4}\{h_3,\act{\del h_2} h_1 \} \\
L_C &= \{h_4, \act{\del(h_3h_2)}h_1 \}, \ 
L_D=\act{h_4}\{h_3h_2, h_1\} \\
L_E &=\{h_4h_3,h_1\}, \ 
L_F =\{h_4h_3h_2,h_1\}\\
M_A &= \act{h_4}\{h_3, h_2, h_1\}, \ 
M_B =\{h_4, h_3 h_2, h_1\}\\
M_C &= \{h_4h_3,\act{\del h_2}h_1\}, \ 
M_D = \{h_4h_3, h_2, h_1\}.
\end{aligned}
\end{equation*}

Hence, Property~\ref{lhmnr} can be expressed as
\[
M_D \act{L_A}M_C = M_B M_A
\]
which means that the two possible compositions corresponding to the left-Homanian are identical.

Property~\ref{rhmnr} of a 3-crossed module can be obtained in a similar manner.  This property can be represented by Figure~\ref{d:rel_of_rhmn}.

\begin{figure}[htbp]
\begin{center}
\scalebox{0.8}{
% [inline block 2: 1 envs, 48376 chars -> data_tex | \begin{tikzpicture}[scale=0.82]   \begin{scope}[xshift=0.0cm]...]

  }
\end{center}
\caption{Proprety of \ref{rhmnr} of 3-crossed module}
\label{d:rel_of_rhmn}
\end{figure}

In Figure~\ref{d:rel_of_rhmn}, each label form $L_A$ to $L_G$ and from $M_A$ to $M_D$ can be written as in equation~(\ref{eq:drhmnr}).

\begin{equation}
\label{eq:drhmnr}
    \begin{aligned}
L_A &= \act{}\{h_4, h_3\}, \ 
L_B = \act{(\act{\del h_4}h_3)}\{h_4, h_2\} \\
L_C &= \act{\left(\act{\del h_4}(h_3h_2)\right)}\{h_4,h_1\}, \ 
L_D=\act{}\{h_4,h_3, h_2\} \\
L_E &=\act{(\act{\del h_4}h_3)}\{h_4,h_2h_1\}, \ 
L_F =\{h_4,h_3h_2h_1\}\\
M_A &= \act{}\{h_4, h_3, h_2\}', \ 
M_B =\{h_4, h_3 h_2, h_1\}'\\
M_C &= \act{(\act{\del h_4}h_3)}\{h_4,h_2,h_1\}', \ 
M_D = \{h_4,h_3, h_2h_1\}'
\end{aligned}
\end{equation}

This shows that Property~\ref{rhmnr} of a 3-crossed module can be expressed as 
\[
M_B M_A = M_D \act{L_A} M_C.
\]
By examining Figure~\ref{d:rel_of_rhmn}, we see that this property states that the two ways of applying the right-Homanian coincide.

Property~\ref{lrhmnr} of a 3-crossed module can be represented by Figure~\ref{dlrhmnr}.
\begin{figure}[htbp]
\begin{center}
\scalebox{0.7}{
% [inline block 3: 1 envs, 73299 chars -> data_tex | \begin{tikzpicture}[scale=0.82] %\begin{tikzpicture}[scale=0.82, rotate=90]...]

  }
\end{center}
\caption{Proprety of \ref{lrhmnr} of 3-crossed module}
\label{dlrhmnr}
\end{figure}

Each label from $L_A$ to $L_K$ and from $M_A$ to $M_G$ in Figure~\ref{dlrhmnr} corresponds to an element of $L$ or $M$. Here, we explicitly describe only those labels that are used in Property~\ref{lrhmnr}. Some of the labels in Figure~\ref{dlrhmnr} can be written as in equation~(\ref{eq:dlrhmnr}).
\begin{equation}
\label{eq:dlrhmnr}
    \begin{aligned}
L_A &= \act{h_4}\{h_3, h_2\}, \ 
L_G = \act{h_4}\{h_3, h_2h_1\} \\
L_K &= \act{}\{h_4h_3,h_2\} \\ 
M_A &= \act{}\{h_4, \act{\del h_3}h_2, \act{\del h_3}h_1\}', \ 
M_B =\act{h_4}\{h_3, h_2, h_1\}'\\
M_C &= \act{}\{h_4,h_3.h_2h_1\}, \ 
M_D = \{ \{h_4,\act{\del h_3}h_2\}, \act{\act{\del(h_4h_3)}h_2h_4}\{h_3,h_1\}\}_{LL}\\
M_E &=\act{\act{\del(h_4h_3)}h_2}\{h_4,h_3,h_1\}, \ 
M_F =\{h_4, h_3,h_2\}\\
M_G&=\{h_4h_3, h_2,h_1\}
\end{aligned}
\end{equation}
Figure~\ref{dlrhmnr} shows that Property~\ref{lrhmnr} of a 3-crossed module can be expressed as 
\[
M_C \act{L_G}M_A M_B = M_G \act{L_K}M_E M_F \act{L_A}M_D.
\]

Property~\ref{yanBt} of a 3-crossed module can be represented by Figure~\ref{d:yanBt}.

\begin{figure}[htbp]
\begin{center}
\scalebox{0.8}{
% [inline block 4: 1 envs, 83004 chars -> data_tex | \begin{tikzpicture}[scale=0.82]   \begin{scope}[xshift=0.0cm]...]

  }
\end{center}
\caption{Diagram of proprety of \ref{yanBt} of 3-crossed module}
\label{d:yanBt}
\end{figure}

Each label from $L_A$ to $L_D$ and from $M_A$ to $M_F$ in Figure~\ref{d:yanBt} corresponds to an element of $L$ or $M$. Again, we explicitly describe only those labels that appear in Property~\ref{yanBt}. Some of the labels in Figure~\ref{d:yanBt} can be written as in equation~(\ref{eq:yanBt}).
\begin{equation}
\label{eq:yanBt}
    \begin{aligned}
L_A &= \{h_3,h_2\}, \ 
L_D = \act{h_3}\{h_2,h_1\} \\
M_A &= \{h_3,h_2,h_1\}' , \\
M_B &=\act{(\act{h_3}\{h_2,h_1\})}\{h_3,\partial\{h_2,h_1\}^{-1},h_2h_1\}'\{h_3,\{h_2,h_1\}\}' \{\act{\partial h_3}\{h_2,h_1\},\act{\partial\act{\partial h_3}\{h_2,h_1\}^{-1}}\{h_3,h_2h_1\}\} \\
M_C &= \{h_3,\act{\partial h_2}h_1,h_2\}'^{-1}, \ 
M_D = \{\act{\partial h_3}h_2,h_3,h_1\}\\
M_E &=\act{\{h_3h_2,h_1\}}\{\act{\partial(h_3h_2)}h_1,\{h_3,h_2\}\}^{-1}\\
&\ \ \ \ \ \ \times\{\{h_3,h_2\},\act{\partial\{h_3,h_2\}^{-1}}\{h_3h_2,h_1\}\}^{-1}\act{\{h_3,h_2\}}\{\partial\{h_3,h_2\}^{-1},h_3h_2,h_1\}^{-1}, \\ 
M_F &=\{h_3,h_2,h_1\}^{-1}
\end{aligned}
\end{equation}
Figure~\ref{d:yanBt} shows that property \ref{yanBt} of 3-crossed module can be write as $\act{L_D}M_C M_B M_A = M_F M_E \act{L_A}M_D$.

\section{Cocycle condition}
\label{cocycle}

In this section, we introduce the meaning of formulas~(\ref{r2-simplex m})-(\ref{r5-simplex m}). Throughout this section, we use the lattice-type representation.

First, we illustrate formula~(\ref{r2-simplex m}) by a diagram. For clarity, we consider the specific case $0<1<2<3<4<5$ instead of the general indices $i<j<k<m<p<q$.
 The corresponding diagram is shown in Figure~\ref{d:h012}.
 \begin{figure}
\begin{center}
  % [inline block 5: 6 envs, 88881 chars -> data_tex | \begin{tikzpicture}[scale=2]        \begin{scope}[xshift=0.0cm, yshift=0.0cm]...]

\end{center}

\end{appendices}

\newpage

\bibliographystyle{plain}
\nocite{*} 
\bibliography{refs}

\end{document}